\documentclass[12pt]{amsart}
\usepackage{graphics,color,epsfig}
\usepackage{amsfonts,amssymb,wasysym}

\newcommand{\Q}{\mathbb Q}
\newcommand{\R}{\mathbb R}
\newcommand{\Z}{\mathbb Z}

\newcommand{\ba}{\mathbf{a}}
\newcommand{\bm}{\mathbf{m}}
\newcommand{\bp}{\mathbf{p}}

\newcommand{\bx}{\mathbf{x}}

\newcommand{\cB}{\mathcal B}
\newcommand{\cC}{\mathcal C}
\newcommand{\cE}{\mathcal E}
\newcommand{\cL}{\mathcal L}
\newcommand{\cN}{\mathcal N}
\newcommand{\cU}{\mathcal U}
\newcommand{\cV}{\mathcal V}

\DeclareMathOperator{\conv}{conv}
\DeclareMathOperator{\diag}{diag}
\DeclareMathOperator{\diam}{diam}
\DeclareMathOperator{\dist}{dist}
\DeclareMathOperator{\hor}{hor}
\DeclareMathOperator{\BA}{BA}

\DeclareMathOperator{\DI}{DI}

\DeclareMathOperator{\Hdim}{H.\!dim}
\DeclareMathOperator{\Sing}{Sing}
\DeclareMathOperator{\Span}{span}

\DeclareMathOperator{\SL}{SL}
\DeclareMathOperator{\vol}{vol}

\newcommand{\eps}{\varepsilon}

\newcommand{\zip}{\varnothing}
\newcommand{\ls}{\preceq}
\newcommand{\gs}{\succeq}
\newcommand{\Wedge}{\wedge}


\newtheorem{theorem}{Theorem}[section]

\newtheorem{corollary}[theorem]{Corollary}
\newtheorem{lemma}[theorem]{Lemma}
\newtheorem{proposition}[theorem]{Proposition}
\newtheorem*{conj}{Conjecture}

\theoremstyle{definition}
\newtheorem{definition}[theorem]{Definition}

\newtheorem*{notation}{Notation}

\theoremstyle{remark}

\newtheorem{remark}[theorem]{Remark}

\title{Hausdorff dimension of the set of singular pairs}
\author{Yitwah Cheung}
\address{San Francisco State University \\ 
San Francisco, CA 94132, U.S.A.}
\email{cheung@math.sfsu.edu}
\date{\today}
\thanks{This research is partially supported by NSF grant DMS 0701281 and 
  a Spring 2007 Mini-Grant Award from San Francisco State University.}  
\subjclass{37A17, 11K40, 22E40, 11J70} 
\keywords{singular vectors, best approximations, divergent trajectories, 
  multi-dimensional continued fractions, self-similar coverings}

\begin{document}
\begin{abstract}
In this paper we show that the Hausdorff dimension of the set of singular 
  pairs is $\tfrac{4}{3}$.  
We also show that the action of $\diag(e^t,e^t,e^{-2t})$ on $\SL_3\R/\SL_3\Z$ 
  admits divergent trajectories that exit to infinity at arbitrarily slow 
  prescribed rates, answering a question of A.N.~Starkov.  
As a by-product of the analysis, we obtain a higher dimensional generalisation 
  of the basic inequalities satisfied by convergents of continued fractions.  
As an illustration of the techniques used to compute Hausdorff dimension, 
  we show that the set of real numbers with divergent partial quotients has 
  Hausdorff dimension $\tfrac12$.  
\end{abstract}
\maketitle

\section{Introduction}
Let $\Sing(d)$ denote the set of all singular vectors in $\R^d$.  
Recall that $\bx\in\R^d$ is said to be \emph{singular} if 
  for every $\delta>0$ there exists $T_0$ such that for all $T>T_0$ 
  the system of inequalities 
\begin{equation}\label{ieq:sing}
  \|q\bx-\bp\|<\frac{\delta}{T^{1/d}}\quad\text{and}\quad 0<q<T 
\end{equation}
  admits an integer solution $(\bp,q)\in\Z^{d+1}$.  
Since $\Sing(d)$ contains every rational hyperplane in $\R^d$, 
  its Hausdorff dimension is between $d-1$ and $d$.  
In this paper, we prove 
\begin{theorem}\label{thm:main}
  The Hausdorff dimension of $\Sing(2)$ is $\frac{4}{3}$.  
\end{theorem}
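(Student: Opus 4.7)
The plan is to translate singularity of $\bx \in \R^2$ into a combinatorial decay condition on its sequence of best approximations, and then estimate the Hausdorff dimension via a self-similar covering argument. Associate to each \emph{totally irrational} $\bx$ the sequence $(\bp_n,q_n) \in \Z^3$ of best approximation vectors, and set $\bm_n := \bp_n - q_n\bx$ and $\epsilon_n := \|\bm_n\|$. A Dirichlet-type transfer argument should show that $\bx \in \Sing(2)$ if and only if $q_{n+1}\epsilon_n^2 \to 0$. Since $\Sing(2)$ contains every rational line (Hausdorff dimension $1 < 4/3$), it suffices to prove the theorem on the totally irrational part of $\Sing(2)$.

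The next step is to establish two-sided ``basic inequalities'' for three consecutive best approximations $(\bp_{n-1},q_{n-1}),(\bp_n,q_n),(\bp_{n+1},q_{n+1})$, the higher-dimensional analogue of the continued-fraction recursion $q_{n+1}=a_n q_n + q_{n-1}$. Combining unimodularity of the integer $3\times 3$ matrix formed by these triples with the extremal property of best approximations, one should bound $q_{n+1}$ from above and below in terms of $\epsilon_n$ and the area of the parallelogram spanned by $\bm_{n-1}$ and $\bm_n$. These inequalities pin down the combinatorial ``type'' of admissible successors and control all subsequent counts.

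For the upper bound $\Hdim\Sing(2) \le 4/3$, cover the totally irrational singular set by parallelogram cells $C_n(\sigma)$ indexed by admissible configurations $\sigma$ of the first $n$ best approximations; each $C_n(\sigma)$ has scales controlled by $\epsilon_n$ and $1/q_n$, and contains of order $q_{n+1}/q_n$ sub-cells corresponding to the basic-inequality-admissible choices of $(\bp_{n+1},q_{n+1})$. Plugging the singular condition $q_{n+1}\epsilon_n^2 \to 0$ into the $s$-Hausdorff content sum over the $C_n(\sigma)$ and optimising, one sees the content vanishes once $s > 4/3$. For the matching lower bound, construct a Cantor-like subset of $\Sing(2)$ by selecting, at each level, a controlled fraction of the admissible successors; tune the growth rates of $q_n$ and $\epsilon_n$ so that $q_{n+1}\epsilon_n^2 \to 0$ is forced along every branch, then apply the mass distribution principle with the natural Bernoulli measure $\mu$ to obtain $\mu(B(\bx,r)) \lesssim r^{4/3}$, hence $\Hdim \ge 4/3$.

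The main obstacle is the two-dimensional combinatorics underlying step 2: unlike the classical $d=1$ case, best approximations in $\R^2$ are not linearly ordered by a single recursion, and the admissible successors of a given pair $((\bp_{n-1},q_{n-1}),(\bp_n,q_n))$ form a non-trivial two-parameter family of lattice choices. Establishing a count of admissible successors of order exactly $q_{n+1}/q_n$ with uniform constants, and matching the packing in the lower-bound Cantor construction against the covering in the upper bound so that both extremise at the same exponent $4/3$, is where the technical heart of the argument will lie.
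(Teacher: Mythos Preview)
Your high-level outline matches the paper's strategy, but two of your structural assumptions fail in a way that would block the upper bound.

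First, the claim that three consecutive best approximations $(\bp_{n-1},q_{n-1}),(\bp_n,q_n),(\bp_{n+1},q_{n+1})$ form a unimodular $3\times 3$ matrix is false in general. What one can prove (Lemma~\ref{lem:consec}) is only that \emph{two} consecutive best approximations span a primitive $2$-plane in $\Z^3$. Three consecutive ones may very well lie in that same plane, and in fact arbitrarily long runs of consecutive best approximations can be coplanar. This wrecks the ``basic inequalities'' step as you state it: there is no single recursion analogous to $q_{n+1}=a_nq_n+q_{n-1}$ with $q_{n+1}/q_n$ controlling the branching.

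Second, and relatedly, the naive self-similar covering by the cells $\Delta(v_n)$ indexed by the full sequence of best approximations gives $s=\infty$, not $4/3$: during a coplanar run the ratios $\diam\Delta(v_{n+1})/\diam\Delta(v_n)$ are not bounded away from $1$, so the covering sum in (\ref{def:s}) diverges for every $s$. The paper's remedy is to pass to the subsequence $\Hat\Sigma(\bx)$ of best approximations that \emph{leave} the plane of the previous two (Definition~\ref{def:Hat:Sigma}). After this first acceleration one obtains Proposition~\ref{prop:key}, but the resulting bound (\ref{ieq:first:accel}) still carries a factor $\eps(u)^{-(6-3s)}$ that blows up along the singular set; a \emph{second} acceleration to a subsequence on which the distortion parameter $\eps(u_k)$ is monotone is needed to close the argument (Proposition~\ref{prop:hat:sigma'}). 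Neither acceleration is visible in your plan, and without them ``the content vanishes once $s>4/3$'' is simply not true.

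Your lower bound sketch is closer to what the paper does, though note that the spacing between sibling cells is \emph{not} uniform at scale $\diam\Delta(u)$; one needs the weighted version of the mass distribution principle in Theorem~\ref{thm:gen:lower:2}, with weights $\rho(u)$ depending on the level, to absorb the slowly varying parameters $\eps_k\to 0$, $N_k\to\infty$.
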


Singular vectors that lie on a rational hyperplane are said to 
  be \emph{degenerate}.  
Implicit in this terminology is the expectation that the set 
  $\Sing^*(d)$ of all \emph{nondegenerate} singular vectors is 
  somehow larger than the union of all rational hyperplanes 
  in $\R^d$, which is a set of Hausdorff dimension $d-1$.  
The papers \cite{Ba77}, \cite{Ya87}, \cite{Ry90} and \cite{Ba92} 
  give lower bounds on certain subsets of $\Sing^*(d)$ that, in 
  particular, imply $\Hdim \Sing^*(d)\ge d-1$.  
Theorem~\ref{thm:main} shows that strict inequality holds 
  in the case $d=2$.

\subsection*{Divergent trajectories}
There is a well-known dynamical interpretation of singular vectors.  
Let $G/\Gamma$ be the space of oriented unimodular lattices in $\R^{d+1}$, 
  where $G=\SL_{d+1}\R$ and $\Gamma=\SL_{d+1}\Z$.  
A path $(\Wedge_t)_{t\ge0}$ in $G/\Gamma$ is said to be \emph{divergent} 
  if for every compact subset $K\subset G/\Gamma$ there is a time $T$ 
  such that $\Wedge_t\not\in K$ for all $t>T$.  
By Mahler's criterion, $(\Wedge_t)$ is divergent iff the length of the 
  shortest nonzero vector of $\Wedge_t$ tends to zero as $t\to\infty$.  
It is not hard to see that $\bx\in\R^d$ is singular iff 
  $\ell(g_th_\bx\Z^{d+1})\to0$ as $t\to\infty$, where 
\begin{equation*}
  g_t = \begin{pmatrix}
        e^t &        &     &   \\
            & \ddots &     &   \\
            &        & e^t &   \\
            &        &     & e^{-dt} 
        \end{pmatrix}, \quad 
  h_\bx = \begin{pmatrix}
         1 &        &   & -x_1   \\
           & \ddots &   & \vdots \\
           &        & 1 & -x_d   \\
           &        &   &   1 
        \end{pmatrix}, 
\end{equation*}
  and $\ell(\cdot)$ denotes the length of the shortest nonzero vector.  
Thus, $\bx$ is singular if and only if $(g_th_\bx\Gamma)_{t\ge0}$ is a 
  divergent trajectory of the homogeneous flow on $G/\Gamma$ induced by 
  the one-parameter subgroup $(g_t)$ acting by left multiplication.  

As a corollary of Theorem~\ref{thm:main} we have 
\begin{corollary}\label{cor:main}
Let $D\subset\SL_3\R/\SL_3\Z$ be the set of points that lie on divergent 
  trajectories of the flow induced by $(g_t)$.  Then $\Hdim D=7\frac{1}{3}$.  
\end{corollary}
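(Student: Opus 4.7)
The plan is to show that $D$ has a local product structure of the form $P^- \times \Sing(2)$, where $P^-$ is a six-dimensional closed subgroup, yielding $\Hdim D = 6 + \Hdim\Sing(2) = 6 + \tfrac{4}{3}$ by Theorem~\ref{thm:main}. To set up, decompose the Lie algebra $\mathfrak{sl}_3\R$ into $\mathrm{Ad}(g_t)$-eigenspaces: a two-dimensional expanding summand $\mathfrak{u}^+ = \R E_{13} \oplus \R E_{23}$ of weight $+3$, a two-dimensional contracting summand $\mathfrak{u}^- = \R E_{31} \oplus \R E_{32}$ of weight $-3$, and the four-dimensional centralizer $\mathfrak{z}$ of weight $0$. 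Set $U^+ = \exp\mathfrak{u}^+ = \{h_\bx : \bx\in\R^2\}$ and $P^- = \exp(\mathfrak{z} \oplus \mathfrak{u}^-)$, the closed six-dimensional subgroup of block-lower-triangular matrices $\pmat{A & 0 \\ w^T & c}$ with $c\det A = 1$. The essential feature is that for every $p\in P^-$ the conjugate $g_tpg_{-t}$ stays in a compact subset of $G$ as $t\to+\infty$ (the off-diagonal block $w^T$ contracts by $e^{-3t}$).

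First I would show that $D$ is invariant under left multiplication by $P^-$: if $g\Gamma\in D$ and $p\in P^-$, then $g_t(pg)\Gamma = (g_tpg_{-t})(g_tg\Gamma)$, and since $g_tpg_{-t}$ stays bounded while $g_tg\Gamma$ diverges, so does the product. Next, on the open dense locus of $G$ where the upper-left $2\times 2$ block is invertible, every element admits a unique factorization $g = ph_\bx$ with $p\in P^-$ and $h_\bx\in U^+$; combined with the previous identity this shows $g\Gamma \in D$ iff $h_\bx\Gamma \in D$, which by the dynamical interpretation recalled before Theorem~\ref{thm:main} is equivalent to $\bx\in\Sing(2)$. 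Moreover every coset in $G/\Gamma$ admits a representative in this open locus (after a suitable integer change of basis on the right), so the factorization is available everywhere.

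To convert this into a Hausdorff dimension calculation, note that the multiplication map $\Psi\colon P^-\times U^+ \to G/\Gamma$, $(p,u)\mapsto pu\Gamma$, is a surjective local diffeomorphism; cover $G/\Gamma$ by countably many product neighborhoods $V^-\times V^+$ in which $D$ pulls back to $V^-\times(V^+\cap D_0)$, with $D_0 = \{h_\bx : \bx\in\Sing(2)\}$. For the lower bound, pick $V^+$ containing a unit square under $U^+\cong\R^2$; by the $\Z^2$-invariance of singularity and Theorem~\ref{thm:main}, $\Hdim(V^+\cap D_0)=\tfrac{4}{3}$, and the universal product inequality yields $\Hdim D \ge 6 + \tfrac{4}{3}$. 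For the upper bound, cover $\Sing(2)\cap V^+$ by balls of radii $r_i$ with $\sum r_i^{4/3+\eps}$ small; since $V^-$ is an open piece of a smooth six-manifold, each slab $V^-\times B(\bx_i,r_i)$ is covered by $O(r_i^{-6})$ balls of radius $r_i$, so $\sum r_i^{-6}\cdot r_i^{6+4/3+\eps}=\sum r_i^{4/3+\eps}$ remains bounded, giving $\Hdim D \le 6 + \tfrac{4}{3}$. The main subtlety lies precisely in this upper bound: because $\Sing(2)$ contains every rational line and is dense in $\R^2$, its upper box-counting dimension equals $2$, so the naïve box-dimension product formula gives only $\Hdim D\le 8$; one must argue via Hausdorff covers directly, exploiting the smoothness of the $P^-$ factor to control cover counts.
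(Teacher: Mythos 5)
Your argument follows the same route as the paper's proof: both identify the parabolic subgroup $P=\{p\in G : g_tpg_{-t}\text{ stays bounded as }t\to\infty\}$ (your $P^-$, which you compute to be six-dimensional), use the factorization $G=P\,U^+\Gamma$ with $U^+=\{h_\bx\}$, and conclude that the Hausdorff codimension of $D$ in $G/\Gamma$ equals that of $\Sing(2)$ in $\R^2$. The paper compresses the final step into a single sentence (``Since $P$ is a manifold and $\Gamma$ is countable, the Hausdorff codimension of $D$ coincides with that of $\Sing(d)$''), whereas you spell it out via the product-cover estimate and correctly flag the box-dimension pitfall — same content, just made explicit.
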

\begin{proof}
Let $P:=\{p\in G|g_tpg_{-t}\text{ stays bounded as }t\to\infty\}$ and 
  note that every $g\in G$ can be written as $ph_\bx\gamma$ for some 
  $p\in P$, $\bx\in\R^d$ and $\gamma\in\Gamma$.  
Since the distance between $g_tph$ and $g_th$ with respect to any 
  right-invariant metric on $G$ stays bounded as $t\to\infty$, 
  it follows that $$D=\cup_{\bx\in\Sing(d)}Ph_\bx\Gamma.$$ 
Since $P$ is a manifold and $\Gamma$ is countable, the Hausdorff 
  codimension of $D$ in $G/\Gamma$ coincides with that of $\Sing(d)$ 
  in $\R^d$.  
\end{proof}

Further results about singular vectors and divergent trajectories can 
  be found in the papers \cite{KW1}, \cite{KW2}, \cite{W1} and \cite{W2}.  

\subsection*{Related results}
It should be mentioned that the notion of a singular vector is dual 
  to that of a badly approximable vector.  
Recall that $\bx\in\R^d$ is \emph{badly approximable} if there is a $c>0$ 
  such that $\|q\bx-\bp\|>cq^{-1/d}$ for all $(\bp,q)\in\Z^d\times\Z_{>0}$.  
As with $\Sing(d)$, the set $\BA(d)$ of badly approximable vectors in $\R^d$ 
  admits a characterisation in terms of flow on $G/\Gamma$ induced by $(g_t)$.  
Specifically, $\bx\in\BA(d)$ if and only if $(g_th_\bx\Gamma)_{t\ge0}$ is a 
  \emph{bounded trajectory}, i.e. its closure in $G/\Gamma$ is compact.  
In \cite{Sc66} Schmidt showed that $\Hdim\BA(d)=d$, which implies that 
  the set $B\subset G/\Gamma$ of points that lie on bounded trajectories 
  of the flow induced by $(g_t)$ has $\Hdim B=\dim G$.  
In \cite{KM96} Kleinbock-Margulis generalised the latter statement to the 
  setting of non-quasi-unipotent homogeneous flows where 
  $G$ is a connected real semisimple Lie group, $\Gamma$ a lattice in $G$, 
  and $(g_t)$ a one-parameter subgroup such that $\text{Ad}~g_1$ has an 
  eigenvalue of absolute value $\neq1$.  

For divergent trajectories of such flows, Dani showed in \cite{Da85} that 
  if $\Gamma$ is of ``rank one'' then the set $D\subset G/\Gamma$ of points 
  that lie on divergent trajectories of the flow is a countable union of 
  proper submanifolds, implying that its Hausdorff dimension is integral 
  and strictly less than $\dim G$.  
Much less is known in the case of lattices of higher rank.  
In the special case of the reducible lattice $\Gamma=(\SL_2\Z)^n$ in the 
  product space $G=(\SL_2\R)^n$ with the one-paramter subgroup $(g_t)$ 
  inducing the same diagonal flow in each factor, it was shown in \cite{Ch07} 
  that the set $D$ has Hausdorff dimension $\dim G-\frac{1}{2}$, $n\ge2$.  
Corollary~\ref{cor:main} furnishes a higher rank example with $\Gamma$ an 
  irreducible lattice, suggesting the following 
\begin{conj}
For any non-quasi-unipotent flow $(G/\Gamma,g_t)$ on a finite-volume, 
  noncompact homogeneous space, the set $D\subset G/\Gamma$ of points 
  that lie on divergent trajectories has Hausdorff dimension strictly 
  less than $\dim G$.  
\footnote{It is a well-known result due to G.A.~Margulis that every 
quasi-unipotent flow on a finite-volume homogeneous space does not 
admit any divergent trajectories.}
\end{conj}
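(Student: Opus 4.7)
The conjecture substantially extends the main results of this paper, so I would base my proposal on adapting the self-similar covering and higher-dimensional best approximation machinery developed here to the general arithmetic setting. The overall strategy has three stages: fibre the divergent set over a ``singular set'' in an unstable horospherical subgroup, construct a cusp-by-cusp best approximation sequence via reduction theory, and conclude with a self-similar Cantor-type covering bound.

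First, by Margulis arithmeticity one may reduce to the case where $G$ is a semisimple real algebraic group and $\Gamma$ an arithmetic lattice; the rank-one situation is already settled by Dani \cite{Da85}. Decompose $\mathfrak{g}=\mathfrak{g}^-\oplus\mathfrak{g}^0\oplus\mathfrak{g}^+$ according to the generalised eigenspaces of $\operatorname{Ad} g_1$ with eigenvalues of absolute value $<1$, $=1$, $>1$ respectively; non-quasi-unipotence gives $\mathfrak{g}^+\neq 0$. Let $U^+=\exp\mathfrak{g}^+$ and let $P$ be the parabolic subgroup normalising $U^+$. Exactly as in the proof of Corollary~\ref{cor:main}, every $g\in G$ factors as $g=ph$ with $p\in P$ and $h\in U^+$, and $g_tpg_{-t}$ stays bounded in $t$. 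Hence $D$ is, locally, the product of a $(\dim P)$-dimensional piece and a singular set $S\subset U^+$; the conjecture reduces to $\Hdim S<\dim U^+$.

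Second, I would set up a family of best approximation sequences, one for each cusp. Borel--Harish-Chandra reduction theory exhibits finitely many $\Gamma$-conjugacy classes of rational parabolic subgroups $Q_1,\dots,Q_k$ equipped with proper height functions $\alpha_i:G/\Gamma\to[1,\infty)$ whose level sets exhaust the $i$-th cusp. A trajectory is divergent precisely when some $\alpha_i(g_th\Gamma)\to\infty$ monotonically for large $t$; the times at which $\alpha_i$ attains a new minimum along its $\Gamma$-orbit supply the generalised best approximations. I would then prove, mimicking the higher-dimensional continued fraction inequalities alluded to in the abstract, that consecutive best approximations satisfy quantitative spreading bounds tied to the positive eigenvalues of $\operatorname{Ad} g_1$ on $\mathfrak{g}^+$. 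These inequalities force $S$ into a countable union of self-similar trees of cylinders in $U^+$, with branching and contraction controlled by the spectrum of $\operatorname{Ad} g_1$.

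The final stage is the dimension bound: produce $\beta<\dim U^+$ such that the number of depth-$n$ cylinders at scale $r_n$ satisfies $N_n\ll r_n^{-\beta}$, whence a Vitali-type covering argument yields $\Hdim S\le\beta$. The principal obstacle, and what I expect to be the true core of the conjecture, is to secure the \emph{strict} inequality uniformly over all cusps and all admissible $(g_t)$. This demands a quantitative recurrence estimate showing that a positive proportion of the branches of the approximation tree are pruned at each depth by a Kleinbock--Margulis-style non-divergence theorem. I anticipate that the resulting gap $\dim U^+-\beta$ should be controlled by the smallest positive eigenvalue of $\operatorname{Ad} g_1$ acting on $\mathfrak{g}^+$; a rigorous implementation presumably requires an inductive argument along the flag variety of $G$, and carrying it out without case analysis on the Lie type of $G$ is, in my view, the genuinely hard part.
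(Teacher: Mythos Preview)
The statement you address is a \emph{Conjecture} in the paper, not a theorem; the paper offers no proof of it and there is nothing to compare your proposal against.  The paper establishes only the single special case $G=\SL_3\R$, $\Gamma=\SL_3\Z$, $g_t=\diag(e^t,e^t,e^{-2t})$ (Corollary~\ref{cor:main}), cites Dani~\cite{Da85} for rank-one lattices and \cite{Ch07} for a reducible product, and then poses the general assertion as an open problem suggested by these examples.

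Your proposal is accordingly not a proof but a research programme, and you yourself flag the decisive missing ingredient: a uniform quantitative pruning estimate giving the \emph{strict} inequality across all cusps and all admissible $(g_t)$.  Several intermediate steps are also unestablished or overstated.  The reduction to arithmetic $\Gamma$ via Margulis applies only to irreducible lattices in higher-rank semisimple $G$, whereas the conjecture is posed for arbitrary finite-volume noncompact $G/\Gamma$.  The factorisation $g=ph$ with $p\in P$, $h\in U^+$ holds only on the open Bruhat cell, not for every $g\in G$.  The assertion that divergence forces some single height function $\alpha_i$ to tend monotonically to infinity is false when there are several cusps, since a divergent trajectory may oscillate among them.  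Finally, the generalised best-approximation inequalities you invoke are exactly the structure this paper builds only for $d=2$; the introduction explicitly names the case $d\ge3$ as a ``main obstruction'' already within $\SL_{d+1}$, let alone for a general semisimple group.  Your outline correctly identifies the architecture one would hope for, but every load-bearing lemma remains conjectural, which is consistent with the statement's status in the paper.
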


Similar results are known for Teichm\"uller flows that are consistent 
  with the behavior of non-quasi-unipotent flows; for example, in \cite{Ma92} 
  Masur showed that for any Teichm\"uller disk $\tau$, the set of points that 
  lie on divergent trajectories is a subset $D_\tau\subset\tau$ of Hausdorff 
  codimension at least $\frac{1}{2}$.  
For some $\tau$, the Hausdorff codimension $\frac{1}{2}$ is realised \cite{Ch03}.  
It is also known that for a generic $\tau$, the Hausdorff codimension of $D_\tau$ 
  is strictly less than one \cite{MS91}.  
Likewise, the set $B_\tau\subset \tau$ of points that lie on bounded trajectories 
  of the Teichm\"uller flow is shown in \cite{KW04} to have $\Hdim B_\tau=\dim\tau$.  

\subsection*{Further applications}
Let $\DI_\delta(d)$ be the set of all $\bx\in\R^d$ for which there exists $T_0$ 
  such that for all $T>T_0$ the system of inequalities (\ref{ieq:sing}) admits 
  an integer solution.  
In the course of proving Theorem~\ref{thm:main} we obtain 
\begin{theorem}\label{thm:main:2}
There are positive constants $c_1,c_2$ such that 
  $$\frac{4}{3}+\exp(-c_1\delta^{-4}) \le \Hdim \DI_\delta(2) 
      \le \frac{4}{3} + c_2\delta.$$
\end{theorem}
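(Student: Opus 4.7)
The plan is to prove both bounds by $\delta$-perturbing the self-similar covering and Cantor-set constructions that establish Theorem~\ref{thm:main}, carefully tracking how $\delta$ enters the dimension estimates.

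For the upper bound $\Hdim\DI_\delta(2)\le\tfrac{4}{3}+c_2\delta$, I would run the multiscale covering argument that proves $\Hdim\Sing(2)\le\tfrac{4}{3}$ on $\DI_\delta(2)$, with the covering slabs inflated by the $\delta$-slack. A point $\bx\in\DI_\delta(2)$ lies, at each sufficiently large scale $T$, in a slab of width $\delta/(qT^{1/2})$ about the rational line through some $(\bp,q)\in\Z^3$ with $0<q<T$; the natural cover of $\DI_\delta(2)$ is the union of such slabs parametrised by towers of pseudo-best-approximations. Each slab is thicker than its $\delta=0$ counterpart, and the branching ratio from scale $T$ to the next scale inflates multiplicatively by $1+O(\delta)$. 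Substituting this perturbation into the self-similar equation that identifies $\tfrac{4}{3}$ as the critical exponent for $\Sing(2)$ produces an $O(\delta)$ shift in the exponent, yielding the stated bound.

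For the lower bound I would enlarge the Cantor set $C\subset\Sing^*(2)$ of full dimension $\tfrac{4}{3}$ coming from the construction behind Theorem~\ref{thm:main}, by grafting additional branches that the $\delta$-slack permits. At each level of the tower of best approximations, every $(\bp,q)$ satisfying $\|q\bx-\bp\|<\delta T^{-1/2}$ yields an admissible continuation, not just those meeting the tighter inequality corresponding to $\delta\to 0$. Applying the mass-distribution principle to a Frostman measure on the enlarged Cantor set gives a dimension increment of order $\log(1+N(\delta))/\log(\lambda(\delta))$, where $N(\delta)$ counts the extra branches per node and $\lambda(\delta)$ is the scale ratio per level. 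Counting admissible angular deviations in the three-dimensional lattice of approximation vectors at each scale, I expect $N(\delta)$ to grow polynomially in $\delta$ while the iteration through the self-similar hierarchy converts this into the small increment $\exp(-c_1\delta^{-4})$.

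The main technical obstacle is to show that the $\delta$-perturbation propagates cleanly through the self-similar structure. On the upper side, the inflated covers must remain valid at every scale transition, so that the multiplicative $O(\delta)$ error in the branching ratio does not accumulate to something larger; on the lower side, the grafted branches must satisfy the separation and transversality properties required by the mass-distribution principle, or else no useful Frostman measure can be placed on the enlarged Cantor set. The sharp quartic dependence $\delta^{-4}$ in the lower bound is the most delicate quantitative feature and traces, I expect, to the three-dimensional ambient geometry of approximation vectors combined with the iteration across scales in the self-similar hierarchy that underlies the Sing(2) dimension calculation.
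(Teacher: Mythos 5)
Your high-level plan --- perturb the covering and Cantor-set constructions behind Theorem~\ref{thm:main} and track the $\delta$-dependence --- is indeed what the paper does, but the specific mechanisms you describe on both sides do not match, and in both cases the mismatch is material.

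On the upper bound, you propose inflating the covering sets by a $\delta$-dependent factor and claim the branching ratio is multiplied by $1+O(\delta)$. But the covering sets the paper uses, the domains $\Delta(v)=\{\bx : v\in\Sigma(\bx)\}$, do not depend on $\delta$ at all. What changes is not the geometry of the pieces but the \emph{combinatorial transition relation} $\Hat\sigma'_\eps\subset Q_\eps\times Q_\eps$ (which pairs of best approximations may follow one another), through the single parameter $\eps$ tied to $\delta$ by $\delta=\eps^{3/2}/2$. The $O(\delta)$ in the dimension estimate then falls out of Proposition~\ref{prop:hat:sigma'}, which gives $s(\cB_\eps,Q_\eps,\Hat\sigma'_\eps)=\tfrac43+O(\eps^{3/2})$; that $O(\eps^{3/2})$ comes from balancing $\eps^{9s-9}\asymp(3s-4)^2$ in (\ref{ieq:first:accel}), not from any $1+O(\delta)$ inflation of ball diameters. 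I do not see how the slab-inflation picture produces a self-similar structure to which Theorem~\ref{thm:gen:upper} applies, so this step is a genuine gap.

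On the lower bound, the source of the $\exp(-c_1\delta^{-4})$ is misidentified. It does not come from polynomially many grafted branches being ``converted by iteration'' into an exponential, nor from three-dimensional ambient geometry. In the paper's computation (Proposition~\ref{prop:DI(2):lb}), condition (iv) of Theorem~\ref{thm:gen:lower:1} reduces, after estimating $\sum_{a'\le N}\phi(a')/(a')^{3s-2}\gs\int_e^N dx/(x^{3s-3}\log x)\asymp\log\tfrac{1}{3s-4}$ for $N$ large, to the inequality $\eps^{9s-6}\bigl|\log(s-\tfrac43)\bigr|>C$. At $s=\tfrac43$ the prefactor is $\eps^6$, and since $\delta\asymp\eps^{3/2}$ we have $\eps^{-6}\asymp\delta^{-4}$; so the condition is satisfiable exactly when $s-\tfrac43\ls\exp(-c\delta^{-4})$, with $N$ chosen large depending on $\eps$. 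The exponent $4$ is thus an algebraic artifact of $\delta=\eps^{1+1/d}$ with $d=2$ and the exponent $9s-6$ in Proposition~\ref{prop:key}, not a geometric count of angular branches. Moreover, the paper does not enlarge a $\Sing(2)$ Cantor set: the $\Sing(2)$ lower bound uses slowly varying sequences $\eps_k\to0$, $N_k\to\infty$, while $\DI_\delta(2)$ uses fixed $\eps,N$; the $\DI_\delta$ construction is not obtained by ``adding branches'' to the $\Sing$ one. Your $\log(1+N(\delta))/\log\lambda(\delta)$ formula is the dimension of a self-similar set, not an increment, so the heuristic as stated does not compute the right quantity. To fix the argument you would need to set up the spacing hypothesis (\ref{ieq:spacing:1}) explicitly (this is Lemma~\ref{lem:spacing}) and verify (\ref{ieq:lower:1}) via the logarithmically divergent Euler-totient sum, which is the analytic heart of the estimate and is absent from your sketch.
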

The upper bounds in Theorems~\ref{thm:main} and \ref{thm:main:2} are obtained 
  in \S\ref{S:upper} while the lower bounds are obtained in \S\ref{S:lower}.  

\begin{corollary}\label{cor:main:2}
There is a compact subset $K\subset\SL_3\R/\SL_3\Z$ such that the set $D_K$ 
  of all points that lie on trajectories of $(g_t)$ that eventually stay 
  outside of $K$ is a set of Hausdorff dimension strictly less than $8$.  
\end{corollary}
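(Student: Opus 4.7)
The plan is to take $K = K_\epsilon := \{\Lambda \in \SL_3\R/\SL_3\Z : \ell(\Lambda) \ge \epsilon\}$ for a sufficiently small $\epsilon$, translate the dynamical condition on $\bx$ via Mahler's criterion, and then mimic the fibering argument of Corollary~\ref{cor:main}, using Theorem~\ref{thm:main:2} in place of the bound $\Hdim \Sing(d) \le 4/3$.

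First, I translate the dynamical condition on $\bx \in \R^2$. By Mahler's criterion, the trajectory $(g_t h_\bx\Gamma)_{t\ge 0}$ eventually leaves $K_\epsilon$ iff $\ell(g_t h_\bx\Z^3) < \epsilon$ for all $t > t_0$. Picking $(\bp,q) \in \Z^3 \setminus\{0\}$ that realizes this length and setting $T := \epsilon e^{2t}$, the conditions $e^t\|q\bx - \bp\| < \epsilon$ and $e^{-2t}|q| < \epsilon$ become
\[
0 < q < T \quad\text{and}\quad \|q\bx - \bp\| < \epsilon^{3/2}/T^{1/2},
\]
so $\bx \in \DI_{\epsilon^{3/2}}(2)$. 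By Theorem~\ref{thm:main:2}, the set of such $\bx$ has Hausdorff dimension at most $\tfrac{4}{3} + c_2 \epsilon^{3/2}$, which is strictly less than $2$ once $\epsilon$ is small.

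Next, I promote this bound from $H\cong\R^2$ to $G/\Gamma$. Following the proof of Corollary~\ref{cor:main}, every $g \in G$ factors as $p h_\bx\gamma$ with $p \in P$, $\bx \in \R^2$, $\gamma \in \Gamma$. Because $g_t p g_{-t}$ remains bounded as $t \to \infty$ (indeed converges to the central part of $p$), the trajectories $g_t p h_\bx\Gamma$ and $g_t h_\bx\Gamma$ stay at bounded distance in any right-invariant metric, so the condition ``the trajectory of $g\Gamma$ eventually leaves $K_\epsilon$'' is, up to a $p$-dependent constant, the condition on $\bx$ above. The codimension-preservation argument from Corollary~\ref{cor:main} then bundles $D_{K_\epsilon}$ over the $\bx$-set via the $6$-dimensional group $P$ and countable $\Gamma$, yielding $\Hdim D_{K_\epsilon} < \dim G/\Gamma = 8$.

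The main obstacle is that, unlike divergence, ``eventually leaves $K_\epsilon$'' is not invariant under left multiplication by $P$: the comparison constant relating $\ell(g_t p h_\bx\Z^3)$ to $\ell(g_t h_\bx\Z^3)$ depends on $p$, and $P$ is non-compact. I would resolve this by localizing. Exhaust $G/\Gamma$ by the Mahler-compact sets $F_m := \{\Lambda : \ell(\Lambda) \ge 1/m\}$ and pick Siegel-set representatives on each $F_m$ so that the comparison constant is bounded by some $C_m$ (growing at most polynomially in $m$). The quantitative bound $\Hdim \DI_\delta(2) \le \tfrac{4}{3} + c_2 \delta$ of Theorem~\ref{thm:main:2} is then strong enough to absorb the factor $C_m$ for a sufficiently small (but fixed) $\epsilon$, keeping the estimate on each $D_{K_\epsilon}\cap F_m$ strictly below $2$ in the $\bx$-direction. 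Assembling via the countable union $D_{K_\epsilon} = \bigcup_m (D_{K_\epsilon}\cap F_m)$ preserves the strict inequality $\Hdim D_{K_\epsilon} < 8$.
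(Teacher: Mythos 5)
Your translation to the Diophantine condition is correct: $\ell(g_t h_\bx\Z^3)<\epsilon$ for all large $t$ is equivalent (with $T=\epsilon e^{2t}$) to $\bx\in\DI_{\epsilon^{3/2}}(2)$. You also correctly identify the genuine obstacle, namely that ``eventually outside $K_\epsilon$'' is not invariant under left multiplication by a non-compact set of elements of $P$, unlike divergence. However, your proposed resolution does not close the gap.

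The issue is that ``absorbing $C_m$ for a sufficiently small (but fixed) $\epsilon$'' is not possible. On the chunk $F_m$ the representatives $p\in P$ you must allow have $\sup_{t\ge 0}\|g_tp^{-1}g_{-t}\|_{op}$ of order $m$ (a lattice in $F_m\setminus F_{m-1}$ has a short vector of length about $1/m$, while $h_\bx\Z^3$ has $\ell$ bounded below uniformly, so any factorization $\Lambda=ph_\bx\Gamma$ forces $\|p^{-1}\|_{op}\gtrsim m$). Hence the effective $\delta$ on $F_m$ is $\delta_m\asymp (C_m\epsilon)^{3/2}$ with $C_m\to\infty$, and the estimate $\Hdim\DI_{\delta_m}(2)\le \tfrac43+c_2\delta_m$ eventually exceeds $2$ regardless of how small the fixed $\epsilon$ is (indeed $\DI_\delta(2)=\R^2$ once $\delta\ge1$ by Dirichlet). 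The bound you get on $D_{K_\epsilon}\cap F_m$ therefore tends to $8$ as $m\to\infty$, and since Hausdorff dimension of a countable union is the supremum of the dimensions, $\bigcup_m(D_{K_\epsilon}\cap F_m)$ only gives $\Hdim D_{K_\epsilon}\le 8$, not $<8$. The last sentence of your argument (``assembling via the countable union preserves the strict inequality'') is precisely the false step.

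A fix requires more than the raw countable cover. Two structural facts are available that your argument does not use: $D_{K_\epsilon}$ is forward $g_t$-invariant, so $\Hdim D_{K_\epsilon}=\Hdim\{\Lambda:\ell(g_t\Lambda)<\epsilon\ \forall t>0\}$; and the part of $P$ that actually affects the asymptotic comparison constant is the Levi factor $L$ of $P=L\ltimes U$ (the unipotent radical $U$ satisfies $g_tug_{-t}\to I$ and so contributes a factor tending to $1$), and $L$ contains and commutes with the flow $(g_t)$ itself. Some combination of these is needed to confine the Levi contribution to a fixed compact set before invoking Theorem~\ref{thm:main:2}; as written, your localization does not do this.
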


Our techniques also allow us to answer a question of Starkov in \cite{St} 
  concerning the existence of divergent trajectories of the flow $g_t$.  
\S\ref{S:Starkov} is devoted to a proof of the following.  
\begin{theorem}\label{thm:main:3}
Given any function $\eps(t)\to0$ as $t\to\infty$ there is a dense set of 
  $\bx\in\Sing^*(2)$ such that $\ell(g_th_\bx\Z^{d+1})\ge\eps(t)$ as $t\to\infty$.  
\end{theorem}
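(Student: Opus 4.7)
The plan is to adapt the inductive tree construction of nondegenerate singular vectors from \S\ref{S:lower}, relaxing the dimension count in favor of pointwise control on the rate of divergence. Assume without loss of generality that $\eps$ is nonincreasing and strictly positive. Recall that in the construction of \S\ref{S:lower}, each $\bx\in\Sing^*(2)$ is obtained as the limit of a sequence of best approximations $(\bp_n,q_n)\in\Z^2\times\Z_{>0}$ chosen according to explicit admissibility rules, and that the shortest-vector function $t\mapsto\ell(g_t h_\bx\Z^3)$ has a local minimum of value $\ell_n\sim(q_n\|q_n\bx-\bp_n\|^2)^{1/3}$ at the time $t_n\sim\tfrac13\log(q_n/\|q_n\bx-\bp_n\|)$. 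Between two consecutive minima, $\ell$ rises and falls exponentially; in particular $\ell(g_t h_\bx\Z^3)\ge\min(\ell_n,\ell_{n+1})$ on $[t_n,t_{n+1}]$.

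Given $\eps$, I would build $\bx$ by extending an admissible initial segment one step at a time, maintaining the invariant $\ell_n\ge\eps(t_n)$. At each stage, having chosen $(\bp_n,q_n)$, the admissibility rules permit the next pair $(\bp_{n+1},q_{n+1})$ to be selected with $q_{n+1}$ arbitrarily large and with the approximation error $\|q_{n+1}\bx-\bp_{n+1}\|$ taking a range of values within the admissible class. In particular, one can arrange $t_{n+1}$ to be so far out that $\eps(t_{n+1})$ is as small as needed, while tuning the approximation so that $\ell_{n+1}\ge\eps(t_{n+1})$ --- this requires only that $q_{n+1}\|q_{n+1}\bx-\bp_{n+1}\|^2$ not be driven too close to zero, a mild constraint leaving ample room inside the admissible class. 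Combined with the monotonicity of $\eps$ and the interval bound above, the invariant yields $\ell(g_t h_\bx\Z^3)\ge\eps(t)$ for all $t\ge t_0$, while the admissibility rules ensure the limit $\bx$ lies in $\Sing^*(2)$.

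For density, one exploits the branching structure of the tree: the initial finite segment of best approximations may be freely chosen so that $\bp_N/q_N$ lies arbitrarily close to any prescribed $\bx_0\in\R^2$, and the subsequent choices force $|\bx-\bp_N/q_N|$ to be small in $q_N$, so the resulting $\bx$ can be placed as close to $\bx_0$ as desired. The principal obstacle is verifying compatibility of the slow-decay invariant with the number-theoretic admissibility rules that enforce nondegeneracy: those rules constrain $q_{n+1}$ to specific residues and ranges relative to $q_n$, but the freedom they preserve --- arbitrary size of $q_{n+1}$ within the admissible class together with a comparable range for the approximation error --- is precisely what the slow-decay construction requires, so compatibility reduces to a bookkeeping check.
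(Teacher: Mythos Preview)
Your overall plan coincides with the paper's: build $\bx$ as the nested limit of a sequence $(u_k)$ chosen via the machinery of \S\ref{S:lower}, controlling the depth of the local minima of $W_\bx$ so that they decay no faster than the prescribed rate. The density argument you give is also the same as the paper's.

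There is, however, one genuine gap. The sequence $(u_k)$ produced by the construction in \S\ref{S:lower} is \emph{not} the full sequence of best approximations of $\bx$: it is the accelerated subsequence (each $u_{k+1}\in\cN_\eps(u_k)$ skips over the intermediate convergents lying in $L(u_{k+1})$). Consequently there will in general be further local minima of $W_\bx$ strictly between the times $t_k$ and $t_{k+1}$ associated to $u_k$ and $u_{k+1}$, and your assertion that $\ell(g_th_\bx\Z^3)\ge\min(\ell_k,\ell_{k+1})$ on $[t_k,t_{k+1}]$ is not automatic. This is exactly what Theorem~\ref{thm:suff}(d) supplies: it shows (via a short case analysis separating vectors inside and outside $\Z u_k+\Z u_{k+1}$) that $W_\bx$ is bounded below by the piecewise-linear function determined by the subsequence alone, up to an additive constant $\log(1-\eps^6)$. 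You should invoke this, not the bare shape of $W_\bx$; what you call ``the principal obstacle'' (compatibility with the admissibility rules) is in fact the easier part.

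A secondary difference in tactics: you propose to decouple $t_{k+1}$ from $\ell_{k+1}$ by pushing $q_{k+1}$ far out while keeping the error from collapsing. This is viable, but note that in the language of Definition~\ref{def:psi} the parameter $c$ alone does not give you that freedom (the range $M_\eps<c<2M_\eps$ ties $|u'|$ to $\eps(u')$); you must also vary $|L'|$ by taking $L'=a\Hat L(u)+bL(u)$ with $a$ large. The paper instead takes the minimal choice $L'=\Hat L(u)$ throughout (Lemma~\ref{lem:eps:tau}) and compensates by first regularising the target rate (Lemma~\ref{lem:f}) so that the resulting coupled recursion $|\log\eps(u_{k+1})|\approx f(\tau(u_{k+1}))$, $\tau(u_{k+1})\approx\tau(u_k)+2|\log\eps(u_{k+1})|+|\log\eps(u_k)|$ is solvable. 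Either route works; yours avoids the regularisation lemma at the cost of exploiting the full two-parameter family $\psi_\eps(u,a,b,c)$.
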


As a by-product of our analysis, we also obtain applications to number theory.  
\begin{theorem}\label{thm:main:4}
Let $\tfrac{\bp_j}{q_j},j=0,1,\dots$ be the sequence of best approximations 
  to $\bx$ relative to some given norm $\|\cdot\|$ on $\R^d$.  
Let $\bm_j\in\Z^d$ be given by $m_{j,i}=p_{j,i}q_{j+1}-p_{j+1,i}q_j$.  Then 
  $$\frac{\|\bm_j\|}{q_j(q_{j+1}+q_j)} \le \left\|\bx-\frac{\bp_j}{q_j}\right\| 
     \le \frac{2\|\bm_j\|}{q_jq_{j+1}}.$$  
\end{theorem}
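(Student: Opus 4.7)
The plan is to introduce the error vector $\bv_j := q_j\bx - \bp_j$ (so that $\|\bx - \bp_j/q_j\| = \|\bv_j\|/q_j$) and derive the key identity $\bm_j = q_j\bv_{j+1} - q_{j+1}\bv_j$, which follows immediately by substituting $\bp_i = q_i\bx - \bv_i$ into the definition of $\bm_j$.  This bridges the ``integer'' and ``error'' descriptions of $\bm_j$ and drives both bounds.

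For the lower bound, applying the triangle inequality to the identity gives $\|\bm_j\| \le q_j\|\bv_{j+1}\| + q_{j+1}\|\bv_j\|$.  Since $\bp_{j+1}/q_{j+1}$ is the next best approximation after $\bp_j/q_j$, we have $\|\bv_{j+1}\| \le \|\bv_j\|$; hence $\|\bm_j\| \le (q_j + q_{j+1})\|\bv_j\|$, and dividing by $q_j(q_j+q_{j+1})$ delivers the lower bound on $\|\bx - \bp_j/q_j\|$.

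For the upper bound, I would rewrite the identity as $q_{j+1}\bv_j = q_j\bv_{j+1} - \bm_j$ so that $q_{j+1}\|\bv_j\| \le q_j\|\bv_{j+1}\| + \|\bm_j\|$ by the triangle inequality, reducing the problem to showing $q_j\|\bv_{j+1}\| \le \|\bm_j\|$ (which then gives $q_{j+1}\|\bv_j\| \le 2\|\bm_j\|$).  To establish this, I would apply the best-approximation property to the lattice point $(\bp_{j+1} - k\bp_j,\, q_{j+1} - kq_j)$ with $k = \lfloor q_{j+1}/q_j\rfloor$: writing $r = q_{j+1} - kq_j \in [0, q_j)$, this auxiliary denominator is (generically) a positive integer strictly less than $q_j$, so best approximation forces $\|\bv_{j+1} - k\bv_j\| \ge \|\bv_j\|$.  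Rewriting the left-hand side via the identity turns this into a lower bound of the form $(q_j - r)\|\bv_j\| \le \|\bm_j\|$.

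The main obstacle is closing the gap when $q_{j+1}/q_j$ is close to $1$ (equivalently, $k = 1$ and $r$ is close to $q_j$): in this regime the single inequality just obtained is too weak to secure the constant $2$.  I would complement it by applying the best-approximation property to the companion lattice point $((k+1)\bp_j - \bp_{j+1},\, (k+1)q_j - q_{j+1})$, whose denominator $q_j - r$ is also strictly between $0$ and $q_j$, yielding the symmetric bound $r\|\bv_j\| \le \|\bm_j\|$.  Combining the two and invoking the identity $q_j\bv_{j+1} = \bm_j + q_{j+1}\bv_j$ should produce $q_j\|\bv_{j+1}\| \le \|\bm_j\|$ uniformly; the boundary cases $r = 0$ (where $q_{j+1} = kq_j$ with $k \ge 2$ automatically) and the initial index $j = 0$ will need to be verified directly from the identity and $\|\bv_{j+1}\| \le \|\bv_j\|$.
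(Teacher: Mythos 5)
Your identity $\bm_j = q_j\br_{j+1} - q_{j+1}\br_j$, writing $\br_j := q_j\bx - \bp_j$ for your error vectors, is correct, and your lower-bound argument (triangle inequality together with $\|\br_{j+1}\| < \|\br_j\|$) is a correct and genuinely more elementary derivation than the paper's, which obtains the same bound from the geometric Lemma~\ref{lem:dist:w} applied to the mediant $v_j + v_{j+1}$.

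The upper bound, however, has a real gap. Your reduction is sound: once you have $q_j\|\br_{j+1}\| \le \|\bm_j\|$, the rearranged identity $q_{j+1}\br_j = q_j\br_{j+1} - \bm_j$ gives $q_{j+1}\|\br_j\| \le q_j\|\br_{j+1}\| + \|\bm_j\| \le 2\|\bm_j\|$, which is the claimed bound. But your auxiliary inequalities do not deliver $q_j\|\br_{j+1}\| \le \|\bm_j\|$. Testing against $v_j$ at denominators $r$ and $q_j-r$ (with $r = q_{j+1} - \lfloor q_{j+1}/q_j\rfloor q_j$) gives, after your rewriting, $\|\bm_j\| > (q_j-r)\|\br_j\|$ and $\|\bm_j\| > r\|\br_j\|$. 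The most these yield is $\|\bm_j\| > \max(r,q_j-r)\|\br_j\| \ge \tfrac{q_j}{2}\|\br_j\|$, i.e.\ $\|\bx - \bp_j/q_j\| < 2\|\bm_j\|/q_j^2$ --- strictly weaker than the theorem, which has $q_jq_{j+1}$ in the denominator and where $q_{j+1}$ can be arbitrarily larger than $q_j$. Both of your auxiliary bounds involve only $\|\br_j\|$, and the only relation to $\|\br_{j+1}\|$ you have, namely $\|\br_{j+1}\| < \|\br_j\|$, points in the wrong direction, so no combination of them with the identity produces $q_j\|\br_{j+1}\| \le \|\bm_j\|$.

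The missing ingredient is a best-approximation inequality for $v_{j+1}$ rather than for $v_j$. Apply property (i) of best approximations to $v_{j+1}$ with the lattice point $(\bp_{j+1}-\bp_j,\, q_{j+1}-q_j)$, whose denominator lies strictly between $0$ and $q_{j+1}$; this gives $\|\br_{j+1} - \br_j\| > \|\br_{j+1}\|$. Then rewrite the identity as
\begin{equation*}
\bm_j \;=\; q_{j+1}(\br_{j+1} - \br_j) \;-\; (q_{j+1}-q_j)\,\br_{j+1}
\end{equation*}
and apply the reverse triangle inequality:
\begin{equation*}
\|\bm_j\| \;\ge\; q_{j+1}\|\br_{j+1}-\br_j\| \,-\, (q_{j+1}-q_j)\|\br_{j+1}\| \;>\; q_j\|\br_{j+1}\|.
\end{equation*}
This is exactly the bound you need, with no casework on $k$, $r$, or boundary indices, and it keeps the argument entirely elementary. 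The paper instead derives the factor $2$ in Theorem~\ref{thm:best:1} from the lattice element $u_+ \in L(v_{j+1})$ and the ball inclusion of Theorem~\ref{thm:diam}, so your overall approach remains a different and simpler route once this step is repaired.
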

This result, proved in \S\ref{S:best:approx}, essentially generalises the 
  fundamental inequalities satisfied by convergents of continued fractions.  

The basic idea behind the proof of Theorem~\ref{thm:main} is to cover 
  $\Sing(d)$ by sets of the form 
  $$\Delta(v)=\{\bx : v \text{ is a best approximation to } \bx\}.$$
The diameters of these sets are approximately (see (\ref{cor:diam})) 
  $$\frac{\delta_v}{|v|^{1+1/d}}$$ 
  where $\delta_v$ measures the distortion of some $d$-dimensional 
  lattice $\cL(v)$ associated to $v$.  
It turns out that these lattices become very distorted for best 
  approximations to points in $\Sing(d)$ (Theorem~\ref{thm:char}) 
  and this gives additional control over the size of $\Delta(v)$.  

To get Hausdorff dimension estimates, we use a technical device 
  introduced in \cite{Ch07} (self-similar coverings) which allows 
  one to systematically make refinements to the initial covering.  
Each refinement corresponds to a method of choosing subsequences 
  of best approximations that lead to more efficient covers.  
As is well-known, there can be arbitrarily long (consecutive) 
  sequences of ``colinear'' best approximations, i.e. lying on a 
  common (rational) line in $\R^d$.  
(See \cite{JL2}.)  The first ``method'', or \emph{acceleration} 
  as we prefer to call it, is to take the subsequence of best 
  approximations consisting of those that do not lie on the 
  rational line determined by the previous two.  
This almost gives the upper bound (Proposition~\ref{prop:key}) 
  except, for technical reasons, it is necessary to pass to a 
  further subsequence where the distortion parameter $\delta_v$ 
  is monotone.  
For the lower bound we construct Cantor-like subsets of $\Sing(d)$ 
  and use a lower bound estimate (Theorem~\ref{thm:gen:lower:1}) 
  involving a ``spacing condition'' (iii) in terms of a ``local 
  Hausdorff dimension formular'' (iv).  
As an illustration of the basic technique, we show that 
  the set of real numbers with divergent partial quotients is a 
  set of Hausdorff dimension $\tfrac12$ (Theorem~\ref{thm:dpq}), 
  although accelerations are not needed for this calculation.  

Perhaps the most significant contribution this paper makes to 
  the theory of simultaneous Diophantine approximation is the 
  introduction of the sets $\cL(v)$, which are computable objects 
  containing useful information about the approximation properties 
  of the rational point.  
It may be helpful to think of it as an object that encodes the 
  relative positions of other rationals in the vicinity of $v$.  
One of the main obstructions when addressing the case $d\ge3$ 
  is the fact that the moduli space for the lattice $\cL(v)$ is 
  no longer a space of rank one.  

\subsection*{Acknowledgments}
The author would like to thank Alex Eskin and Barak Weiss for 
  many discussions while this work was in progress.  
He would also like to thank the referee for a very careful 
  reading of an earlier version of this paper as well as the 
  many helpful suggestions improving the exposition.  
Last, but not least, the author would like to thank his wife, 
  Ying Xu, for her constant encouragement and unwavering support.

\section{Sequence of Best Approximations}\label{S:best:approx}
Let $\|\cdot\|$ be any norm on $\R^d$ and let $\|\cdot\|'$ denote the norm on 
  $\R^{d+1}$ given by $\|(\bx,y)\|':=\max(\|\bx\|,|y|)$.
For any $\bx\in\R^d$, let $W_\bx:\R\to\R$ be the function defined by 
  $$W_\bx(t)=\log\ell(g_th_\bx\Z^{d+1})$$ 
  where $\ell(\cdot)$ denotes the $\|\cdot\|'$-length of the shortest nonzero vector.  
\begin{lemma}\label{lem:PL}
The function $W_\bx$ is continuous, piecewise linear with slopes $-d$ and $+1$; 
  moreover, it has infinitely many local minima if and only if $\bx\not\in\Q^d$.  
\end{lemma}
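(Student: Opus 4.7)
The plan is to use the explicit coordinates to write the shortest-vector length in $g_th_\bx\Z^{d+1}$ as a minimum of V-shaped functions indexed by integer points. For $v=(\bp,q)\in\Z^{d+1}\setminus\{0\}$, the image $g_th_\bx v$ has $\|\cdot\|'$-norm $\max(e^t\|q\bx-\bp\|,e^{-dt}|q|)$, so setting
$$f_v(t):=\log\max(e^t\|q\bx-\bp\|,\,e^{-dt}|q|)$$
I have $W_\bx=\inf_{v\neq0}f_v$. Each $f_v$ is a continuous V-shape with left slope $-d$ (coming from the $|q|$ term) and right slope $+1$ (coming from the $\|q\bx-\bp\|$ term), degenerating to a single slope when $q=0$ or when $\bx=\bp/q$.

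First I would upgrade this infimum to a locally finite minimum. On any compact interval $[a,b]$, Minkowski's theorem in the unimodular lattice $g_th_\bx\Z^{d+1}$ gives $W_\bx(t)\le\log c_d$ uniformly in $t$; only vectors $v$ with $f_v(t)\le\log c_d$ for some $t\in[a,b]$ can contribute, and such $v$ satisfy $|q|\le c_de^{db}$ and, given $q$, $\bp$ lies in a $\|\cdot\|$-ball of radius $c_de^{-a}$ around $q\bx$. Hence only finitely many integer $v$ are relevant on $[a,b]$, and $W_\bx$ is a finite min of continuous piecewise linear functions with slopes in $\{-d,+1\}$, so it inherits these properties.

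For the local-minima dichotomy, the key observation is that since only two slopes appear, every vertex of $W_\bx$ is a slope change, so vertices alternate between local maxima ($+1\to-d$) and local minima ($-d\to+1$). Hence finitely many local minima would force $W_\bx$ to be eventually linear with a single slope. The Minkowski upper bound rules out eventual slope $+1$; and eventual slope $-d$ would mean $W_\bx(t)=-dt+C$ for all large $t$, so the minimizing $v_t=(\bp_t,q_t)$ satisfies $e^{-dt}|q_t|\ge e^t\|q_t\bx-\bp_t\|$ with $|q_t|\equiv e^C$ constant, forcing $\|q^*\bx-\bp_t\|\to0$ along a subsequence with constant $q_t=q^*$ and hence $\bx\in\Q^d$. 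Conversely, when $\bx=\bp_0/q_0\in\Q^d$ in lowest terms, any $v$ not an integer multiple of $(\bp_0,q_0)$ satisfies $\|q\bx-\bp\|\ge c/q_0$ for a positive constant $c$ (since $q\bp_0-q_0\bp$ is a nonzero integer vector), so $f_v(t)\to\infty$ and $W_\bx(t)=-dt+\log q_0$ for all large $t$, with no more local minima. The main obstacle is conceptual rather than computational: rather than enumerating local minima one-by-one via best-approximation vectors (which would force importing the theory developed later in the paper), the clean argument uses only the two-slopes rigidity together with Minkowski's upper bound to produce the dichotomy, with the rational case handled by the pigeonhole argument on the bounded sequence $(q_t)$.
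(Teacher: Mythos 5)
Your proposal is correct. The overall architecture is the same as the paper's: you write $W_\bx$ as an infimum of V-shaped functions $f_v$ of slopes $-d$ and $+1$, use Minkowski's bound to reduce to a locally finite minimum, and conclude continuity and piecewise linearity with the two prescribed slopes. The one place where you diverge from the paper's argument is the irrational case. The paper argues directly: for $\bx\notin\Q^d$, every $f_v$ in the local finite minimizing set tends to $+\infty$ (since each $\|q\bx-\bp\|>0$), contradicting the Minkowski upper bound, so the minimizing set must change on every ray $[\tau,\infty)$, producing infinitely many critical points. You instead argue by contrapositive: finitely many local minima together with the two-slopes alternation forces $W_\bx$ to be eventually linear; Minkowski eliminates slope $+1$; slope $-d$ pins $|q_t|$ to a constant integer, and a pigeonhole argument then recovers $\bx\in\Q^d$. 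Both arguments are sound and of comparable length; the paper's direct version slots more naturally into its subsequent framework where the sets $F_\tau$ become the best-approximation vectors, while your contrapositive version is perhaps more self-contained (it never needs to observe that all members of a fixed finite set have lengths tending to $\infty$, only that eventual linearity is impossible). Note that, contrary to the caveat at the end of your sketch, the paper's proof also avoids importing the best-approximation machinery—its $F_\tau$ is just the local minimizing set, essentially the same finiteness reduction you perform via Minkowski.
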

\begin{proof}
For any $v\neq0$ the function $t\to\log\|g_tv\|'$ is a continuous, piecewise 
  linear function with at most one critical point.  
Its derivative is defined everywhere except at the critical point and is either 
  equal to $-d$ or $+1$.  
For each $\tau\in\R$, there is a finite set $F_\tau\subset h_\bx\Z^{d+1}$ such 
  that $W_\bx(t)=\log\ell(g_tF_\tau)$ for all $t$ in some neighborhood of $\tau$.  
Thus, $W_\bx$ is continuous, piecewise linear with slopes $-d$ and $+1$, because 
  it satisfies the same property locally.  

Let $C$ be the set of critical points of $W_\bx$ and note that $F_\tau$ can be 
  chosen so that it is constant on each connected component of $\R\setminus C$.  
If $\bx\not\in\Q^d$ then $\ell(g_tF_\tau)\to\infty$ as $t\to\infty$ whereas 
  Minkowski's theorem implies $\ell(g_th_\bx\Z^{d+1})$ is bounded above for all $t$.  
Hence, $F_t\neq F_\tau$ for some $t>\tau$ so that $C\cap[\tau,t]\neq\zip$ 
  and since $\tau$ can be chosen arbitrarily large, it follows that $W_\bx$ 
  has infinitely many local minima.  
If $\bx\in\Q^d$ then $\bx=\frac{\bp}{q}$ for some $\bp\in\Z^d,q\in\Z$ 
  such that $v=(\bp,q)$ satisfies $\gcd(v)=1$.  
Observe that we can take $F_\tau=\{h_\bx v\}$ for all sufficiently large $\tau$.  
Thus, $C$ is bounded, hence finite, and in particular $W_\bx$ has at most 
  finitely many local minima.  
\end{proof}

\begin{definition}
Let $v$ be a vector in $$Q:=\{(\bp,q)\in\Z^{d+1}:\gcd(\bp,q)=1,q>0\}$$ and 
  $\tau$ a local minimum time of the function $W_\bx$.  
We shall say ``$v$ realises the local minimum of $W_\bx$ at time $\tau$'' if 
  $W_\bx(t)=\log\|g_th_\bx v\|'$ for all $t$ in some neighborhood of $\tau$.  
The set of vectors in $Q$ that realise \emph{some} local minimum of $W_\bx$ 
  will be denoted by $$\Sigma(\bx).$$  
By convention, $\tau=+\infty$ is considered a local minimum time of $W_\bx$.  
\end{definition}

\begin{notation}
Given $\bx\in\R^d$ and $v=(\bp,q)\in Q$ we let 
\begin{equation}\label{def:hor:|.|}
    \hor_\bx(v):=\|q\bx-\bp\|\qquad\text{and}\qquad |v|:=q.  
\end{equation}
\end{notation}

\begin{lemma}\label{lem:realise}
Let $v\in Q$.  Then $v\in\Sigma(\bx)$ if and only if for any $u\in Q$ 
\begin{enumerate}
  \item[(i)]  $|u|<|v|$ implies $\hor_\bx(u)>\hor_\bx(v)$, and 
  \item[(ii)] $|u|=|v|$ implies $\hor_\bx(u)\ge\hor_\bx(v)$.  
\end{enumerate}
\end{lemma}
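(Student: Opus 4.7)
The plan is to translate $v\in\Sigma(\bx)$ into an explicit comparison of piecewise linear functions. For $u=(\bp,q)\in\Z^{d+1}$, direct computation of $g_th_\bx u=(e^t(\bp-q\bx),e^{-dt}q)$ gives
\[
f_u(t):=\log\|g_th_\bx u\|'=\max\bigl(t+\log\|\bp-q\bx\|,\,-dt+\log|q|\bigr).
\]
For $u\in Q$ this is piecewise linear with slope $-d$ on $(-\infty,t_u]$ and $+1$ on $[t_u,\infty)$, where $t_u:=(d+1)^{-1}\log(|u|/\hor_\bx(u))$, with unique minimum at $t_u$. Since $W_\bx(t)=\min_{u\in\Z^{d+1}\setminus\{0\}}f_u(t)$, the statement $v\in\Sigma(\bx)$ is equivalent to $f_v\le f_u$ on some neighborhood of $t_v$ for every nonzero integer $u$.

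For the $(\Rightarrow)$ direction I would fix $u\in Q$ with $|u|\le|v|$ and evaluate at $t_v$. Using $f_v(t_v)=-dt_v+\log|v|=t_v+\log\hor_\bx(v)$, the inequality $f_u(t_v)\ge f_v(t_v)$ translates into the disjunction ``$|u|\ge|v|$ or $\hor_\bx(u)\ge\hor_\bx(v)$''; when $|u|<|v|$ only the second alternative can hold, giving $\hor_\bx(u)\ge\hor_\bx(v)$. Strictness in (i) is then forced by noting that equality $\hor_\bx(u)=\hor_\bx(v)$ combined with $|u|<|v|$ produces a slope mismatch at $t_v$ that drives $f_u-f_v$ negative just to one side of $t_v$, contradicting $v\in\Sigma(\bx)$. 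The analogous argument with $|u|=|v|$ yields (ii) in the non-strict form, since there the borderline $\hor_\bx(u)=\hor_\bx(v)$ corresponds exactly to $f_u\equiv f_v$.

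For the $(\Leftarrow)$ direction I would verify $f_v\le f_u$ near $t_v$ for every nonzero integer $u\in\Z^{d+1}$. Up to positive integer multiples (which only add a nonnegative constant to $f_u$) every such $u$ is either of the form $\pm u'$ with $u'\in Q$, or $(\bp',0)$ with $\bp'\neq 0$. In the first case a short case split on $|u'|$ vs.\ $|v|$, using (i) and (ii), gives $f_{u'}(t_v)\ge f_v(t_v)$ with strict inequality unless $u'=v$, and the explicit slope data extend this to a neighborhood. In the second case $f_u(t)=t+\log\|\bp'\|$, and one needs $\|\bp'\|>\hor_\bx(v)$; applying (i) to $(\bp^*,1)\in Q$ where $\bp^*$ is a nearest integer to $\bx$ bounds $\hor_\bx(v)$ by $\|\bx-\bp^*\|$, which in the sup norm is at most $\tfrac12<1\le\|\bp'\|$ (the tiny boundary case $|v|=1$ is covered analogously by (ii)). The main obstacle is the careful bookkeeping of strict versus weak inequalities on each side of $t_v$, which I would organise by doing the coefficient comparison directly in the max-of-two-affines expressions for $f_u$ and $f_v$.
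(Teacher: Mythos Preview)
Your approach is essentially the paper's, carried out more computationally: both reduce the question to comparing $f_v$ against every other $f_u$ near the minimum $t_v$, with the paper packaging the converse as a geometric box argument (the nonzero lattice points in the closed $\|\cdot\|'$-ball of radius $\eps$ are forced onto $\partial B\times\partial I$, and then the box is enlarged slightly to get an interval of times) while you work directly with the max-of-two-affines formulas and slope comparisons. Your explicit handling of the $q=0$ vectors is a detail the paper's proof passes over silently; note, however, that your bound $\|\bp'\|>\hor_\bx(v)$ is argued only for the sup norm, whereas the lemma is stated for an arbitrary norm $\|\cdot\|$ on $\R^d$, so that step would need a norm-independent justification (or an added hypothesis) to match the paper's stated generality.
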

\begin{proof}
Suppose $v\in\Sigma(\bx)$, so that it realises a local minimum of $W_\bx$.  
If $u\in Q$ does not satisfy (i) then $\|g_th_\bx u\|'\le\|g_th_\bx v\|'$ 
  for all $t$, with strict inequality for at least some $t$, which implies 
  that $v$ cannot realise a local minimum of $W_\bx$, a contradiction.  
Therefore, (i) holds for any $u\in Q$.  The argument that (ii) holds 
  as well is similar.  
Conversely, suppose (i) and (ii) hold for all $u\in Q$.  
Let $(\tau,\eps)$ be the unique local minimum of $t\to\|g_th_\bx v\|'$.  
Let $B'$ be the closed $\|\cdot\|'$-ball of radius $\eps$ at the origin.  
Write it as $B\times I$ where $B\subset\R^d$ and $I\subset\R$.  
Let $Z=B'\cap g_th_\bx\Z^{d+1}$ and $Z^*=Z\setminus\{0\}$.  
Then (i) and (ii) imply $Z^*\subset\partial B\times\partial I$, which 
  implies $v$ realises the shortest nonzero vector at time $\tau$.  
(By this we mean $g_\tau h_\bx v$ is the shortest nonzero vector in 
  $g_\tau h_\bx\Z^{d+1}$.)  
Since there is exists a slightly larger ball $B''$ containing $B'$ 
  such that $B''\cap g_th_\bx\Z^{d+1}=Z$, it follows that $v$ realises 
  the shortest nonzero vector for an interval of $t$ about $\tau$.  
Thus, $v$ realises a local minimum of $W_\bx$ and $v\in\Sigma(\bx)$.  
\end{proof}

The next lemma was proved in \cite{JL2} for the case $d=2$.  
\begin{lemma}\label{lem:consec}
If $u,v\in\Sigma(\bx)$ realise a consecutive pair of local minima of 
  the function $W_\bx$, then they span a \emph{primitive} two-dimensional 
  sublattice of $\Z^{d+1}$, i.e. $\Z^{d+1}\cap(\R u + \R v)=\Z u + \Z v$.  
\end{lemma}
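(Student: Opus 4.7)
The plan is to argue by contradiction: suppose the index $c := [L : \Z u + \Z v] \ge 2$, where $L := \Z^{d+1} \cap (\R u + \R v)$, and then exhibit a vector in $\Sigma(\bx)$ whose local minimum time of $W_\bx$ lies strictly between $\tau_1$ and $\tau_2$. The immediate observation is that $|u| < |v|$ (a standard consequence of $\tau_1 < \tau_2$ and Lemma~\ref{lem:realise}), so in particular $u, v$ are linearly independent and $L$ has rank $2$.

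Since $L$ is a saturated sublattice of $\Z^{d+1}$ and $v$ is primitive, I would extend $\{v\}$ to a $\Z$-basis $\{e, v\}$ of $L$ and write $u = ce + mv$ with $\gcd(c, m) = 1$ (by primitivity of $u$). The basis change $e \mapsto e + kv$ sends $m \mapsto m - kc$, so I can arrange $m \in \{-(c-1), \ldots, -1\}$; this is possible because $m \not\equiv 0 \pmod{c}$ (otherwise $c \mid m$ would force $c = 1$). With this choice, $e$ is primitive in $\Z^{d+1}$ (as part of a basis of a saturated sublattice) and $|e| = (|u| + |m||v|)/c > 0$, so $e \in Q$.

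The key estimates combine the subadditivity and positive homogeneity of the seminorm $\hor_\bx$ with Lemma~\ref{lem:realise}. Applying the triangle inequality to $ce = u - mv$ gives
\begin{equation*}
c\,\hor_\bx(e) \le \hor_\bx(u) + |m|\hor_\bx(v) < (1+|m|)\hor_\bx(u) \le c\,\hor_\bx(u),
\end{equation*}
where the strict middle inequality uses $\hor_\bx(u) > \hor_\bx(v)$ (from Lemma~\ref{lem:realise}(i) applied to $u$, since $|u| < |v|$) and the last uses $|m| \le c - 1$. Hence $\hor_\bx(e) < \hor_\bx(u)$; the contrapositive of Lemma~\ref{lem:realise} applied to $u$ then forces $|e| > |u|$. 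An analogous elementary estimate (using $|m| \le c-1$ and $|u| < |v|$) gives $|e| < |v|$, and Lemma~\ref{lem:realise}(i) applied to $v$ then yields $\hor_\bx(e) > \hor_\bx(v)$. To summarise, $|u| < |e| < |v|$ and $\hor_\bx(v) < \hor_\bx(e) < \hor_\bx(u)$.

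Finally, let $\mathcal{W} := \{w \in Q : |u| < |w| \le |e|,\ \hor_\bx(w) \le \hor_\bx(e)\}$, which is finite and nonempty, and pick $w^* \in \mathcal{W}$ minimising $|w|$ first, with ties broken by $\hor_\bx(w)$. A routine verification using Lemma~\ref{lem:realise} shows $w^* \in \Sigma(\bx)$: any $w' \in Q$ with $|w'| \le |u|$ satisfies $\hor_\bx(w') \ge \hor_\bx(u) > \hor_\bx(w^*)$ (by Lemma~\ref{lem:realise} applied to $u$), while any $w'$ with $|u| < |w'| \le |w^*|$ and $\hor_\bx(w') \le \hor_\bx(w^*)$ would lie in $\mathcal{W}$ and violate the minimality of $w^*$. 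The four strict inequalities $|u| < |w^*| < |v|$ and $\hor_\bx(v) < \hor_\bx(w^*) < \hor_\bx(u)$ then place the critical time of $w^*$'s individual length function---which is precisely the local minimum time of $W_\bx$ realised by $w^*$---strictly between $\tau_1$ and $\tau_2$, the desired contradiction. The main obstacle is the delicate choice of $m$ within its residue class so that the seminorm estimate and Lemma~\ref{lem:realise} together pin down all four inequalities separating $e$ from both $u$ and $v$ simultaneously.
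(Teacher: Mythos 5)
Your proof is correct, but it takes a genuinely different route from the paper. The paper argues geometrically: the parallelogram $\conv(\{\pm u,\pm v\})$, pushed forward by $h_\bx$, lies inside the critical box $C(r,h)$ with $r=\hor_\bx(u)$, $h=|v|$ that corresponds to the local maximum of $W_\bx$ between the two consecutive minima, and meets $\partial C(r,h)$ only at the four points $h_\bx(\pm u), h_\bx(\pm v)$; since the interior of $C(r,h)$ contains no point of $h_\bx(\Z^{d+1}\setminus\{0\})$, one gets $\conv(F)\cap\Z^{d+1}=F\cup\{0\}$, which is equivalent to primitivity of $\Z u+\Z v$. Your argument instead proceeds by contradiction: assuming $[L:\Z u+\Z v]=c\ge2$ with $L=\Z^{d+1}\cap\Span\{u,v\}$, you normalise a $\Z$-basis $\{e,v\}$ of $L$ so that $u=ce+mv$ with $m\in\{-(c-1),\dots,-1\}$, exploit subadditivity and positive homogeneity of $\hor_\bx$ together with Lemma~\ref{lem:realise} to obtain the sandwich $|u|<|e|<|v|$ and $\hor_\bx(v)<\hor_\bx(e)<\hor_\bx(u)$, and then extract via the minimisation step a genuine element $w^*\in\Sigma(\bx)$ satisfying the same inequalities, whose critical time for $t\mapsto\|g_th_\bx w^*\|'$ therefore falls strictly between those of $u$ and $v$, contradicting consecutiveness. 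Both proofs are valid; the paper's is shorter and tied conceptually to the box dictionary for $\cC(\bx)$, while yours is more elementary and fully explicit, avoiding any convexity argument at the cost of needing the auxiliary step that upgrades $e$ to an actual best approximation $w^*$ (since $e$ itself need not belong to $\Sigma(\bx)$).
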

\begin{proof}
Let $F=\{\pm u,\pm v\}$ and denote its convex hull by $\conv(F)$.  
Let $\cC(\bx)$ be the collection of subsets of $\R^{d+1}$ of the form 
  $$C(r,h)=\{(\ba,b):\|\ba\|\le r,|b|\le h\} \qquad r>0,\;h>0$$ that 
  intersect $h_\bx(\Z^{d+1}\setminus\{0\})$ on the boundary but not in 
  the interior.  
Observe that the set of maximal (resp. minimal) elements of $\cC(\bx)$, 
  partially ordered by inclusion, is in one-to-one correspondence with 
  the set of local maxima (resp. local minima) of the function $W_\bx$.  
Since $u$ and $v$ realise distinct local minima, $|u|\neq|v|$.  
Hence, without loss of generality, assume that $|u|<|v|$.  
The element of $\cC(\bx)$ corresponding to the unique local maximum of 
  $W_\bx$ between the consecutive pair of local minima determined by $u$ 
  and $v$ is given by the parameters $r=\hor_\bx(u)$ and $h=|v|$.  
Note that $\conv(h_\bx F)$ is a subset of $C(r,h)$ and intersects the 
  boundary of $C(r,h)$ in the four points of $h_\bx F$.  
Therefore, $\conv(F)\cap\Z^{d+1}=F\cup\{0\}$, which is equivalent to 
  $u,v$ spanning a primitive two-dimensional sublattice of $\Z^{d+1}$.  
\end{proof}

Recall that $\frac{\bp}{q}\in\Q^d$ is a \emph{best approximation} to $\bx$ if 
\begin{enumerate}
  \item[(i)]  $\|q\bx-\bp\| < \|n\bx-\bm\|$ for any $(\bm,n)\in\Z^{d+1}$, $0<n<q$, 
  \item[(ii)] $\|q\bx-\bp\|\le\|q\bx-\bp'\|$ for any $\bp'\in\Z^d$.  
\end{enumerate}
The study of best approximations is central to the theory of simultaneous 
  Diophantine approximation and has a long history going back to Lagrange, 
  who showed that the sequence of best approximations in the case $d=1$ 
  are enumerated by the convergents in the continued fraction expansion.  
The literature on this subject is extensive.  We refer the reader to the 
  papers \cite{JL1} and \cite{JL3}, which contain many further references.  

Lemma~\ref{lem:realise} gives a simple dynamical interpretation for 
  the sequence of best approximations, ordered by increasing height: 
  \emph{they correspond precisely to the sequence of vectors that 
  realise the local minima of $W_\bx$}.  
\begin{notation}
For any $v\in Q$ let 
\begin{equation}\label{def:dot:v}
    \Dot v:=\frac{\bp}{q}\in\Q \quad\text{where}\quad v=(\bp,q)\in Q.  
\end{equation}
\end{notation}

We shall often ignore the distinction between a vector $v\in Q$ and 
  the rational $\Dot v$ corresponding to it.  
Thus, we may refer to a sequence $(v_j)$ in $Q$ as the sequence of 
  best approximations to $\bx$, by which we mean for every $j$ the 
  vector $v_j$ realises the $j$th local minimum of $W_\bx$.  
This raises the issue of the uniqueness of the vector realising a 
  local minimum of $W_\bx$, or equivalently, the existence of best 
  approximations to $\bx$ of the same height.  
In the case $d=1$, this can only happen if $\bx$ is a half integer.  
In general, there are at most finitely many local minima that can 
  be realised by multiple vectors in $Q$.  
(See the remark following Theorem~\ref{thm:diam} below.)  
When referring to ``the sequence of best approximations to $\bx$'' 
  we really mean \emph{any} sequence $(v_j)$ such that the $j$th 
  local minimum of $W_\bx$ is realised by $v_j$.

\subsection{Two-dimensional sublattices}
\begin{definition}\label{def:script:L(v)}
For any $v\in Q$, we denote by $$\cL(v)$$ the set of primitive 
  two-dimensional sublattices of $\Z^{d+1}$ containing $v$.  
\end{definition}

There is a natural way to view $\cL(v)$ as the set of primitive 
  elements in some $d$-dimensional lattice.  
Indeed, consider the exact sequence of real vector spaces 
\begin{equation}\label{exact:seq}
  \R \longrightarrow \Wedge^1\R^{d+1} \stackrel{\varphi}{\longrightarrow} 
     \Wedge^2\R^{d+1} \longrightarrow \Wedge^3\R^{d+1}
\end{equation}
  where each map is given by exterior multiplication by $v$.  
Since $v\neq0$, the kernel of $\varphi$ is one-dimensional, 
  from which it follows that the image of $\varphi$, denoted 
  $$\cL_\R(v),$$ is a real vector space of dimension $d$.  
Similarly, we have an exact sequence of free $\Z$-modules 
\begin{equation*}
  \Z \longrightarrow \Wedge^1\Z^{d+1} \longrightarrow 
     \Wedge^2\Z^{d+1} \longrightarrow \Wedge^3\Z^{d+1}
\end{equation*}
  where the image of second map is a free $\Z$-module of rank $d$, 
  denoted $$\cL_\Z(v).$$  
It embeds $\cL_\Z(v)$ as a $d$-dimensional lattice in $\cL_\R(v)$.  
The set of \emph{oriented}, primitive, two-dimensional sublattices 
  of $\Z^{d+1}$ that contain $v$ is given by 
    $$\cL_+(v) = \{u\wedge v : u,v\in Q, \Z u+\Z v\in\cL(v)\}.$$  
The next lemma shows that $\cL_+(v)$ coincides with the set of 
  primitive elements of lattice $\cL_\Z(v)$.  

\begin{lemma}\label{lem:primitive}
Let $L=\Z u+\Z v$ be a two dimensional sublattice of $\Z^{d+1}$.  
Then $L$ is primitive if and only if $u\wedge v$ is primitive as an 
  element of $\cL_\Z(v)$, i.e. $u\wedge v\neq dw$ for any $d\ge2$ 
  and $w\in\cL_\Z(v)$.  
\end{lemma}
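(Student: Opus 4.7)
\medskip
\noindent\textbf{Proof proposal.} The plan is to handle the two directions separately, using the description of $\cL_\Z(v)$ as the image of the exterior-multiplication map $\varphi\colon\Wedge^1\Z^{d+1}\to\Wedge^2\Z^{d+1}$, $\alpha\mapsto\alpha\wedge v$, together with the fact that $v\in Q$ is primitive in $\Z^{d+1}$ (since $\gcd(v)=1$).

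For the forward direction, I would argue by contrapositive. Suppose $u\wedge v=d\,w$ with $d\ge2$ and $w\in\cL_\Z(v)$. By definition of $\cL_\Z(v)$ we can write $w=u'\wedge v$ for some $u'\in\Z^{d+1}$, so $(u-du')\wedge v=0$. Hence $u-du'$ is a real multiple of $v$, and since $v$ is primitive in $\Z^{d+1}$ and $u-du'\in\Z^{d+1}$, we in fact get $u=du'+kv$ for some $k\in\Z$. The identity also forces $u'\in\R u+\R v$. If $L=\Z u+\Z v$ were primitive we would have $u'\in L$, so $u'=au+bv$ with $a,b\in\Z$, and substituting back gives $(1-da)u=(db+k)v$; independence of $u,v$ then yields $da=1$, contradicting $d\ge2$. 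Therefore $L$ is not primitive.

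For the converse, assume $L$ is not primitive, and let $L^*:=\Z^{d+1}\cap(\R u+\R v)$ be its primitivization, a rank-two primitive sublattice containing $L$ with finite index $d=[L^*:L]\ge2$. Choose a $\Z$-basis $u',v'$ of $L^*$. Expressing $u,v$ in this basis and taking wedge products yields $u\wedge v=\pm d\,(u'\wedge v')$ in $\Wedge^2\Z^{d+1}$. The key step is to exhibit $u'\wedge v'$ as an element of $\cL_\Z(v)$. Since $v\in L^*$ and $v$ is primitive in $\Z^{d+1}$ (hence also primitive in the sublattice $L^*$), write $v=au'+bv'$ with $\gcd(a,b)=1$, choose integers $s,t$ with $sa+tb=1$ by B\'ezout, and set $\alpha:=tu'-sv'\in\Z^{d+1}$. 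A direct expansion gives $\alpha\wedge v=(sa+tb)(u'\wedge v')=u'\wedge v'$, so $u'\wedge v'\in\cL_\Z(v)$, and therefore $u\wedge v=\pm d\,(\alpha\wedge v)$ is not primitive in $\cL_\Z(v)$.

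The main obstacle is the converse: one must verify that a generator of the rank-one subgroup $\Z(u'\wedge v')$ actually lies inside $\cL_\Z(v)$, not merely inside $\cL_\R(v)$. This is precisely where the primitivity of $v$ in $\Z^{d+1}$ enters, via the B\'ezout step above; without it, the coordinates $(a,b)$ of $v$ in the basis of $L^*$ could have a nontrivial common factor and the witness $\alpha$ would fail to be integral.
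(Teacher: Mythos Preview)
Your proof is correct and follows essentially the same approach as the paper's. The only difference is in the converse: the paper observes directly that since $v$ is primitive it extends to a $\Z$-basis $\{u',v\}$ of the primitivization $L'=\Z^{d+1}\cap\Span L$, which immediately gives $u'\wedge v\in\cL_\Z(v)$ and $u\wedge v=a\,(u'\wedge v)$ with $|a|=[L':L]\ge2$, whereas you take an arbitrary basis of $L^*$ and recover the same witness via B\'ezout---but this is the same idea, just made explicit.
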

\begin{proof}
Let $L'=\Z^{d+1}\cap\Span L$ so that $L$ is primitive iff $L'=L$.  
Suppose $u\wedge v=dw$ for some $d\ge2$ and $w\in\cL_\Z(v)$.  
Then $w=u'\wedge v$ for some $u'\in\Z^{d+1}$.  
Since $(du'-u)\wedge v=0$, we have $du'=u+cv$ for some $c\in\Z$.  
Since $d\ge2$, $u'\notin L$.  Hence, $L'\neq L$.  
Conversely, suppose $L'\neq L$.  
Choose $u'\in Q$ so that $L'=\Z u'+\Z v$.  
Since $L\subset L'$, we may write $u=au'+bv$ for some $a,b\in\Z$.  
Then $u\wedge v=au'\wedge v$.  Since the index of $L$ in $L'$ is 
  given by $|a|$, we have $u\wedge v=dw$ where $d=|a|$ and 
  $w=\pm u'\wedge v\in\cL_\Z(v)$.  
Since $L\neq L'$, $d\ge2$.  
\end{proof}

Identifying $\Wedge^1\R^{d+1}$ with $\R^{d+1}$, we note that the kernel 
  of $\varphi$ is given by the one-dimensional subspace $\R v$.  
Thus, $\varphi$ induces an isomorphism of $\cL_\R(v)$ with the space 
  of cosets of $\R v$ in $\R^{d+1}$.  The elements in $\cL_\Z(v)$ 
  correspond to cosets that have nonempty intersection with $\Z^{d+1}$.  
Let $E_+$ be the expanding eigenspace for the action of $g_1$.  
Then $$\R^{d+1}=E_+\oplus\R v$$ and the map that sends a coset of $\R v$ 
  to the point of intersection with $E_+$ induces an isomorphism of 
  of $\cL_\R(v)$ with $E_+$.  
The norm $\|\cdot\|$ on $\R^d$, which is naturally identified with 
  $E_+$, induces a norm on $\cL_\R(v)$, which we shall denote by 
  $$\|\cdot\|_{\cL(v)}.$$  

Let $E_-$ be the contracting eigenspace for $g_1$.  Since $\dim E_-=1$, 
  the $k$th exterior power decomposes into two eigenspaces for $g_t$ 
  $$\Wedge^k\R^{d+1} = E_+^k \oplus E_-^k$$ where $E_+^k$ and $E_-^k$ 
  are naturally identified with $\Wedge^kE_+$ and $\Wedge^{k-1}E_+$, 
  respectively.  
Let $e_1,\dots,e_{d+1}$ be the standard basis vectors for $\R^{d+1}$.  
The operation of wedging with $e_{d+1}$ induces an isomorphism between 
  $E_+^k$ and $E_-^{k+1}$; in particular, we have an isomorphism of 
  $E_-^2$ with $E_+^1$, which is naturally identified with $\R^d$ 
  through the isomorphisms with $E_+$.  
The norm $\|\cdot\|$ on $\R^d$ induces a norm on $E_-^2$, which may be 
  extended to a seminorm on all of $\Wedge^2\R^{d+1}$ by defining the 
  (semi)norm of an element to be the norm of the component in $E_-^2$.  
This seminorm will be denoted by $$|\cdot|.$$  

Given $L\in\cL(v)$ we may form an element $u\wedge v\in\cL_\Z(v)$ 
  by choosing any pair in $Q$ such that $L=\Z u+\Z v$.  
This element is well-defined up to sign, so it makes sense to talk 
  about the norm of $L$ an element in $\cL_\R(v)$ and also as an 
  element of $\Wedge^2\R^{d+1}$; denote these, respectively, by 
  $$\|L\|_{\cL(v)}\quad\text{and}\quad |L|.$$  
There is a simple relation between these norms.  
Note that the action of $h_\bx$ on an element in $\Wedge^2\R^{d+1}$ 
  preserves the component in $E^2_-$.  
Now let $u',v'$ be the respective images of $u,v$ under $h_{\Dot v}$.  
Then $u'\wedge v' = |v| u'_+\wedge e_{d+1}$ where $u'_+$ is the 
  component of $u'$ in $E_+$.  
Note that $u'_+$ is precisely the point where the coset of $\R v$ 
  corresponding to $L$ intersects $E_+$.  
Note also that its norm is given by $\hor_{\Dot v}(u)$.  
It follows that 
\begin{equation}\label{eq:norms}
  \|L\|_{\cL(v)} = \hor_{\Dot v}(u) = \frac{|L|}{|v|}.  
\end{equation}

The image of $\cL_\Z(v)$ under the isomorphism of $\cL_\R(v)$ 
  with $E_+$ is simply the image of $\Z^{d+1}$ under the 
  projection of $\R^{d+1}=E_+\oplus\R v$ onto $E_+$, i.e.  
  the projection along lines parallel to $v$.  
Alternatively, it can be described as the set of all 
  components in $E_+$ of vectors in $h_{\Dot v}\Z^{d+1}$.  
Its volume is given by 
  $$\vol\big(\cL_\Z(v)\big) = \frac{1}{|v|}.$$  

Since $\cL_+(v)$ is a discrete subset of a normed vector space, 
  there exists an element of minimal positive norm.  
While this element may not be unique, we shall choose one for 
  each $v\in Q$, once and for all, and denote it by $$L(v).$$  
The corresponding element in $\cL(v)$ will be denoted by the 
  same symbol.\footnote{We shall often blur the distinction 
  between elements in $\cL_+(v)$ and $\cL(v)$, leaving it to 
  the context to determine which meaning is intended.}  
Since the norm of the shortest nonzero vector in a unimodular 
  lattice in $\R^d$ is bounded above by some constant $\mu_0$ 
  (depending on $\|\cdot\|$) we have for all $v\in Q$ 
\begin{equation}\label{def:mu0}
  \frac{|L(v)|}{|v|^{1-1/d}} \le \mu_0.  
\end{equation}

Let us mention a form of (\ref{eq:norms}) that is symmetric 
  with respect to $u$ and $v$.  
Let $\dist(\cdot,\cdot)$ denote the metric on $\R^d$ induced 
  by the norm $\|\cdot\|$.  
Then $\hor_{\Dot v}(u) = |u|\dist(\Dot u,\Dot v)$ so that 
\begin{equation}\label{eq:dist}
  \dist(\Dot u,\Dot v)=\frac{|u\wedge v|}{|u||v|}.  
\end{equation}
Let us extend the notation $\Dot v,|v|$ and $\hor_\bx(v)$ 
  introduced in (\ref{def:dot:v}) and (\ref{def:hor:|.|}) 
  to the set $E_+^c=\R^{d+1}\setminus E_+$.  
Then (\ref{eq:dist}) holds for all $u,v\in E_+^c$.  
It will be convenient to allow overscripts on the arguments of 
  $\dist(\cdot,\cdot)$ to be dropped; formally, we are extending 
  $\dist:\R^d\times\R^d\to\R$ to a function that is defined on 
  $E\times E$ where $E$ is the disjoint union of $\R^d$ and $E_+^c$.  
These conventions allow for more appealing formulas such as 
  $$|u\wedge v|=|u||v|\dist(u,v),\quad \hor_\bx(v)=|v|\dist(v,\bx).$$

\subsection{Domains of approximation}
We now investigate the sets $$\Delta(v):=\{\bx\in\R^d:v\in\Sigma(\bx)\}$$ 
  for $v\in Q$.  
By Lemma~\ref{lem:realise}, $$\Delta(v) = \big(\cap_{|u|<|v|} \Delta_u(v)\big)
      \cap \big(\cap_{|u|=|v|}\overline{\Delta_u(v)}\big)$$ 
  where the sets $\Delta_u(v)$, defined only for $u\in Q\setminus\{v\}$, 
  are given by $$\Delta_u(v)=\{\bx\in\R^d: \hor_\bx(u)>\hor_\bx(v) \}.$$  
We note that $\Delta_u(v)$ is bounded iff $|u|<|v|$.  

\begin{lemma}\label{lem:dist:w}
For any (distinct) $u,v\in Q$ with $|u|\le|v|$ we have 
  $$\dist(\bx,u) > \dist(u+v,u)\quad\forall\bx\in\Delta_u(v).$$  
Here, $\dist(u+v,\cdot)$ means $\dist(\Dot w,\cdot)$ where $w=u+v$.  
\end{lemma}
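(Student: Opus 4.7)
The plan is to turn the defining inequality of $\Delta_u(v)$ into a statement about the distances $\dist(\bx,u)$ and $\dist(\bx,v)$, then use the triangle inequality together with an explicit formula for $\dist(u+v,u)$ derived from (\ref{eq:dist}).

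First I would unwind definitions. By the convention $\hor_\bx(w)=|w|\dist(\bx,w)$ extended to $E_+^c$, the hypothesis $\bx\in\Delta_u(v)$ is simply
\begin{equation*}
  |u|\,\dist(\bx,u)\;>\;|v|\,\dist(\bx,v).
\end{equation*}

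Next I would compute $\dist(u+v,u)$ directly. Since $u,v\in Q$ both have positive last coordinate, $u+v\in E_+^c$ with $|u+v|=|u|+|v|$. Applying (\ref{eq:dist}) to the pair $u+v,u$, and using $(u+v)\wedge u=v\wedge u$, one obtains
\begin{equation*}
  \dist(u+v,u)\;=\;\frac{|(u+v)\wedge u|}{|u+v|\,|u|}\;=\;\frac{|v\wedge u|}{(|u|+|v|)\,|u|}\;=\;\frac{|v|\,\dist(u,v)}{|u|+|v|}.
\end{equation*}

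Finally I would combine these via the triangle inequality. From $\dist(\bx,v)\ge \dist(u,v)-\dist(\bx,u)$ and the reformulated hypothesis,
\begin{equation*}
  |u|\,\dist(\bx,u)\;>\;|v|\,\dist(\bx,v)\;\ge\;|v|\,\dist(u,v)\,-\,|v|\,\dist(\bx,u),
\end{equation*}
so $(|u|+|v|)\,\dist(\bx,u)>|v|\,\dist(u,v)$, which by the formula for $\dist(u+v,u)$ is exactly the desired inequality $\dist(\bx,u)>\dist(u+v,u)$.

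There is no real obstacle here; the only point that might require a moment's thought is verifying that the formula (\ref{eq:dist}) applies to $u+v$ even though $u+v$ need not lie in $Q$, which is handled by the earlier extension of $\Dot w,|w|,\dist(\cdot,\cdot)$ to $E_+^c$. I would also note in passing that the hypothesis $|u|\le|v|$ is not used in this argument itself; it is a contextual assumption under which $\Delta_u(v)$ is naturally bounded and around $\Dot u$.
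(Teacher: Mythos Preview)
Your proof is correct and follows essentially the same route as the paper: both rewrite $\bx\in\Delta_u(v)$ as a distance inequality, compute $\dist(u+v,u)$ in terms of $\dist(u,v)$, and finish with the triangle inequality $\dist(u,v)\le\dist(u,\bx)+\dist(v,\bx)$. The only cosmetic difference is that the paper obtains $\dist(u+v,u)=\frac{|v|}{|u|+|v|}\dist(u,v)$ via the collinearity relation $\dist(u,v)=\dist(u,w)+\dist(w,v)$ together with $\dist(v,w)=\lambda\dist(u,w)$, whereas you get it directly from $(u+v)\wedge u=v\wedge u$; your observation that $|u|\le|v|$ is not actually used is also correct.
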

\begin{proof}
By definition, $\bx\in\Delta_u(v)$ iff $$\dist(v,\bx)<\lambda\dist(u,\bx)
   \quad\text{where}\quad \lambda=\frac{|u|}{|v|}\le1.$$  
Let $w=u+v$.  Since $\dist(u,v)=\dist(u,w) + \dist(w,v)$ and 
  $$\dist(v,w) = \frac{|v\wedge w|}{|v||w|} = \frac{|u|}{|v|} 
       \left(\frac{|u\wedge w|}{|u||w|}\right) = \lambda\dist(u,w)$$ 
  the triangle inequality implies 
  $$(1+\lambda)\dist(u,w)\le\dist(u,\bx)+\dist(v,\bx)<(1+\lambda)\dist(u,\bx)$$
  and the lemma follows.  
\end{proof}

\begin{remark}
It follows easily from Lemma~\ref{lem:dist:w} that the infimum of 
  $$\{\dist(u,\bx):\bx\in\overline{\Delta_u(v)}\}$$ is realised at the 
  rational point corresponding to $u+v$.  It can similarly be shown that 
  the supremum is realised by the rational corresponding to $v-u$.  
In the case when $\|\cdot\|$ is the Euclidean norm, the set $\Delta_u(v)$ 
  is the open Euclidean ball having these points as antipodal points.  
\end{remark}

\begin{lemma}\label{lem:u+-}
Given $L\in\cL_+(v)$ there are unique vectors $u_\pm\in Q$ satisfying 
  $L=u_\pm\wedge v$ and $|u_\pm|\le|v|$.  
Moreover, $|u_+|<|v|$ iff $v=u_++u_-$ iff $|u_-|<|v|$.  
Similarly, $|u_+|=|v|$ iff $2v=u_++u_-$ iff $|u_-|=|v|$.  
\end{lemma}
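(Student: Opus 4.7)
The plan is to parameterize all $u \in \Z^{d+1}$ with $u \wedge v \in \{L, -L\}$ and use the $q$-coordinate to extract the unique representative in $Q$ with $|u| \le |v|$ for each sign. Since $L \in \cL_+(v)$, I pick any $u_0 \in Q$ with $u_0 \wedge v = L$. If $w \in \Z^{d+1}$ satisfies $w \wedge v = \pm L$, then $(w \mp u_0) \wedge v = 0$, forcing $w = \pm u_0 + \lambda v$ for some $\lambda \in \R$; primitivity of $v$ (i.e.\ $\gcd(v) = 1$) upgrades this to $\lambda \in \Z$. Conversely, every nonzero vector of the form $\pm u_0 + kv$ (nonzero because $u_0$ and $v$ are linearly independent) is primitive in $\Z^{d+1}$ by Lemma~\ref{lem:primitive}: if such a vector equals $mw$ with $m \ge 2$ and $w \in \Z^{d+1}$, then $w \in \Span L \cap \Z^{d+1} = L$, so $w = au_0 + bv$, and comparing coefficients in the basis $\{u_0, v\}$ forces $ma = \pm 1$, which is impossible.

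Next, the condition $u_0 + kv \in Q$ reduces to $|u_0| + k|v| > 0$, and $|u_0 + kv| \le |v|$ to $k \le 1 - |u_0|/|v|$, together cutting out a half-open interval of length exactly one on the $k$-axis and hence containing a unique integer $k_+$; set $u_+ := u_0 + k_+ v$, for which $u_+ \wedge v = L$. An analogous analysis for $-u_0 + kv$ produces a unique $k_-$ and $u_- := -u_0 + k_- v$, for which $u_- \wedge v = -L$. This establishes existence and uniqueness of the pair $u_\pm$, interpreting the equation $L = u_\pm \wedge v$ as identifying $L$ with its underlying unoriented sublattice.

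For the summation relations, I compute $(u_+ + u_-) \wedge v = L - L = 0$, so $u_+ + u_- = cv$ for some $c \in \R$, and primitivity of $v$ forces $c \in \Z$. Equating $q$-coordinates gives $|u_+| + |u_-| = c|v|$, and since $0 < |u_\pm| \le |v|$ the only possibilities are $c \in \{1, 2\}$. The case $c = 1$ is exactly the relation $v = u_+ + u_-$, which together with $|u_\pm| \le |v|$ and $|u_\mp| > 0$ is equivalent to $|u_+| < |v|$ and to $|u_-| < |v|$; the case $c = 2$ is exactly $2v = u_+ + u_-$ and forces $|u_+| = |u_-| = |v|$. This gives the claimed equivalences. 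No step is genuinely obstructive; the only fussy point is the primitivity of the representatives $\pm u_0 + kv$, which Lemma~\ref{lem:primitive} handles cleanly.
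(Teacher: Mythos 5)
Your proof is correct and follows essentially the same route as the paper's, which simply declares existence and uniqueness ``clear'' and then checks that $v-u_+$ (resp.\ $2v-u_+$) meets the defining conditions for $u_-$; you have fleshed out the ``clear'' step via the standard parametrization $\pm u_0 + kv$ and the half-open length-one interval on the $k$-axis, and you organized the iff statements around the identity $u_+ + u_- = cv$ with $c\in\{1,2\}$, which is a clean way to package the same dichotomy. One small citation note: the primitivity of $\pm u_0 + kv$ as a vector in $\Z^{d+1}$ follows directly from $L$ being a primitive sublattice (so $\Span L\cap\Z^{d+1}=L$), which is built into the definition of $\cL_+(v)$, rather than from Lemma~\ref{lem:primitive} itself (which concerns primitivity of $u\wedge v$ in $\cL_\Z(v)$); the argument you give is the right one, only the label is off.
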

\begin{proof}
Existence and unique of $u_\pm$ is clear.  If $|u_+|<|v|$ then $v-u_+$ 
  satisfies the conditions defining $u_-$ so that $v-u_+=u_-$.  
Similarly, if $|u_+|=|v|$ then $2v-u_+=u_-$.  
\end{proof}

Let $B(\bx,r)\subset\R^d$ denote the open ball at $\bx$ of radius $r$.  
\begin{theorem}\label{thm:diam}
Given $v\in Q$ with $|v|>1$, let $r=\tfrac{|L(v)|}{|v|^2}$.  Then 
\begin{equation}\label{ieq:diam}
    B(\Dot v,\frac{r}{2})\subset\Delta(v)\subset B(\Dot v,2r).  
\end{equation}
\end{theorem}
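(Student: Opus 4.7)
My approach is to read off both inclusions from the characterization of $\Sigma(\bx)$ in Lemma~\ref{lem:realise}, using the identity $\dist(u,v) = |u\wedge v|/(|u||v|)$ from (\ref{eq:dist}) together with the minimality of $|L(v)|$ on $\cL_+(v)$. Throughout write $s := |L(v)|$, so that $r = s/|v|^2$.

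\textbf{Lower bound.} For $\bx \in B(\Dot v, r/2)$ I will verify conditions (i) and (ii) of Lemma~\ref{lem:realise} for $v$. Note first that $\hor_\bx(v) = |v|\dist(v, \bx) < s/(2|v|)$. For any $u \in Q \setminus \{v\}$ with $|u| \le |v|$, the wedge $u \wedge v$ is a nonzero integer multiple of some element of $\cL_+(v)$, so $|u \wedge v| \ge s$ by the minimality of $|L(v)|$. The triangle inequality then yields
\[
\hor_\bx(u) = |u|\dist(u, \bx) \ge |u|\bigl(\dist(u, v) - \dist(v, \bx)\bigr) \ge \tfrac{s}{|v|} - |u|\tfrac{r}{2} \ge \tfrac{s}{2|v|} > \hor_\bx(v),
\]
which places $v \in \Sigma(\bx)$, and hence $\bx \in \Delta(v)$.

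\textbf{Upper bound.} I will apply Lemma~\ref{lem:u+-} to $L(v)$ to produce $u_\pm \in Q$ with $u_\pm \wedge v = \pm L(v)$ and $|u_\pm| \le |v|$. In the generic case $v = u_+ + u_-$, one has $|u_+| + |u_-| = |v|$, so (relabeling if necessary) $|u_+| \le |v|/2$. For $\bx \in \Delta(v)$, Lemma~\ref{lem:realise}(i) applied to $u_+$ gives $|u_+|\dist(u_+, \bx) > |v|\dist(v, \bx)$, while the triangle inequality together with $\dist(u_+, v) = s/(|u_+||v|)$ gives $|u_+|\dist(u_+, \bx) \le s/|v| + |u_+|\dist(v, \bx)$. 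Combining these,
\[
(|v| - |u_+|)\dist(v, \bx) < \tfrac{s}{|v|},
\]
so $\dist(v, \bx) < s/\bigl(|v|(|v| - |u_+|)\bigr) \le 2s/|v|^2 = 2r$.

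\textbf{Main obstacle.} The one delicate point is the degenerate configuration $2v = u_+ + u_-$ with $|u_+| = |u_-| = |v|$, in which Lemma~\ref{lem:realise}(i) does not apply to $u_\pm$ and (ii) only produces the one-direction bound $\dist(u_\pm, \bx) \ge \dist(v, \bx)$. To close off the remaining directions I would replace $L(v)$ by a second element $L' \in \cL_+(v)$ linearly independent from $L(v)$; Minkowski's second theorem applied to the $d$-dimensional covolume-$1/|v|$ lattice $\cL_\Z(v)$ guarantees such an $L'$ of controlled norm, and the generic-case argument then carries over with a corresponding $u' \in Q$ of height strictly less than $|v|$ (whose existence is ensured by the hypothesis $|v| > 1$).
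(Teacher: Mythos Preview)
Your lower-bound argument and your generic-case upper bound are correct and essentially match the paper's proof; the paper runs the same triangle-inequality computation, merely adopting the convention $|u_+|\ge|u_-|$ and working inside $\Delta_{u_-}(v)$ rather than relabeling.

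The degenerate configuration $|u_+|=|u_-|=|v|$ is where your proposal departs from the paper and where there is a genuine gap. Passing to a second element $L'\in\cL_+(v)$ of ``controlled norm'' via Minkowski's second theorem and rerunning the generic argument only yields
\[
\dist(\bx,\Dot v)<\frac{2|L'|}{|v|^2},
\]
and there is no reason for $|L'|$ to equal $|L(v)|$: Minkowski bounds the second successive minimum in terms of the covolume and the first minimum, not by the first minimum itself. So as stated your argument proves $\Delta(v)\subset B(\Dot v, 2|L'|/|v|^2)$ for some $L'$, but not the claimed $\Delta(v)\subset B(\Dot v,2r)$ with $r=|L(v)|/|v|^2$. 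The paper does not go through a second minimum at all: instead it argues that since $|v|>1$ there is a smallest $r'>0$ for which the cylinder $\overline{B(0,r')}\times(-|v|,|v|)$ meets $h_{\Dot v}\Z^{d+1}$ outside $E_+$, producing a $u\in Q$ with $0<|u|<|v|$ and $\hor_{\Dot v}(u)=r'$; it then takes $L=\Z u+\Z v$, which by construction has $|u_+|<|v|$, and applies the generic estimate directly to this $L$.
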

\begin{proof}
Let $L\in\cL(v)$ and choose the orientation for it such that 
  $u_\pm$ in Lemma~\ref{lem:u+-} satisfy $|u_+|\ge|u_-|$.  
For any $\bx\in\Delta_{u_-}(v)$ we have 
  $$\dist(\bx,v)<\frac{|u_-|}{|v|}\dist(\bx,u_-)<\frac{|u_-|}{|v|}
     \left(\dist(\bx,v) + \frac{|L|}{|u_-||v|}\right)$$
  so that 
  $$\left(1-\frac{|u_-|}{|v|}\right)\dist(\bx,v)<\frac{|L|}{|v|^2}.$$  
If $|u_+|<|v|$ then it follows that 
\begin{equation}\label{ieq:diam:u+}
  \dist(\bx,v) < \frac{|L|}{|u_+||v|} \le \frac{2|L|}{|v|^2}.  
\end{equation}
We claim $L$ could have been chosen initially so that $|u_+|<|v|$.  
Indeed, let $B'(r')=\overline{B(0,r')}\times(-|v|,|v|)$ and 
  note that since $|v|>1$ there is a smallest $r'>0$ such that 
  $B'(r')\cap h_{\Dot v}\Z^{d+1} \not\subset E_+.$  
Then $Q\cap h_{-\Dot v}B'(r')\neq\zip$ and the desired property 
  for $L$ is satisfied by $\Z u+\Z v$ for any $u$ in this set.  
This proves the claim, and hence $\Delta(v)\subset B(\Dot v,2r)$.  

For the other inclusion, consider $u\in Q$ with $|u|\le|v|$ and $u\neq v$.  
Let $\lambda=\tfrac{|u|}{|v|}$ and $\mu>0$.  
For any $\bx\in B(\Dot v,\mu r)$ we have 
  $$\dist(\bx,v) < \frac{\mu|L|}{|v|^2} \le \lambda\mu\frac{|u\wedge v|}{|u||v|}$$ 
  whereas 
  $$\dist(\bx,u)>(1-\lambda\mu)\frac{|u\wedge v|}{|u||v|}.$$  
Thus, $\bx\in\Delta_u(v)$ provided $$\mu\le1-\lambda\mu$$ 
  and since $\lambda\le1$, this holds for $\mu=\tfrac12$.  
\end{proof}

As a corollary of Theorem~\ref{thm:diam} we get 
\begin{equation}\label{cor:diam}
  \diam\Delta(v)\asymp\frac{|L(v)|}{|v|^2}\le\frac{\mu_0}{|v|^{1+1/d}}. 
\end{equation}
  where $\mu_0$ is the constant satisfying (\ref{def:mu0}).  
Here, $A\asymp B$ means $C^{-1}B\le A\le CB$ for some constant $C$.  

\begin{remark}
Observe that if $u,v\in Q$ are distinct vectors that realise 
  the same local minimum of $W_\bx$, then $|u|=|v|$ so that we 
  can think of $u-v$ as an element of $\Z^d$; moreover, 
  $$\|u-v\|=|v|\dist(u,v)\le\frac{8\mu_0}{|v|^{1/d}}.$$  
If we let $\lambda_0$ be the norm of the shortest nonzero vector 
  in $\Z^d$, then it follows that any vector $v\in\Sigma(\bx)$ 
  satisfying $$|v|>\frac{8^d\mu_0^d}{\lambda_0^d}$$ is uniquely 
  determined by the local minimum that it realises.  
In other words, the sequence of best approximations is eventually 
  uniquely determined.  
This fact had already been observed in \cite{JL1}.  
\end{remark}

\begin{theorem}\label{thm:best:1}
Let $v\in\Sigma(\bx)$ and suppose $u\in Q$ is such that $|u|<|v|$.  Then 
\begin{equation}\label{ieq:best:1}
  \frac{1}{2}\dist(u,v) < \dist(u,\bx) < 2\dist(u,v).  
\end{equation}
\end{theorem}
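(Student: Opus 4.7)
The plan is to combine Lemma~\ref{lem:realise}(i) with the triangle inequality and the distance formula (\ref{eq:dist}). Setting $\lambda := |u|/|v| < 1$, Lemma~\ref{lem:realise}(i) gives
$$|u|\dist(u,\bx) = \hor_\bx(u) > \hor_\bx(v) = |v|\dist(v,\bx),$$
i.e.\ $\dist(v,\bx) < \lambda\dist(u,\bx)$. The lower bound in (\ref{ieq:best:1}) is then immediate: the triangle inequality yields
$$\dist(u,v) \le \dist(u,\bx) + \dist(v,\bx) < (1+\lambda)\dist(u,\bx) < 2\dist(u,\bx).$$

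The main obstacle is the upper bound, because the naive estimate $\dist(u,\bx) \le \dist(u,v) + \dist(v,\bx) \le \dist(u,v) + \lambda\dist(u,\bx)$ only produces $\dist(u,\bx) \le \dist(u,v)/(1-\lambda)$, which is weaker than the claimed factor of $2$ whenever $\lambda > 1/2$. To sharpen it I would bring in a second application of Lemma~\ref{lem:realise}, applied to $w := v-u \in \Z^{d+1}$. Its last coordinate equals $|v|-|u| > 0$, so writing $w = kw'$ with $k \ge 1$ and $w' \in Q$, we have $|w'| \le |v|-|u| < |v|$ and therefore $\hor_\bx(w') > \hor_\bx(v)$. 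Multiplying by $k$ gives $\hor_\bx(v-u) > \hor_\bx(v)$, which after dividing by $|v-u| = |v|-|u|$ becomes
$$\dist(v-u,\bx) > \frac{1}{1-\lambda}\dist(v,\bx).$$

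Finally, formula (\ref{eq:dist}) combined with the identity $(v-u)\wedge v = -u \wedge v$ gives $\dist(v-u,v) = \frac{\lambda}{1-\lambda}\dist(u,v)$. Feeding the previous display into the triangle inequality $\dist(v-u,\bx) \le \dist(v-u,v) + \dist(v,\bx)$ and simplifying collapses to $\dist(v,\bx) < \dist(u,v)$; one more triangle inequality then yields $\dist(u,\bx) \le \dist(u,v) + \dist(v,\bx) < 2\dist(u,v)$, as required. The crux is thus recognising the auxiliary vector $v-u$: its strictly smaller height makes a second application of Lemma~\ref{lem:realise} available, and the structural identity $(v-u)\wedge v = -u\wedge v$ is what makes the resulting ratios cancel cleanly so that the factor of $2$ emerges uniformly in $\lambda$.
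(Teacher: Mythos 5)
Your proof is correct, and the upper-bound argument is genuinely different from the paper's. For the lower bound, both arguments reduce to the triangle inequality combined with $\hor_\bx(u)>\hor_\bx(v)$; the paper routes this through Lemma~\ref{lem:dist:w} and the auxiliary point $u+v$, whereas you apply Lemma~\ref{lem:realise}(i) directly, which is a slight simplification. For the upper bound, the approaches diverge. The paper works with the primitive lattice $L\in\cL(v)$ containing $u$, writes $|u\wedge v|=b|L|$, invokes the element $u_+$ of Lemma~\ref{lem:u+-} together with the first inequality of~(\ref{ieq:diam:u+}), and splits into the cases $b\ge 2$ and $b=1$; this exhibits the index $b$ and the lattice machinery that recur in later results such as Theorem~\ref{thm:best:2}. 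You instead introduce the vector $w=v-u$, observe that its primitive part $w'$ has $|w'|\le|v|-|u|<|v|$ so that Lemma~\ref{lem:realise}(i) applies a second time, and exploit the identity $(v-u)\wedge v=-u\wedge v$ via~(\ref{eq:dist}) to obtain $\dist(v-u,v)=\tfrac{\lambda}{1-\lambda}\dist(u,v)$; feeding this into the triangle inequality collapses to $\dist(v,\bx)<\dist(u,v)$, from which the factor $2$ follows uniformly in $\lambda$. Your route is more self-contained (it uses only Lemma~\ref{lem:realise} and~(\ref{eq:dist}), bypassing Theorem~\ref{thm:diam} and the $u_\pm$ analysis entirely), at the mild cost of needing the small observation that $v-u$ is a positive multiple of a vector in $Q$ distinct from $v$ — which indeed holds, since $v-u$ has positive height $|v|-|u|$ and cannot be a multiple of the primitive vector $v$.
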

\begin{proof}
Let $L\in\cL(v)$ be the one containing $u$.  
Then $|u\wedge v|=b|L|$ for some positive integer $b$.  
By Lemma~\ref{lem:dist:w} we have 
  $$\dist(\bx,u) > \dist(u+v,u) = \frac{b|L|}{|u+v||u|} 
       \ge \frac{b|L|}{2|u||v|} = \frac12\dist(u,v).$$  
From the \emph{first} inequality in (\ref{ieq:diam:u+}) we have 
\begin{align*}
  \dist(\bx,u) &\le \dist(u,v) + \dist(v,\bx) \\
               &< \frac{b|L|}{|u||v|} + \frac{|L|}{|u_+||v|} 
                = \left(1+\frac{|u|}{b|u_+|}\right)\dist(u,v).  
\end{align*}
If $b\ge2$ the expression in parentheses is at most $2$.  
If $b=1$ then $u=u_\pm$ and the same is true again.  
Thus, $\dist(\bx,u)<2\dist(u,v)$.  
\end{proof}

Theorem~\ref{thm:main:4} is a consequence of the following.  
\begin{theorem}\label{thm:best:2}
Let $\tfrac{\bp_j}{q_j}$ be the sequence of best approximations to $\bx$ 
  and set $v_j=(\bp_j,q_j)$ and $L_{j+1}=\Z v_{j+1}+\Z v_j$.  Then 
\begin{equation}\label{ieq:best:2}
  \frac{|L_{j+1}|}{q_j(q_{j+1}+q_j)} \le \left\|\bx-\frac{\bp_j}{q_j}\right\| 
     \le \frac{2|L_{j+1}|}{q_jq_{j+1}}.  
\end{equation}
\end{theorem}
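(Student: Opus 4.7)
The plan is to rewrite both sides of (\ref{ieq:best:2}) in the distance notation $\dist(\cdot,\cdot)$ introduced after (\ref{eq:dist}), so that the inequalities reduce to statements already packaged in Theorem~\ref{thm:best:1} and Lemma~\ref{lem:dist:w}. Under this translation, $\|\bx-\bp_j/q_j\|$ equals $\dist(v_j,\bx)$, while (\ref{eq:dist}) together with the definitions $|L_{j+1}|=|v_j\wedge v_{j+1}|$ and $|v_j+v_{j+1}|=q_j+q_{j+1}$ shows that the two right-hand sides of (\ref{ieq:best:2}) are $2\dist(v_j,v_{j+1})$ and $\dist(v_j+v_{j+1},v_j)$ respectively.

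For the upper bound I would apply Theorem~\ref{thm:best:1} with $u=v_j$ and the role of ``$v$'' played by $v_{j+1}\in\Sigma(\bx)$; this is legal since the best approximations are ordered with $|v_j|<|v_{j+1}|$. The theorem then gives $\dist(v_j,\bx)<2\dist(v_j,v_{j+1})=2|L_{j+1}|/(q_jq_{j+1})$, which is exactly the desired upper bound.

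For the lower bound, the lower inequality in Theorem~\ref{thm:best:1} is slightly too weak: its constant $\tfrac12$ comes from the estimate $|u+v|\le 2|v|$ inside the proof, and undoing that step is exactly what is needed here. I would therefore apply Lemma~\ref{lem:dist:w} directly with $u=v_j$ and $v=v_{j+1}$, which is permitted because $\bx\in\Delta(v_{j+1})\subset\Delta_{v_j}(v_{j+1})$ by the defining intersection of $\Delta(v_{j+1})$. The lemma yields $\dist(\bx,v_j)>\dist(v_j+v_{j+1},v_j)$, and (\ref{eq:dist}) combined with $|v_j\wedge(v_j+v_{j+1})|=|v_j\wedge v_{j+1}|=|L_{j+1}|$ evaluates the right-hand side to $|L_{j+1}|/(q_j(q_j+q_{j+1}))$.

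No serious obstacle is anticipated: the theorem is essentially a repackaging of Theorem~\ref{thm:best:1} and Lemma~\ref{lem:dist:w} in the classical notation of Diophantine approximation. The only real nuance is recognising that for the lower bound one must bypass the proof of Theorem~\ref{thm:best:1} and invoke Lemma~\ref{lem:dist:w} directly, in order to recover the sharper denominator $q_j+q_{j+1}$ rather than the slacker $2q_{j+1}$ that Theorem~\ref{thm:best:1} would produce.
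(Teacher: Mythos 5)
Your proposal is correct and matches the paper's proof exactly: the paper likewise obtains the upper bound from Theorem~\ref{thm:best:1} and the lower bound from Lemma~\ref{lem:dist:w}, each applied with $u=v_j$ and $v=v_{j+1}$. Your remark that Theorem~\ref{thm:best:1}'s lower bound must be bypassed in favour of Lemma~\ref{lem:dist:w} directly, to recover the sharper denominator $q_j+q_{j+1}$, is precisely the point.
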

\begin{proof}
The first (resp. second) inequality follows by applying Lemma~\ref{lem:dist:w} 
  (resp. Theorem~\ref{thm:best:1}) with $u=v_j$ and $v=v_{j+1}$.  
\end{proof}

It can be shown that $|L_j|\le C|L(v_j)|$ for all $j$ (and $\bx$) 
  where $C$ is a constant that depends only on the norm $\|\cdot\|$.  
Using this, we may rewrite (\ref{ieq:best:2}) as 
  $$\|q_j\bx-\bp_j\| \asymp \frac{\delta(v_{j+1})}{q_{j+1}^{1/d}}$$ 
  where 
\begin{equation}\label{def:delta(v)}
  \delta(v) = |v|^{1/d}\|L(v)\|_{\cL(v)} \le \mu_0.  
\end{equation}
As we shall see $\bx\in\Sing(d)$ if and only if 
  $\delta(v_j)\to0$ as $j\to\infty$.  
See Theorem~\ref{thm:char} below.

\subsection{Characterisation of singular vectors}
The sequence of critical times of the function $W_\bx$ are ordered by 
  $$\tau_0 < t_0 < \tau_1 < t_1 < \dots $$ where 
  $\tau_j$ (resp. $t_j$) is the $j$th local maximum (resp. minimum) time.  
Note that the first critical point $\tau_0$ is a local maximum because 
  as $t\to-\infty$ we have $W_\bx(t)=t+\log\|v_{-1}\|$ where $v_{-1}$ is 
  any nonzero vector in $\Z^d$ of minimal $\|\cdot\|$-norm.  

\begin{definition}\label{def:loc:max}
For $u,v\in Q$ and $\bx\in\R^d$, let $\eps_\bx(u,v)$ be defined by 
  $$\eps_\bx(u,v)^{1+1/d}=|v|^{1/d}\hor_\bx(u).$$  
\end{definition}

\begin{lemma}\label{lem:common}
For any $u,v\in Q$ with $|u|<|v|$ and for any $\bx\in\Delta(v)$ there is 
  a unique time $\tau$ when $\|g_\tau h_\bx u\|'=\|g_\tau h_\bx v\|'$.  
Moreover, the common length is given by $\eps_\bx(u,v)$.  
\end{lemma}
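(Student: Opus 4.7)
The plan is to work directly with the piecewise linear functions $f_w(t) := \log\|g_t h_\bx w\|'$ for $w \in \{u,v\}$. Unpacking the definitions, I get
\begin{equation*}
  f_w(t) = \max\bigl(t + \log \hor_\bx(w),\; -dt + \log|w|\bigr),
\end{equation*}
so $f_w$ is piecewise linear with slopes $+1$ and $-d$, with its single corner at
\begin{equation*}
  t_w^* = \frac{\log|w| - \log \hor_\bx(w)}{d+1}.
\end{equation*}
The common length $\|g_\tau h_\bx u\|' = \|g_\tau h_\bx v\|'$ is then simply where $f_u$ and $f_v$ agree.

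First I would exploit the hypotheses. Since $\bx \in \Delta(v) = \{\bx : v \in \Sigma(\bx)\}$ and $|u| < |v|$, Lemma~\ref{lem:realise}(i) gives $\hor_\bx(u) > \hor_\bx(v)$. Combined with $|u| < |v|$ this immediately yields $t_u^* < t_v^*$, and it also pins down the sign of $f_u - f_v$ at infinity: as $t \to -\infty$ both branches have slope $-d$, so $f_u(t) - f_v(t) \to \log(|u|/|v|) < 0$, while as $t \to +\infty$ both have slope $+1$, so $f_u(t) - f_v(t) \to \log(\hor_\bx(u)/\hor_\bx(v)) > 0$.

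Next I would pin down the shape of $f_u - f_v$ to get both existence and uniqueness of the crossing time $\tau$. On $(-\infty, t_u^*]$ both functions lie on their $-dt$ branches, so $f_u - f_v$ is constant; on $[t_v^*, \infty)$ both lie on their $+t$ branches, so it is again constant; and on the interval $[t_u^*, t_v^*]$ the function $f_u$ has switched to slope $+1$ while $f_v$ still has slope $-d$, so $f_u - f_v$ increases with slope $d+1 > 0$. Thus $f_u - f_v$ is monotone nondecreasing and strictly increasing on $[t_u^*, t_v^*]$ from a negative to a positive value, producing a unique zero $\tau$, which necessarily lies in the open interval $(t_u^*, t_v^*)$.

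Finally I would compute the common value explicitly. At this $\tau$, the relation $f_u(\tau) = \tau + \log \hor_\bx(u)$ and $f_v(\tau) = -d\tau + \log|v|$ gives $(d+1)\tau = \log|v| - \log \hor_\bx(u)$; substituting back yields
\begin{equation*}
  \|g_\tau h_\bx u\|' = e^\tau \hor_\bx(u) = |v|^{1/(d+1)} \hor_\bx(u)^{d/(d+1)} = \bigl(|v|^{1/d}\hor_\bx(u)\bigr)^{d/(d+1)},
\end{equation*}
which is exactly $\eps_\bx(u,v)$ by Definition~\ref{def:loc:max}. The only minor subtlety I anticipate is verifying the correct ordering $t_u^* < t_v^*$; everything else is routine once the two piecewise linear graphs are in hand.
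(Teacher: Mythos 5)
Your proof is correct and follows essentially the same approach as the paper: both identify the crossing time as the intersection of the $+1$ branch of $t\mapsto\log\|g_th_\bx u\|'$ with the $-d$ branch of $t\mapsto\log\|g_th_\bx v\|'$ and then read off the value $\eps_\bx(u,v)$ from the coordinates of that intersection. You supply a bit more detail than the paper does on why the crossing lies on those two branches (via the ordering $t_u^*<t_v^*$) and on why it is unique (monotonicity of $f_u-f_v$), but these are precisely the facts the paper's terser argument is implicitly relying on.
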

\begin{proof}
Since $|u|<|v|$ and $\bx\in\Delta(v)$ we have $\hor_\bx(u)>\hor_\bx(v)$.  
This implies the existence of $\tau$.  Let $\eps=\eps_\bx(u,v)$.  
The point $(\tau,\log\eps)$ is where the lines $y=-d(t-a)$ and $y=(t-b)$ 
  meet, where $a=\frac{1}{d}\log|v|$ and $b=-\log\hor_\bx(u)$.  
Since $$(t,y)=\left(\frac{ad+b}{d+1},\frac{ad-bd}{d+1}\right)$$ we have 
\begin{align*}
  \log\eps &= \frac{1}{d+1}\log|v|+\frac{d}{d+1}\log\hor_\bx(u), \\
      \tau &= \frac{1}{d+1}\log|v|-\frac{1}{d+1}\log\hor_\bx(u) 
\end{align*}
  from which it follows that 
  $$\eps^{1+1/d}=|v|^{1/d}\hor_\bx(u) \quad\text{and}\quad 
    e^{-(d+1)\tau} = \frac{\hor_\bx(u)}{|v|}.$$  
\end{proof}

\begin{lemma}\label{lem:char}
Assume $\bx\not\in\Q^d$ and set $\delta_j=\eps_\bx(v_{j-1},v_j)^{1+1/d}$ 
  where $(v_j)$ is the sequence of best approximations to $\bx$.  
Then $\bx\in\DI_\delta(d)$ iff $\delta_j<\delta$ for all sufficiently large $j$.  
\end{lemma}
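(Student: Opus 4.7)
The plan is to unpack Definition~\ref{def:loc:max} so that the statement becomes $\bx\in\DI_\delta(d)$ iff $|v_j|^{1/d}\hor_\bx(v_{j-1})<\delta$ for all $j$ large, and then match this directly against the definition of $\DI_\delta(d)$ by choosing the auxiliary parameter $T$ to be a best approximation denominator. The key input from the preceding theory is the defining property of best approximations: $v_{j-1}$ realises the minimum value of $\hor_\bx(u)$ over all $u\in Q$ with $|u|<|v_j|$. Also, since $\bx\notin\Q^d$, Lemma~\ref{lem:PL} guarantees that the heights $|v_j|$ strictly increase to $\infty$.

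For the forward direction, suppose $\bx\in\DI_\delta(d)$ with threshold $T_0$. I would pick any $j$ large enough that $|v_j|>T_0$ and apply the definition of $\DI_\delta$ with $T=|v_j|$ to obtain $(\bp,q)\in\Z^{d+1}$ with $0<q<|v_j|$ and $\|q\bx-\bp\|<\delta/|v_j|^{1/d}$. The best approximation property of $v_{j-1}$ then forces
\[
  \hor_\bx(v_{j-1}) \le \|q\bx-\bp\| < \frac{\delta}{|v_j|^{1/d}},
\]
i.e.\ $\delta_j = |v_j|^{1/d}\hor_\bx(v_{j-1})<\delta$, as required.

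For the reverse direction, assume $\delta_j<\delta$ for all $j\ge J_0$, and set $T_0=|v_{J_0}|$. Given any $T>T_0$, I would use the monotonicity of $(|v_j|)$ to pick the unique index $j$ with $|v_j|<T\le|v_{j+1}|$; the choice $T>T_0$ guarantees $j\ge J_0$, so the hypothesis $\delta_{j+1}<\delta$ is available. I would then exhibit $v_j=(\bp_j,q_j)$ itself as the desired integer solution, since $0<q_j<T$ and
\[
  \|q_j\bx-\bp_j\| = \hor_\bx(v_j) = \frac{\delta_{j+1}}{|v_{j+1}|^{1/d}}
       < \frac{\delta}{|v_{j+1}|^{1/d}} \le \frac{\delta}{T^{1/d}},
\]
which shows $\bx\in\DI_\delta(d)$.

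There is no real obstacle here; the statement is essentially a bookkeeping translation between the Diophantine condition phrased in terms of arbitrary $T$ and the discrete condition phrased along the sequence $|v_j|$. The only point that deserves care is the choice of $T_0$ on the reverse direction, and the observation that $|v_j|\to\infty$ so that the index $j$ with $|v_j|<T\le|v_{j+1}|$ always exists.
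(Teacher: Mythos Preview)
Your proof is correct and follows essentially the same approach as the paper: both directions hinge on the identity $\hor_\bx(v_j)=\delta_{j+1}/|v_{j+1}|^{1/d}$, using $v_j$ as the witness for $T$ in the interval $(|v_j|,|v_{j+1}|]$ in one direction, and invoking the minimality of $\hor_\bx(v_{j-1})$ among heights below $|v_j|$ in the other. The paper's write-up is a bit terser (and has a harmless index slip), but the argument is the same.
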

\begin{proof}
Write $v_j=(\bp_j,q_j)$ so that Lemma~\ref{lem:common} implies 
  $$\|q_j\bx-\bp_j\|=\hor_\bx(v_j)=\frac{\delta_{j+1}}{q_{j+1}^{1/d}}.$$  
If $\delta_j<\delta$ then $(\bp_j,q_j)$ solves (\ref{ieq:sing}) 
  for all $q_j< T\le q_{j+1}$.  
It follows that $\delta_j<\delta$ for all large enough $j$ implies 
  $\bx\in\DI_\delta(d)$.  
Conversely, suppose $\bx\in\DI_\delta(d)$ so that there exists $T_0$ such 
  that (\ref{ieq:sing}) admits a solution for all $T>T_0$.    
Suppose $j$ is large enough so that $q_{j+1}>T_0$.  
Let $(\bp,q)$ be a solution to (\ref{ieq:sing}) for $T=q_{j+1}$.  
Then $q<q_{j+1}$, implying that 
  $$\|q_j\bx-\bp_j\|\le\|q\bx-\bp\|<\frac{\delta}{q_{j+1}^{1/d}}$$ 
  from which it follows that $\delta_j<\delta$.  
\end{proof}

The next theorem gives a characterisation is purely in terms of the 
  sequence of best approximations, without explicit reference to $\bx$.  
This will be needed to reduce the main task to a problem in symbolic dynamics.  
\begin{theorem}\label{thm:char}
Assume $\bx\notin\Q^d$ and for each $j\ge1$ set 
  $$\eps_j^{1+1/d}=\frac{|v_{j-1}\wedge v_j|}{|v_j|^{1-1/d}}$$ 
  where $(v_j)_{j\ge0}$ is the sequence of best approximations to $\bx$.  
Assume $\delta>0$ and $\eps>0$ be related by $\delta=\eps^{1+1/d}$.  
Then $\bx\in\DI_{\delta/2}(d)$ implies $\eps_j<\eps$ for all sufficiently 
  large $j$, which in turn implies $\bx\in\DI_{2\delta}(d)$.  
In particular, $\bx\in\Sing(d)$ iff $\eps_j\to0$ as $j\to\infty$.  
\end{theorem}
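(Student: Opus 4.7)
The plan is to reduce Theorem~\ref{thm:char} to Lemma~\ref{lem:char} by showing that $\eps_j^{1+1/d}$ is comparable, up to a factor of $2$, to the quantity $\delta_j := \eps_\bx(v_{j-1},v_j)^{1+1/d}$ appearing there. Since the heights of best approximations are strictly increasing, $|v_{j-1}| < |v_j|$ for all $j\ge1$, so Theorem~\ref{thm:best:1} applies with $u=v_{j-1}$ and $v=v_j\in\Sigma(\bx)$ and yields a two-sided comparison between $\dist(v_{j-1},\bx)$ and $\dist(v_{j-1},v_j)$. Translating distances into wedge-products via the identities $|u\wedge v|=|u||v|\dist(u,v)$ and $\hor_\bx(v)=|v|\dist(v,\bx)$ will convert this into the required comparison between $\eps_j$ and $\delta_j$.

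First I would put both quantities in a common form. Using $\hor_\bx(v)=|v|\dist(v,\bx)$,
$$\delta_j \;=\; |v_j|^{1/d}\hor_\bx(v_{j-1}) \;=\; |v_{j-1}|\,|v_j|^{1/d}\,\dist(v_{j-1},\bx),$$
while the identity $|v_{j-1}\wedge v_j|=|v_{j-1}||v_j|\dist(v_{j-1},v_j)$ gives
$$\eps_j^{1+1/d} \;=\; |v_{j-1}|\,|v_j|^{1/d}\,\dist(v_{j-1},v_j).$$
Applying Theorem~\ref{thm:best:1} and multiplying through by $|v_{j-1}||v_j|^{1/d}$ then produces
$$\tfrac12\,\eps_j^{1+1/d} \;<\; \delta_j \;<\; 2\,\eps_j^{1+1/d}.$$

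Combining this with Lemma~\ref{lem:char} closes both implications. If $\bx\in\DI_{\delta/2}(d)$, then eventually $\delta_j<\delta/2$, so $\eps_j^{1+1/d}<2\delta_j<\delta=\eps^{1+1/d}$, i.e.\ $\eps_j<\eps$. Conversely, if $\eps_j<\eps$ for all large $j$, then $\delta_j<2\eps_j^{1+1/d}<2\delta$, so Lemma~\ref{lem:char} gives $\bx\in\DI_{2\delta}(d)$. For the last sentence, $\bx\in\Sing(d)$ means $\bx\in\DI_\delta(d)$ for every $\delta>0$; letting $\eps\to 0$ (equivalently $\delta\to 0$) in the two implications just proved yields $\bx\in\Sing(d)\iff\eps_j\to 0$. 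I do not anticipate a real obstacle here: the whole argument is a one-line consequence of Theorem~\ref{thm:best:1} and Lemma~\ref{lem:char}, provided one is careful to note that the hypothesis $|v_{j-1}|<|v_j|$ in Theorem~\ref{thm:best:1} is automatic for consecutive best approximations.
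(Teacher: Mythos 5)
Your proof is correct and takes exactly the same approach as the paper: reduce to Lemma~\ref{lem:char} by showing that $\eps_j^{1+1/d}$ and $\eps_\bx(v_{j-1},v_j)^{1+1/d}$ agree up to a factor of $2$, which follows from Theorem~\ref{thm:best:1} applied to $u=v_{j-1}$, $v=v_j$. The only difference is that you spell out the intermediate identities $\delta_j=|v_{j-1}||v_j|^{1/d}\dist(v_{j-1},\bx)$ and $\eps_j^{1+1/d}=|v_{j-1}||v_j|^{1/d}\dist(v_{j-1},v_j)$, which the paper leaves implicit.
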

\begin{proof}
By Lemmas~\ref{lem:char} is suffices to show 
  $$\frac12\eps_j^{1+1/d}\le\eps_\bx(v_{j-1},v_j)\le2\eps_j^{1+1/d}$$
  which holds by Theorem~\ref{thm:best:1}.  
\end{proof}

\section{Self-similar coverings}\label{S:self-sim}
Let $X$ be a metric space and $J$ a countable set.  
Given $\sigma\subset J\times J$ and $\alpha\in J$ we let $\sigma(\alpha)$ 
  denote the set of all $\alpha'\in J$ such that $(\alpha,\alpha')\in\sigma$.  
We say a sequence $(\alpha_j)$ of elements in $J$ is $\sigma$-\emph{admissible} 
  if $\alpha_{j+1}\in\sigma(\alpha_j)$ for all $j$, and let $J^\sigma$ denote 
  the set of all $\sigma$-admissible sequences in $J$.  
By a \emph{self-similar covering} of $X$ we mean a triple $(\cB,J,\sigma)$ 
  where $\cB$ is a collection of bounded subsets of $X$, $J$ a countable 
  index set for $\cB$, and $\sigma\subset J\times J$ such that there is a 
  map $\cE:X\to J^\sigma$ that assigns to each $x\in X$ a $\sigma$-admissible 
  sequence $(\alpha_j^\bx)$ such that for all $x\in X$ 
\begin{enumerate}
  \item[(i)] $\cap B(\alpha_j^\bx)=\{x\}$, and 
  \item[(ii)] $\diam B(\alpha_j^\bx)\to0$ as $j\to\infty$, 
\end{enumerate}
  where $B(\alpha)$ denotes the element of $\cB$ indexed by $\alpha$.  

\begin{theorem}\label{thm:gen:upper}
(\cite{Ch07}, Theorem~5.3)
If $X$ is a metric space that admits a self-similar covering $(\cB,J,\sigma)$, 
  then $\Hdim X \le s(\cB,J,\sigma)$ where 
\begin{equation}\label{def:s}
  s(\cB,J,\sigma)=\sup_{\alpha\in J}
      \inf \left\{s>0 :  \sum_{\alpha'\in\sigma(\alpha)} 
          \left(\frac{\diam B(\alpha')}{\diam B(\alpha)}\right)^s\le 1 \right\}.  
\end{equation}
\end{theorem}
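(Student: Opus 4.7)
The plan is to show that for every $s > s(\cB,J,\sigma)$ the metric space $X$ can be written as a countable union of sets each of finite $s$-dimensional Hausdorff measure; this implies $\Hdim X \le s$, and letting $s \downarrow s(\cB,J,\sigma)$ finishes the proof. So fix $s > s(\cB,J,\sigma)$. Unwinding the definition, the inequality
\begin{equation*}
  \sum_{\alpha' \in \sigma(\alpha)} (\diam B(\alpha'))^s \le (\diam B(\alpha))^s
\end{equation*}
holds for every $\alpha \in J$.

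The combinatorial step I would carry out next is the following antichain inequality. For each root $\alpha_0 \in J$, regard the set of finite $\sigma$-admissible sequences starting with $\alpha_0$ as a rooted tree, in which $(\alpha_0, \ldots, \alpha_n, \alpha_{n+1})$ is a child of $(\alpha_0, \ldots, \alpha_n)$ whenever $\alpha_{n+1} \in \sigma(\alpha_n)$. For any antichain $T$ in this tree,
\begin{equation*}
  \sum_{(\alpha_0, \ldots, \alpha_n) \in T} (\diam B(\alpha_n))^s \le (\diam B(\alpha_0))^s.
\end{equation*}
The finite case follows by induction on the maximum depth of $T$, partitioning $T$ by its second coordinate and applying the one-step bound at the root; the general case reduces to the finite one by taking the supremum over finite sub-antichains.

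With this in hand, fix the map $\cE: X \to J^\sigma$ witnessing the self-similar covering and set $X_{\alpha_0} := \{x \in X : \alpha_0^x = \alpha_0\}$, giving a countable decomposition $X = \bigcup_{\alpha_0 \in J} X_{\alpha_0}$. For $\alpha_0 \in J$ and $\delta > 0$, use condition (ii) to choose for each $x \in X_{\alpha_0}$ the smallest $n(x)$ with $\diam B(\alpha_{n(x)}^x) < \delta$; condition (i) makes $\{B(\alpha_{n(x)}^x)\}_{x \in X_{\alpha_0}}$ a cover of $X_{\alpha_0}$ by sets of diameter less than $\delta$. The associated prefix set $T_\delta = \{(\alpha_0, \alpha_1^x, \ldots, \alpha_{n(x)}^x) : x \in X_{\alpha_0}\}$ is an antichain in the tree above: if the prefix coming from $x$ were a strict initial segment of the one coming from $y$, the two sequences would agree through depth $n(x) < n(y)$, forcing $\diam B(\alpha_{n(x)}^y) < \delta$ and contradicting the minimality of $n(y)$. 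The antichain inequality now gives $H^s_\delta(X_{\alpha_0}) \le (\diam B(\alpha_0))^s$; passing to $\delta \to 0$ yields $H^s(X_{\alpha_0}) \le (\diam B(\alpha_0))^s < \infty$, so $\Hdim X_{\alpha_0} \le s$. Countable stability of Hausdorff dimension then gives $\Hdim X \le s$, as desired.

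The one step requiring real care is the antichain property of $T_\delta$, which depends both on $\cE$ being single-valued (so $\alpha_n^x$ is determined by $x$ alone) and on the minimality built into the choice of $n(x)$; without either, two chosen prefixes could nest and the estimate on $H^s_\delta(X_{\alpha_0})$ would be overcounted. Everything else amounts to standard measure-theoretic bookkeeping.
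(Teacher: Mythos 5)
The paper does not present its own proof of this theorem; it cites \cite{Ch07}, Theorem~5.3, so there is no internal argument to compare against line by line. Your proof is correct and is the natural argument for this result: decompose $X$ by the first symbol via $X=\bigcup_{\alpha_0}X_{\alpha_0}$, view the $\sigma$-admissible prefixes from $\alpha_0$ as a rooted tree, establish the antichain inequality by induction on depth from the one-step bound, observe that the stopping prefixes $T_\delta$ form an antichain because of the minimality of $n(x)$, and conclude via countable stability of Hausdorff dimension. The only step that deserves an extra word is the opening ``unwinding the definition'': the deduction that $s>s(\cB,J,\sigma)$ forces $\sum_{\alpha'\in\sigma(\alpha)}(\diam B(\alpha'))^s\le(\diam B(\alpha))^s$ for every $\alpha$ uses that $s\mapsto\sum_{\alpha'}(\diam B(\alpha')/\diam B(\alpha))^s$ is non-increasing, which in turn needs each ratio to be at most one. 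This is a harmless reduction: if some ratio exceeds one, the defining set for that $\alpha$ is empty, so $\inf\varnothing=+\infty$ makes $s(\cB,J,\sigma)=\infty$ and the theorem is vacuous. It is also worth noting that your argument applies verbatim to the slightly more general setting the paper says it actually needs, where the $B(\alpha)$ are bounded subsets of an ambient space $Y\supset X$ and the diameters in (\ref{def:s}) are taken in $Y$: nothing in the covering estimate for $H^s(X_{\alpha_0})$ requires $B(\alpha)\subset X$.
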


In many applications, $X$ is a subset of some ambient metric space $Y$ and 
  we are given a self-similar covering $(\cB,J,\sigma)$ of $X$ by bounded 
  subsets of $Y$ rather than $X$.  
For any bounded subset $B\subset Y$ we have $\diam_X B\cap X\le \diam_Y B$ 
  but equality need not hold in general.  
To compute $s(cB,J,\sigma)$ one would also need an inequality going in the 
  other direction.  While such an inequality may not be difficult to obtain, 
  it is both awkward and unnecessary:  \emph{Theorem~\ref{thm:gen:upper} 
  remains valid in this more general situation if the diameters in 
  (\ref{def:s}) are taken with respect to the metric of $Y$.}  
The proof of this more general statement does not follow directly from 
  Theorem~\ref{thm:gen:upper}, but the argument given in \cite{Ch07} applies 
  with essentially no change and will not be repeated here.  

For lower bounds, we shall use 
\begin{theorem}\label{thm:gen:lower:1}
Let $\cB$ be a collection of nonempty compact subsets of a metric space $Y$ 
  indexed by a countable set $J$.  
Suppose $X\subset Y$ and $\sigma\subset J\times J$ are such that 
\begin{enumerate}
  \item[(i)] for each $\alpha\in J$, $\sigma(\alpha)$ is a finite subset of $J$ 
    with at least $2$ elements and for each $\alpha'\in\sigma(\alpha)$ we have 
    $B(\alpha')\subset B(\alpha)$, 
 \item[(ii)] for each $(\alpha_j)\in J^\sigma$, we have $\diam B(\alpha_j)\to0$ 
    and the unique point in $\cap B(\alpha_j)$ belongs to $X$, 
\item[(iii)] there exists $\rho>0$ such that for any $\alpha\in J$ and 
    for any distinct pair $\alpha',\alpha''\in\sigma(\alpha)$ 
\begin{equation}\label{ieq:spacing:1}
           \dist(B(\alpha'),B(\alpha''))>\rho\diam B(\alpha), 
\end{equation}
 \item[(iv)] there exists $s>0$ such that for every $\alpha\in J$, 
\begin{equation}\label{ieq:lower:1}
      \sum_{\alpha'\in\sigma(\alpha)} 
              [\diam B(\alpha')]^s \ge [\diam B(\alpha)]^s.  
\end{equation}
\end{enumerate}
Then $\Hdim X\ge s$.  
\end{theorem}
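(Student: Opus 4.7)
The plan is to run the standard mass-distribution (Frostman) argument for a Cantor-like self-similar construction. The first step is to extract a compact limit set $K\subset X$ from the data: fix any $\alpha_0\in J$ and set $K=\bigcap_{k\ge 0}\bigcup B(\alpha_k)$, where the inner union runs over all $\sigma$-admissible words $\alpha_0,\alpha_1,\dots,\alpha_k$. Because (i) forces $\sigma(\alpha)$ to be finite and $B(\alpha')\subset B(\alpha)$, the set $K$ is a nested decreasing intersection of nonempty finite unions of compact sets, hence a nonempty compact subset of $X$ by (ii). Thus it suffices to prove $\Hdim K\ge s$.

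The second step is to place a natural probability measure $\mu$ on $K$ by distributing the unit mass recursively with weights dictated by (iv). Set $\mu(B(\alpha_0))=1$ and, for every $\alpha'\in\sigma(\alpha)$,
\[
  \mu(B(\alpha'))=\mu(B(\alpha))\cdot\frac{[\diam B(\alpha')]^s}{\sum_{\alpha''\in\sigma(\alpha)}[\diam B(\alpha'')]^s}.
\]
Hypothesis (iv) makes the denominator at least $[\diam B(\alpha)]^s$, so induction on depth yields the cell-wise Frostman estimate $\mu(B(\alpha))\le C\,[\diam B(\alpha)]^s$ with $C=[\diam B(\alpha_0)]^{-s}$. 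Since the cells at a common depth are pairwise disjoint (an induction on depth using (iii) to separate same-parent children and (i) to separate different-parent children), the assignment extends to a Borel probability measure on $K$.

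The crux is the third step: promote the cell-wise bound to a power inequality $\mu(U)\le C'(\diam U)^s$ valid for every $U\subset Y$ of sufficiently small diameter. Given $U$ with $\diam U<\rho\,\diam B(\alpha_0)$, I would follow the admissible chain $\alpha_0,\alpha_1,\dots$ of cells meeting $U$ so long as $U$ meets a single child at each level. If this chain never branches, then $U\cap K$ contains at most one point $p$, and $\mu(\{p\})\le\mu(B(\alpha_k))\to 0$ by (ii), so $\mu(U)=0$. Otherwise, let $k$ be the first depth at which $U$ meets two distinct cells $B(\alpha'),B(\alpha'')\in\sigma(\alpha_{k-1})$; invoking the spacing hypothesis (iii) forces $\diam U>\rho\,\diam B(\alpha_{k-1})$, so
\[
  \mu(U)\le\mu(B(\alpha_{k-1}))\le C\,[\diam B(\alpha_{k-1})]^s<C\rho^{-s}(\diam U)^s.
\]

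The mass-distribution principle then finishes the argument: for any countable cover $\{U_i\}$ of $K$ by sets of diameter less than $\rho\,\diam B(\alpha_0)$,
\[
  \sum_i(\diam U_i)^s\ge C^{-1}\rho^s\sum_i\mu(U_i)\ge C^{-1}\rho^s\mu(K)=C^{-1}\rho^s>0,
\]
so the $s$-dimensional Hausdorff measure of $K$ is strictly positive, yielding $\Hdim X\ge\Hdim K\ge s$. The main technical obstacle is the third step, where (iii) must be applied to pass from an arbitrary set $U$ to an ancestor cell whose diameter is comparable with $\diam U$; once that reduction is available, the rest is bookkeeping with the recursive weights and the elementary inequality built into (iv).
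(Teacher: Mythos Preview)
Your argument is correct; the pivotal use of (iii) to bound $\diam U$ from below by $\rho\,\diam B(\alpha_{k-1})$ at the first branching level is exactly the point on which everything hinges, and you have it right.

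The packaging, however, differs from the paper's. You construct a Frostman probability measure on the limit set $K$ with recursive weights proportional to $[\diam B(\alpha')]^s$, derive the cell-wise bound $\mu(B(\alpha))\le C[\diam B(\alpha)]^s$ from (iv) by induction, and then invoke the mass distribution principle. The paper avoids building a measure altogether: it shows by induction on the maximal depth that any finite sub-collection $F$ of cells covering the limit set satisfies $\sum_{\alpha\in F}[\rho\,\diam B(\alpha)]^s\ge[\rho\,\diam B(\alpha_0)]^s$, using (iv) at each step to replace a full set of siblings by their parent. A general cover $\{U_i\}$ is then compared to such an $F$ via the same branching-level argument you use. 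The two routes are formally equivalent---your measure bound on $B(\alpha_{k-1})$ plays the same role as the paper's inductive inequality---but the paper's direct argument generalises more transparently to the weighted spacing condition (iii$'$)--(iv$'$) of the subsequent theorem, where one replaces $\rho\,\diam B(\alpha)$ by $\rho(\alpha)\,\diam B(\alpha)$ throughout; your measure construction would handle that case as well, with weights proportional to $[\rho(\alpha')\diam B(\alpha')]^s$.
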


Before giving a proof of Theorem~\ref{thm:gen:lower:1}, let us discuss the 
  significance of the spacing condition (iii).  
There are many theorems in the literature that can be applied to give lower 
  bounds on $\Hdim X$ in the setup of Theorem~\ref{thm:gen:lower:1}.  
Different assumptions on the set $X$ lead to different lower bound estimates.  
Consider the following \emph{distorted} Cantor set $$C_\delta,\quad 0<\delta<1$$ 
  defined as the intersection $\cap_{k\ge0} E_k$ where $E_0=[0,1]$, 
  $E_1=[0,\tfrac{\delta}{2}]\cup[\tfrac12,1]$ and for each $k\ge2$, the set $E_k$ 
  is a disjoint union of $2^k$ closed intervals obtained by removing a similar 
  $\tfrac{1-\delta}{2}$ fraction from each of the intervals of $E_{k-1}$.  
For each interval $I$ of $E_k$ the density of the intervals of $E_{k+1}$ in $I$ 
   is a constant $$\Delta_k=\frac{1+\delta}{2}$$ independent of both $I$ and $k$.  
The length of each interval in $E_k$ is bounded between 
  $$d^-_k=\frac{\delta^k}{2^k} \quad\text{and}\quad d^+_k=\frac{1}{2^k}.$$  
The length of the smallest gap between the intervals of $E_k$ is 
  $$\eps_k = \frac{(1-\delta)d^-_{k-1}}{2}$$ and each interval of $E_{k-1}$ 
  has exactly $$m_k=2$$ intervals of $E_k$ contained in it.  
The estimate in \cite{Mc87} based on density gives a lower bound 
  $$h_d(\delta) 
     = 1-\limsup_{k\to\infty} \frac{\sum_{i=1}^{k+1}|\log\Delta_i|}{|\log d^+_k|} 
     = \frac{\log(1+\delta)}{\log 2}$$
  whereas the estimate in \cite{Fa03} (p.64) based on gaps gives 
  $$h_g(\delta) 
     = \liminf_{k\to\infty} \frac{\log(m_1\cdots m_{k-1})}{-\log m_k\eps_k} 
     = \frac{\log 2}{\log(2/\delta)}.$$  
Both functions increase from $0$ to $1$ as $\delta$ ranges from $0$ to $1$, 
  and since 
\begin{align*}
  h'_d(0+)&=\frac{1}{\log2}, & h'_d(1-)&=\frac{1}{2\log2}, \\
  h'_g(0+)&=\infty, & h'_g(1-&)=\frac{1}{\log2}, 
\end{align*}
  we have 
  $$\lim_{\delta\to0^+}\frac{h_d(\delta)}{h_g(\delta)}=0,\quad\text{and}\quad 
    \lim_{\delta\to1^-}\frac{h_d(\delta)}{h_g(\delta)}=2.$$  
Note that the lower bounds are comparable in the limit as $\delta$ approaches 
  one, but the one using a gap hypothesis is infinitely better in the limit as 
  $\delta$ approaches zero.\footnote{The graphs of $h_d$ and $h_g$ cross near 
  the point $(.2726604^+,.3478475^+)$.}  

The exact value $h(\delta)$ for the Hausdorff dimension of $C_\delta$ is given 
  by the unique $0<s<1$ satisfying 
\begin{equation}\label{eq:exact}
    \left(\frac{\delta}{2}\right)^s+\left(\frac{1}{2}\right)^s=1,
    \qquad\text{or}\qquad 2^s=1+\delta^s.
\end{equation}
This follows from the easily verified fact that the Hausdorff measure of the 
  set $C_\delta$ in dimension $h(\delta)$ is equal to one.  
Note that Theorems~\ref{thm:gen:upper} and \ref{thm:gen:lower:1} both give 
  $h(\delta)$ as upper and lower bounds, respectively.  

From (\ref{eq:exact}) we have $$s=\log_2(1+\delta^s)\asymp\delta^s$$ so that 
\begin{align*}
  \log s &= s\log\delta + O(1), \\
  \frac1s\log\frac1s &\simeq \log\frac{1}{\delta}, \\
  \frac1s &\simeq \log\frac{1}{\delta}\log\frac{1}{\delta},
\end{align*}
  where $A\simeq B$ means the ratio tends to one as $\delta\to0$.  
It follows that 
\begin{equation}\label{ieq:h(delta)}
  h(\delta) \simeq \frac{\log\log(1/\delta)}{\log(1/\delta)}.  
\end{equation}

It is easy to check that the graph of $h(\delta)$ is \emph{nearly flat} as 
  $\delta\to1$ in the sense that all one-sided derivatives vanish there.  
Likewise, it is \emph{nearly vertical} as $\delta\to0$ in the sense that 
  the graph of the inverse function is nearly flat at the origin.  
Thus, we see that the both estimates $h_d(\delta)$ and $h_g(\delta)$ leave 
  plenty of room for improvement in the either limiting case $\delta\to0$ 
  or $\delta\to1$.  
One reason why $h_d(\delta)$ and $h_g(\delta)$ fail to provide sharp lower 
  bounds in the case of $C_\delta$ may be explained by the fact that the 
  lengths the intervals (or the gaps) in $E_k$ are not uniformly bounded: 
  $$\lim_{k\to\infty} \frac{d^+_k}{d^-_k} = \infty.$$  
It seems clear that better estimates should be obtainable if the parameters 
  $\Delta_k$, $d^\pm_k$, $m_k$, $\eps_k$ were replaced by parameters that 
  also depended on the particular interval in $E_k$; in other words, the 
  new parameters would be functions on a tree of intervals.  
Estimates on Hausdorff dimension would then be given in terms of ``local 
  conditions'' to be met at each node of this tree.  
An example of such condition is given by (\ref{ieq:lower:1}).  

Theorem~\ref{thm:gen:lower:1} is a special case of the next theorem, which 
  allows for a spacing condition with \emph{weights}.  

\begin{theorem}\label{thm:gen:lower:2}
Suppose (i) and (ii) of Theorem~\ref{thm:gen:lower:1} holds, and there exists 
  a function $\rho:J\to(0,1)$ such that 
\begin{enumerate}
\item[(iii')] for any $\alpha\in J$ and 
   for any distinct pair $\alpha',\alpha''\in\sigma(\alpha)$ 
\begin{equation}\label{ieq:spacing:2}
         \dist(B(\alpha'),B(\alpha''))>\rho(\alpha)\diam B(\alpha),  
\end{equation}
 \item[(iv')] there exists $s>0$ such that for every $\alpha\in J$, 
        $$\sum_{\alpha'\in\sigma(\alpha)} 
               [\rho(\alpha')\diam B(\alpha')]^s 
                  \ge [\rho(\alpha)\diam B(\alpha)]^s.$$  
\end{enumerate}
Then $\Hdim X\ge s$.  
\end{theorem}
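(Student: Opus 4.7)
The plan is to prove Theorem~\ref{thm:gen:lower:2} via the mass distribution principle: construct a positive finite Borel measure $\mu$ supported on a subset of $X$ satisfying $\mu(U) \leq (\diam U)^s$ for every set $U$, and then invoke the standard implication $\Hdim X \geq s$. The weights $\rho(\alpha)$ get absorbed naturally into the Frostman-style weight $w(\alpha) := [\rho(\alpha)\diam B(\alpha)]^s$.

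First I fix an arbitrary $\alpha_0 \in J$ and work inside the limit set $X_{\alpha_0}$ consisting of all points of the form $\bigcap_j B(\alpha_j)$ arising from $\sigma$-admissible sequences starting at $\alpha_0$; note $X_{\alpha_0} \subseteq X$ by hypothesis (ii). Define $\mu$ recursively by $\mu(B(\alpha_0)) := w(\alpha_0)$ and
\[
   \mu(B(\alpha')) := \mu(B(\alpha)) \cdot \frac{w(\alpha')}{\sum_{\beta \in \sigma(\alpha)} w(\beta)}, \qquad \alpha' \in \sigma(\alpha),
\]
for each descendant $\alpha$ of $\alpha_0$. Condition (iv') ensures the denominator is at least $w(\alpha)$, so an easy induction yields the key bound $\mu(B(\alpha)) \leq w(\alpha)$ for every descendant. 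The boxes at each fixed level are pairwise disjoint by (iii'), and diameters shrink by (ii), so $\mu$ extends uniquely to a finite positive Borel measure on $X_{\alpha_0}$ via a standard Carath\'eodory construction.

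Next I verify Frostman's condition by a descent argument. Given $U$ with $r := \diam U$, call a node $\alpha$ \emph{large} if $\rho(\alpha)\diam B(\alpha) > r$. By (iii'), any two distinct children of a large node are separated by more than $r$, so $U$ meets at most one child of any large $\alpha$. If $\alpha_0$ is not large, then $\mu(U) \leq w(\alpha_0) \leq r^s$ immediately. Otherwise, descend through the unique large child meeting $U$ at each step (stopping with $\mu(U) = 0$ if no such child exists), continuing until the first non-large node $\alpha_k$ is reached; such a $k$ exists because $\diam B(\alpha_j) \to 0$ forces $\rho(\alpha_j)\diam B(\alpha_j) \to 0$. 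At that moment every point of $U \cap X_{\alpha_0}$ lies in $B(\alpha_k)$, yielding $\mu(U) \leq w(\alpha_k) \leq r^s$. The conclusion $\Hdim X \geq \Hdim X_{\alpha_0} \geq s$ then follows from the mass distribution principle.

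The overall structure is a weighted refinement of the proof of Theorem~\ref{thm:gen:lower:1}, with the single spacing constant $\rho$ replaced throughout by the function $\rho(\alpha)$. The step most deserving of care is the alignment between (iii') and (iv'): the definition of ``large'' must be such that (iii') guarantees a unique descending child while (iv') correctly controls the accumulated Frostman mass at the stopping node. These two requirements match precisely when one takes $w(\alpha) = [\rho(\alpha)\diam B(\alpha)]^s$, so I do not anticipate any serious obstruction beyond this bookkeeping.
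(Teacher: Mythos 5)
Your proof is correct, but it takes a genuinely different route from the paper. You construct a mass distribution on the limit set $E(\alpha_0)$, prove the Frostman-type bound $\mu(U)\le(\diam U)^s$ by a descent through ``large'' nodes, and invoke the mass distribution principle. The paper instead works directly with coverings: it first proves, by induction on tree depth, that any finite subcover of $E(\alpha_0)$ by cylinder sets $B(\alpha)$ satisfies $\sum_{\alpha\in F}[\rho(\alpha)\diam B(\alpha)]^s \ge [\rho(\alpha_0)\diam B(\alpha_0)]^s$; it then passes from an arbitrary open cover to a cylinder cover via the spacing condition (iii'), associating to each $U$ the deepest node $\alpha_U$ with $U\cap E\subset B(\alpha_U)$ and observing that two separated children force $\diam U \ge \rho(\alpha_U)\diam B(\alpha_U)$. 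The two arguments are essentially dual: you push a measure forward along admissible sequences and test it against balls, the paper pulls covers back to cylinders and estimates their total weight. The paper's version avoids any appeal to measure-theoretic extension machinery (your Carath\'eodory step), at the cost of a slightly more intricate induction over finite subcovers; yours makes the role of the weights $w(\alpha)=[\rho(\alpha)\diam B(\alpha)]^s$ conceptually transparent. One small point you should make explicit: the countable additivity of $\mu$ on the cylinder algebra follows because the children $\{B(\alpha')\}_{\alpha'\in\sigma(\alpha)}$ are finitely many, pairwise disjoint (by (iii')), and cover $E\cap B(\alpha)$ (by (ii)); and to see that your descent terminates for $\diam U>0$ you correctly use $\rho<1$ together with $\diam B(\alpha_j)\to 0$.
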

\begin{proof}
Fix any $\alpha_0\in J$ and let $$E=E(\alpha_0)$$ be the set of all $x\in X$ such 
  that $\cap B(\alpha_j)=\{x\}$ for some $\sigma$-admissible sequence $(\alpha_j)$ 
  starting with $\alpha_0$.  
Let $J_0=\{\alpha_0\}$ and $J_k=\cup_{\alpha\in J_{k-1}}\sigma(\alpha)$ for $k>0$.  
Note that $$E=\cap_{k\ge0}E_k \quad\text{where}\quad E_k=\cup_{\alpha\in J_k}B(\alpha).$$  
Since each $J_k$ is finite, $E$ is compact.  Let $J'=\cup_{k\ge0}J_k$.  

\textbf{Claim.} For any finite subset $F\subset J'$ such that 
  $\cB_F=\{B(\alpha)\}_{\alpha\in F}$ covers $E$ we have 
\begin{equation}\label{ieq:finite}
  \sum_{\alpha\in F}[\rho(\alpha)\diam B(\alpha)]^s \ge [\rho(\alpha_0)\diam B(\alpha_0)]^s.  
\end{equation}
To prove the claim, it is enough to consider the case where $\cB_F$ has no redundant 
  elements, i.e. $B(\alpha)\cap E\neq\zip$ for all $\alpha\in F$, and 
  $B(\alpha)\not\subset B(\alpha')$ for any distinct pair $\alpha,\alpha'\in F$.  
It follows by (i) that the elements of $\cB_F$ form a disjoint collection.  

Proceed by induction on the smallest $k$ such that $F\subset J_0\cup\dots\cup J_k$.  
If $k=0$, then $F=\{\alpha_0\}$ and (\ref{ieq:finite}) holds with equality.  
For $k>0$, first note that for any $\alpha'\in F\cap J_k$ we have $\sigma(\alpha)\subset F$ 
  where $\alpha$ is the unique element of $J_{k-1}$ such that $\alpha'\in\sigma(\alpha)$.  
Indeed, given $\alpha''\in\sigma(\alpha)$, $B(\alpha'')\cap E\neq\zip$ implies 
  that $B(\alpha'')$ intersects $B(\alpha''')$ for some $\alpha'''\in F$.  
We cannot have $B(\alpha'')\subset B(\alpha''')$ for otherwise $B(\alpha')$ would be 
  a redundant element in $\cB_F$; therefore, $\alpha'''\not\in J_i$ for any $i<k$.  
Since $\alpha'''\in F$, we have $\alpha'''\in J_k$ so that $\alpha''=\alpha'''\in F$.  

Let $F'=F\cap(J_0\cup\dots\cup J_{k-1})$ and let $\Tilde F$ be the subset of $J_{k-1}$ 
  such that $F\cap J_k$ is the disjoint union of $\sigma(\alpha)$ as $\alpha$ ranges 
  over the elements of $\Tilde F$.  
Then (iv) implies   
  $$\sum_{\alpha\in F}[\rho(\alpha)\diam B(\alpha)]^s \ge 
       \sum_{\alpha\in F'\cup\Tilde F}[\rho(\alpha)\diam B(\alpha)]^s$$ 
  and the claim follows by the induction hypothesis applied to $F'\cup\Tilde F$.  

Now suppose $\cU$ is a covering of $E$ by open balls of radius at most $\eps$.  
Since $E$ is compact, there is a finite subcover $\cU_0$ and without loss of 
  generality we may assume each element of $\cU_0$ contains some point of $E$.  
For each $U\in\cU_0$ let $(\alpha_k)$ be the sequence determined by a choice of 
  $x\in U\cap E$ and requiring $x\in B(\alpha_k), \alpha_k\in J_k$ for all $k$.  
Let $k_0$ be the largest index $k$ such that $U\cap E\subset B(\alpha_k)$.  
Then there are distinct elements $\alpha',\alpha''\in \sigma(\alpha_k)$ such that 
  $U$ intersects both $B(\alpha')$ and $B(\alpha'')$ so that (iii) implies 
  $$\diam U \ge \dist(B(\alpha'),B(\alpha''))\ge \rho(\alpha_k)\diam B(\alpha_k).$$  
Let $F$ be the collection of $\alpha_k$ associated to $U\in\cU_0$.  Then 
  $$\sum_{U\in\cU} (\diam U)^s \ge \sum_{\alpha\in F}[\rho(\alpha)\diam B(\alpha)]^s 
        \ge [\rho(\alpha_0)\diam B(\alpha_0)]^s.$$  
Since $\eps>0$ was arbitrary, it follows that $E$ has positive $s$-dimensional 
  Hausdorff measure, and therefore, $\Hdim X\ge\Hdim E\ge s$.  
\end{proof}

\subsection{Divergent partial quotients}
As an illustration of the use of Theorems~\ref{thm:gen:upper} and 
  \ref{thm:gen:lower:1} we give an application to number theory.  
The rest of this section is independent of the other parts of the 
  paper and may be skipped without loss of continuity.  

We say an real number has \emph{divergent partial quotients} if it is 
  irrational and the sequence of terms $a_k$ in its continued fraction 
  expansion tends to infinity as $k\to\infty$.  
Let $D_\infty$ be the set of real numbers with divergent partial quotients.  
Our goal is to determine the Hausdorff dimension of $D_\infty$.  

Let $D_N$ be the set of all irrational numbers whose sequence of partial 
  quotients satisfy $a_k>N$ for all sufficiently large $k$.  
Given $p/q\in\Q$ with $q\ge2$ let $p_-/q_-<p_+/q_+$ be the convergents that 
  precede $p/q$ in the two possible continued fraction expansions for $p/q$.  
They are determined by the conditions $$p_\pm q -pq_\pm = \pm1, \quad 0<q_\pm<q$$ 
  and we note that $q=q_++q_-$.  
Let $v=(p,q)$ and set 
  $$I_N(v) = \left[\frac{Np+p_-}{Nq+q_-},\frac{Np+p_+}{Nq+q_+}\right].$$  
This interval consists of all real numbers that have $p/q$ as a convergent 
  and such that the next partial quotient is at least $N$.  
(In particular, we note $I_1(v)=\overline{\Delta(v)}$.)  
We have 
\begin{align*}
  \big|I_N(v)\big| &= \left|\frac{Np+p_-}{Nq+q_-} - \frac{p}{q}\right| 
                    + \left|\frac{p}{q} - \frac{Np+p_+}{Nq+q_+}\right| \\
     &= \frac{1}{(Nq+q_-)q} + \frac{1}{(Nq+q_+)q} = \frac{2N+1}{(Nq+q_-)(Nq+q_+)}
\end{align*}
  so that $$\frac{2}{(N+1)q^2}\le\big|I_N(v)\big|\le\frac{2}{Nq^2}.$$  

Let $\cB_N$ be the collection of intervals $I_N(v)$ for $v\in Q$, $|v|\ge2$.  
Let $\sigma_N(v)$ be the set of all $v'\in Q$ of the form $av+v_\pm$ where 
  $v_\pm=(p_\pm,q_\pm)$ and $a>N$.  
Then $(\cB_N,Q,\sigma_N)$ is a self-similar covering of $D_N$: the map $\cE$ is 
  realised by sending $x\in D_N$ to a tail of the sequence of convergents of $x$.  
For any $v'\in\sigma_N(v)$ we have $$\frac{N}{(a+1)^2(N+1)} \le 
        \frac{\big|I_N(v')\big|}{\big|I_N(v)\big|} \le \frac{N+1}{a^2N}$$ 
   where $a$ is the greatest integer less than $\tfrac{|v'|}{|v|}$.  
Note that there are two elements of $\sigma(v)$ associated with each $a>N$.  

To estimate Hausdorff dimension we need to consider the expression 
  $$\sum_{v'\in\sigma(v)}\frac{\big|I_N(v')\big|^s}{\big|I_N(v)\big|^s}.$$ 
For any $0<s<1$ we have 
\begin{align*}
  \sum_{a>N}\frac{2(N+1)^s}{a^{2s}N^s}\le\frac{4}{(2s-1)N^{2s-1}}
\end{align*}
  which is $\le1$ provided $$\log N\ge\frac{1}{2s-1}\log\frac{4}{2s-1}.$$  
Let $s_+=s_+(N)$ be the unique $s$ such that the above holds with equality.  
Note that if $y=x\log x$ then in the limit as $x\to\infty$ we have 
  $\log y\simeq\log x$ so that $x\simeq\frac{y}{\log y}$.  
It follows that, in the limit as $N\to\infty$ we have 
  $$\frac{1}{2s_+-1}\simeq\frac{\log N}{\log\log N}$$ so that 
  applying Theorem~\ref{thm:gen:upper} we now get 
  $$\Hdim D_N \le \frac{1}{2} + \frac{c\log\log N}{\log N}$$  
  for some constant $c>0$.  

Let $\sigma'_N(v)$ be the subset of $\sigma_N(v)$ consisting of those 
  $v'$ for which $\lfloor\tfrac{|v'|}{|v|}\rfloor\le2N$.  
For distinct $v',v''\in\sigma'_N(v)$ with $v'=(p',q'),v''=(p'',q'')$ we have 
  $$\left|\frac{p'}{q'}-\frac{p''}{q''}\right|\ge\frac{1}{q'q''}\ge\frac{1}{(2N+1)^2q^2}$$ 
  whereas $$\big|I(v')\big|\le\frac{2}{N(q')^2}\le\frac{2}{N^3q^2}$$ from which 
  we see that the gap between $I(v')$ and $I(v'')$ is at least (assuming $N\ge72$) 
     $$\frac{1}{9N^2q^2} - \frac{4}{N^3q^2} \ge \frac{1}{18N^2q^2} \ge 
        \frac{1}{36N}\big|I_N(v)\big|.$$  
Thus, (\ref{ieq:spacing:1}) holds with $\rho=\frac{1}{36N}$.  
For any $0<s<1$ we have (using $N\ge2$) 
\begin{align*}
  \sum_{N<a\le2N}\frac{2N^s}{(a+1)^{2s}(N+1)^s}
     &\ge \frac{1}{2s-1}\left(\frac{1}{(N+2)^{2s-1}}-\frac{1}{(2N+2)^{2s-1}}\right) \\ 
     &\ge \frac{1}{3(2s-1)(N+2)^{2s-1}} \ge \frac{1}{6(2s-1)N^{2s-1}}
\end{align*}
  which is $\ge1$ provided $$\log N\le\frac{1}{2s-1}\log\frac{1/6}{2s-1}.$$  
Let $s_-=s_-(N)$ be the unique $s$ such that the above holds with equality.  
It follows that in the limit as $N\to\infty$ we have 
  $$\frac{1}{2s_--1}\simeq\frac{\log N}{\log\log N}$$ so that 
  applying Theorem~\ref{thm:gen:lower:1} we now get 
  $$\Hdim D_N \ge \frac{1}{2} + \frac{c\log\log N}{\log N}$$  
  for some constant $c>0$.  

This establishes the following.  
\begin{theorem}\label{thm:dpq:2}
There are $c_2>c_1>0$ such that for all $N\ge72$ 
  $$\frac{1}{2} + \frac{c_1\log\log N}{\log N} \le 
       \Hdim D_N \le \frac{1}{2} + \frac{c_2\log\log N}{\log N}.$$  
\end{theorem}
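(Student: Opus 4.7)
The plan is to exhibit $D_N$ (or a large subset of it) as the limit set of a self-similar covering whose indexing tree encodes continued-fraction convergents, and then estimate the critical exponent from above via Theorem~\ref{thm:gen:upper} and from below via Theorem~\ref{thm:gen:lower:1}. Concretely I would take as $\cB_N$ the intervals $I_N(v)$ with $v\in Q$, $|v|\ge 2$, and define $\sigma_N(v)$ to consist of all $v'=av+v_{\pm}$ with $a>N$. The self-similarity map $\cE$ sends $x\in D_N$ to a tail of its convergent sequence: once every subsequent partial quotient exceeds $N$, the resulting sequence in $Q$ is $\sigma_N$-admissible, the nested intervals $I_N(v_k)$ shrink at rate at most $2/(Nq_k^2)$, and their intersection is $\{x\}$.

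For the upper bound I would plug directly into Theorem~\ref{thm:gen:upper}. Using $|I_N(v)|\asymp 1/(Nq^2)$ the ratio $|I_N(v')|/|I_N(v)|\le(N+1)/(a^2N)$ when $\lfloor|v'|/|v|\rfloor=a$, with exactly two children per admissible $a$, so the inequality in (\ref{def:s}) reduces to
\[
\sum_{a>N}\frac{2(N+1)^s}{a^{2s}N^s}\le 1.
\]
Estimating the tail by $\int_N^\infty a^{-2s}\,da=(2s-1)^{-1}N^{1-2s}$ shows this holds once $\log N\gtrsim(2s-1)^{-1}\log(4/(2s-1))$. Inverting the relation $y=x\log x$ asymptotically gives a critical exponent $s_+(N)\le\tfrac12+c_2\log\log N/\log N$, the desired upper bound.

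For the lower bound I would restrict to $\sigma'_N(v)\subset\sigma_N(v)$ consisting of children with $N<a\le 2N$; this still yields a self-similar covering of a compact subset of $D_N$, with each node carrying $2N\ge 2$ children and with nested diameters tending to zero, verifying conditions (i)--(ii) of Theorem~\ref{thm:gen:lower:1}. The crucial new point is the spacing condition (iii): for two distinct children $I_N(v'),I_N(v'')$ with denominators $q',q''\le(2N+1)q$, the rationals $p'/q'$ and $p''/q''$ are at distance at least $1/(q'q'')\gtrsim 1/(N^2q^2)$, while each interval has length at most $2/(N(q')^2)\lesssim 1/(N^3q^2)$; subtracting gives a gap of order $1/(N^2q^2)\asymp|I_N(v)|/N$, so (iii) holds with $\rho=c/N$ once $N$ is large enough. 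A parallel lower estimate on $\sum_{N<a\le 2N}2N^s/((a+1)^{2s}(N+1)^s)$ then furnishes an $s_-(N)$ with the same $\tfrac12+\Theta(\log\log N/\log N)$ asymptotic, giving the matching lower bound.

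The main obstacle I expect is the balancing act in the lower bound: one must include enough children for the $s$-sum condition (iv) of Theorem~\ref{thm:gen:lower:1} to reach one, yet few enough (hence the restriction $a\le 2N$) to keep children well separated relative to the parent's diameter. The asymptotic inversion of $\log N\asymp(2s-1)^{-1}\log((2s-1)^{-1})$ is routine via $\log(x\log x)\sim\log x$, but extracting sharp constants $c_1,c_2$ requires bookkeeping of the implicit constants in the diameter and spacing comparisons; the hypothesis $N\ge 72$ appears precisely in that accounting.
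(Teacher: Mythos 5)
Your proposal matches the paper's proof essentially step for step: the same self-similar covering $(\cB_N,Q,\sigma_N)$ with $\sigma_N(v)=\{av+v_\pm : a>N\}$, the same tail-sum estimate via Theorem~\ref{thm:gen:upper} and asymptotic inversion of $\log N\asymp(2s-1)^{-1}\log((2s-1)^{-1})$ for the upper bound, and the same truncation to $N<a\le 2N$ with a gap-versus-diameter comparison to verify the spacing condition (iii) and feed Theorem~\ref{thm:gen:lower:1} for the lower bound. No substantive difference in route; only cosmetic details (integral vs.\ direct summation for the tail, and the precise constant bookkeeping giving $\rho=1/(36N)$ and the threshold $N\ge 72$) are left implicit in your sketch.
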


Since $D_\infty=\cap_ND_N$, it follows that $\Hdim D_\infty\le\tfrac12$.  
To obtain the opposite inequality, one can repeat the argument for 
  the lower bound on $\Hdim D_N$ with $N$ ``replaced'' by a sequence 
  $N_k$ that slowly increases to infinity.  
The main issue is that the spacing condition (\ref{ieq:spacing:1}) 
  is no longer satisfied, and this is precisely the point where 
  Theorem~\ref{thm:gen:lower:2} is needed to complete the argument.  
We shall omit the details, since we can instead appeal to a classical 
  result of Jarnik-Besicovitch.  
Let $A_\delta$ ($\delta>0$) be the set of irrationals whose sequence 
  of partial quotients satisfy $a_{k+1}>q_k^\delta$, where $q_k$ is 
  the height of the $k$th convergent.  
Then $A_\delta\subset D_\infty$ and the theorem of Jarnik-Besicovitch 
  asserts that $$\Hdim A_\delta\ge\frac{1}{2+\delta}.$$  
Thus, we have established 
\begin{theorem}\label{thm:dpq}
The Hausdorff dimension of $D_\infty$ is $\tfrac12$.  
\end{theorem}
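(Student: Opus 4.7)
The plan is to obtain both bounds from results already available in the excerpt, together with one classical theorem.

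For the upper bound $\Hdim D_\infty \le \tfrac12$, I would observe that $D_\infty \subset D_N$ for every $N$: if $x \in D_\infty$ then $a_k \to \infty$, so in particular $a_k > N$ for all sufficiently large $k$. Applying Theorem~\ref{thm:dpq:2} then gives
\[
  \Hdim D_\infty \le \Hdim D_N \le \frac{1}{2} + \frac{c_2 \log\log N}{\log N}
\]
for every $N \ge 72$, and letting $N \to \infty$ yields the desired inequality.

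For the lower bound, I would invoke the Jarnik-Besicovitch theorem through the set $A_\delta$ defined in the excerpt. First I check that $A_\delta \subset D_\infty$: the denominators $q_k$ of the convergents of any irrational tend to infinity, so the hypothesis $a_{k+1} > q_k^\delta$ forces $a_{k+1} \to \infty$, confirming divergence of the partial quotients. The classical estimate $\Hdim A_\delta \ge 1/(2+\delta)$ then gives $\Hdim D_\infty \ge 1/(2+\delta)$ for every $\delta > 0$; sending $\delta \to 0^+$ completes the proof.

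A self-contained alternative, hinted at but not pursued in the excerpt, would be to adapt the lower-bound construction for $D_N$ by replacing the fixed threshold $N$ at each level with a slowly growing sequence $N_k \to \infty$. The main obstacle is that the uniform spacing hypothesis (iii) of Theorem~\ref{thm:gen:lower:1} no longer holds, since the gap-to-diameter ratio at level $k$ is $\asymp 1/N_k \to 0$; one must instead appeal to Theorem~\ref{thm:gen:lower:2} with a level-dependent weight such as $\rho(\alpha) \asymp 1/N_k$ on level-$k$ nodes, and verify the weighted summation inequality (iv'). Quoting Jarnik-Besicovitch spares us this bookkeeping.
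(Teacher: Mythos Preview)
Your proposal is correct and follows essentially the same approach as the paper: the upper bound from $D_\infty\subset D_N$ together with Theorem~\ref{thm:dpq:2}, and the lower bound via the inclusion $A_\delta\subset D_\infty$ and the Jarnik--Besicovitch estimate $\Hdim A_\delta\ge 1/(2+\delta)$. The paper likewise mentions the alternative of using a slowly growing sequence $N_k$ with Theorem~\ref{thm:gen:lower:2}, but opts to quote Jarnik--Besicovitch instead.
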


To the best of the author's knowledge, the results in this section 
  have not appeared in the literature.

\section{Upper bound calculation}\label{S:upper}
In the rest of the paper, we assume $d=2$.  

\begin{definition}\label{def:eps(v)}
For each $v\in Q$, let $$\eps(v)^{3/2}:=\frac{|L(v)|}{|v|^{1/2}}$$ 
  and define $$Q_\eps:=\{v\in Q: \eps(v)<\eps \}.$$  
\end{definition}

\begin{definition}\label{def:Hat:L(v)}
For each $v\in Q$, let $$\cL^*(v):=\cL(v)\setminus\{L(v)\}.$$  
Fix, once and for all, an element $\Hat L\in \cL^*(v)$ such that 
  $|\Hat L|$ is minimal, and denote this element by $$\Hat L(v).$$  
\end{definition}

An important consequence of the assumption $d=2$ is the following.  
\begin{lemma}\label{lem:Hat:L}
For any $v\in Q_\eps$, 
\begin{equation}\label{ieq:LLv}
      \frac{|v|}{|L(v)|} \le |\Hat L(v)| \le (1+\eps^3)\frac{|v|}{|L(v)|}  
\end{equation}
  and 
\begin{equation}\label{ieq:Hat:L:eps} 
  |\Hat L(v)| > \eps^{-3/2}|v|^{1/2} \quad\text{and}\quad 
  |\Hat L(v)| > \eps^{-3}|L(v)|.
\end{equation}
\end{lemma}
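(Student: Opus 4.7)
The plan is to work inside the two-dimensional lattice $\cL_\Z(v)$ of covolume $V=1/|v|$ sitting in $\cL_\R(v)\cong E_+\cong\R^2$, using the identity $\|L\|_{\cL(v)}=|L|/|v|$ to recast the first pair of inequalities as $V\le\|w_1\|\cdot\|w_2\|\le(1+\eps^3)V$, where $w_1,w_2\in\R^2$ are the images of $L(v)$ and $\Hat L(v)$. The preparatory step is to check that $\{w_1,w_2\}$ forms a basis of $\cL_\Z(v)$. In dimension two the shortest nonzero lattice vector is automatically primitive, so $w_1$ realizes the first successive minimum $\lambda_1$; any lattice vector shorter than $w_2$ and independent of $w_1$ would, upon dividing out its content, yield a primitive element of $\cL^*(v)$ of no larger norm, contradicting the minimality in the definition of $\Hat L(v)$, so $w_2$ realizes the second successive minimum $\lambda_2$. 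The classical fact that the two successive minima of a 2D lattice are attained on a basis then gives $|\det(w_1,w_2)|=V$.

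The lower half of (\ref{ieq:LLv}) follows from the Cauchy--Schwarz inequality on $\R^2$ (using the implicit Euclidean structure): $\|w_1\|\cdot\|w_2\|\ge|\det(w_1,w_2)|=V$. For the upper half I would run a Minkowski reduction argument. For every $k\in\Z$ the element $\Hat L(v)+kL(v)\in\cL_\Z(v)$ is either primitive, and so by minimality of $\Hat L(v)$ has $\|\cdot\|_{\cL(v)}$-norm at least $\|\Hat L(v)\|_{\cL(v)}$; or it equals $d\cdot u$ for some primitive $u\in\cL^*(v)$ and integer $d\ge2$, in which case $\|\Hat L(v)+kL(v)\|_{\cL(v)}=d\,\|u\|_{\cL(v)}\ge\|\Hat L(v)\|_{\cL(v)}$ as well. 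Taking $k=\pm1$ in $\|w_2\pm w_1\|^2\ge\|w_2\|^2$ gives the reduction bound $|\langle w_1,w_2\rangle|\le\tfrac12\|w_1\|^2$, equivalently $\|w_2^\parallel\|\le\tfrac12\|w_1\|$ for the orthogonal decomposition $w_2=w_2^\parallel+w_2^\perp$ relative to $w_1$. Combined with $\|w_2^\perp\|=V/\|w_1\|$ from the basis property, Pythagoras gives
\[
  (\|w_1\|\cdot\|w_2\|)^2 \;\le\; V^2+\tfrac14\|w_1\|^4.
\]
Since $\|w_1\|^2/V=|L(v)|^2/|v|=\eps(v)^3$ and $\sqrt{1+t^2/4}\le1+t$ for $t\ge0$, this yields $\|w_1\|\cdot\|w_2\|\le V(1+\eps(v)^3)\le V(1+\eps^3)$, which after multiplying by $|v|^2$ is the upper half of (\ref{ieq:LLv}).

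The two inequalities in (\ref{ieq:Hat:L:eps}) are then immediate consequences of the lower half of (\ref{ieq:LLv}) together with the identity $|L(v)|=\eps(v)^{3/2}|v|^{1/2}<\eps^{3/2}|v|^{1/2}$:
\[
  |\Hat L(v)|\ge\frac{|v|}{|L(v)|}=\frac{|v|^{1/2}}{\eps(v)^{3/2}}>\eps^{-3/2}|v|^{1/2},
  \qquad
  \frac{|\Hat L(v)|}{|L(v)|}\ge\frac{|v|}{|L(v)|^2}=\eps(v)^{-3}>\eps^{-3}.
\]
The most delicate step is the Minkowski-reduction argument, where one must account for the possibility that $\Hat L(v)\pm L(v)$ is not primitive; this is precisely what the primitive-part dichotomy above is designed to handle.
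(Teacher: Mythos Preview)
Your proof is correct and follows essentially the same approach as the paper: both work inside the two-dimensional lattice $\cL_\Z(v)$, use that $L(v)$ and $\Hat L(v)$ form a reduced basis, and bound $|\Hat L(v)|$ via its components parallel and perpendicular to $L(v)$. Your version is somewhat more explicit (you spell out why $\{w_1,w_2\}$ is a basis and handle the primitivity of $\Hat L\pm L$, which is in fact automatic once the basis property is established), whereas the paper simply asserts the basis property and uses the cruder triangle-inequality bound $|\Hat L|\le |v|/|L|+|L|$ in place of your Pythagorean estimate.
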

\begin{proof}
Let $L=L(v)$ and $\Hat L=\Hat L(v)$.  
We may think of them as vectors in the plane of lengths $|L|$ and $|\Hat L|$, 
  respectively, such that the area of the lattice they span is $|v|$.  
Without loss of generality we assume $L$ is horizontal.  
The vertical component of $\Hat L$ is then $\frac{|v|}{|L|}$ so that 
  $$\frac{|v|}{|L|} \le |\Hat L| \le \frac{|v|}{|L|} + |L|$$ 
  giving (\ref{ieq:LLv}).  
If $|L|<\eps^{3/2}|v|^{1/2}$ then the vertical component of $\Hat L$ 
  is greater than $\eps^{-3/2}|v|^{1/2}$, giving the first inequality 
  in (\ref{ieq:Hat:L:eps}).  
From this and $v\in Q_\eps$, the second inequality in (\ref{ieq:Hat:L:eps}) 
  follows.  
\end{proof}

By Theorem~\ref{thm:char}, for any $\bx\in\Sing^*(d)$ and any $\eps>0$ the 
  elements $v\in\Sigma(\bx)$ belong to $Q_\eps$ if $|v|$ is large enough.  
Let $\cB_\eps=\{\Delta(v)\}_{v\in Q_\eps}$ and define 
  $$\sigma_\eps\subset Q_\eps\times Q_\eps$$ 
  to consist of all pairs $(v,v')$ such that $|v|<|v'|$ and $v$ and $v'$ 
  realise a consecutive pair of local minima of the function $W_\bx$ for 
  some $\bx\in\Sing^*(d)$.  
For each $\bx\in\R^d$, we fix, once and for all, a sequence $(v_j)$ in 
  $\Sigma(\bx)$ such that each local minimum of $W_\bx$ is realised by 
  exactly one $v_j$, and, by an abuse of notation, we shall denote this 
  sequence by the same symbol $$\Sigma(\bx).$$  
Then it is easy to see that $(\cB_\eps,Q_\eps,\sigma_\eps)$ is a self-similar 
  covering of $\Sing^*(d)$ for any $\eps>0$: the map $\cE$ can be realised 
  by sending $\bx$ to a tail of $\Sigma(\bx)$.  
However, Theorem~\ref{thm:gen:upper} does not yield any upper bound because 
  it happens that $s(\cB_\eps,Q_\eps,\sigma_\eps)=\infty$ for all $\eps>0$, 
  the main reason being that there is no way to bound the ratios 
  $$\frac{\diam\Delta(v')}{\diam\Delta(v)}$$ away from one.

\subsection{First acceleration}

\begin{definition}\label{def:Hat:Sigma}
Suppose that $\Sigma(\bx)=(v_j)$.  We define $$\Hat\Sigma(\bx)$$ to be 
  the subsequence of $\Sigma(\bx)$ consisting of those $v_{j+1}$ 
  such that $$v_{j+1}\notin \Z v_j + \Z v_{j-1}.$$  
\end{definition}

\begin{lemma}\label{lem:Hat:inf}
The sequence $\Hat\Sigma(\bx)$ has infinite length iff $\bx$ does not lie 
  on a rational line in $\R^d$.  
\end{lemma}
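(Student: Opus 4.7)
The strategy is to handle the two directions separately; the case $\bx\in\Q^2$ is trivial since then $\Sigma(\bx)$ is finite by Lemma~\ref{lem:PL} and $\bx$ lies on any rational line through it. From now on I assume $\bx\notin\Q^2$, so that $\Sigma(\bx)=(v_j)_{j\ge 0}$ is infinite.

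For the direction ``$\bx$ on a rational line $\ell$ $\Rightarrow$ $\Hat\Sigma(\bx)$ finite'', let $L_\ell:=\{v\in\Z^3:\Dot v\in\ell\}$ be the primitive $2$-dimensional sublattice associated to $\ell$. The key step is to show $v_j\in L_\ell$ for all sufficiently large $j$. Granting this, Lemma~\ref{lem:consec} gives that $\Z v_{j-1}+\Z v_j$ is itself a primitive $2$-dimensional sublattice contained in $L_\ell$, hence equal to it, so $v_{j+1}\in L_\ell=\Z v_{j-1}+\Z v_j$, which is precisely the condition excluding $v_{j+1}$ from $\Hat\Sigma(\bx)$.

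To prove the key step I would establish a uniform lower bound on $\hor_\bx$ off $L_\ell$. Writing $\ell$ via a primitive integer equation $\langle a,x\rangle=b$, for any $v=(\bp,q)\in Q\setminus L_\ell$ the integer $\langle a,\bp\rangle-bq$ is nonzero, so $\dist(\Dot v,\ell)\ge c_1/|v|$ with $c_1>0$ depending only on $a$ and $\|\cdot\|$. Since $\bx\in\ell$, this gives $\hor_\bx(v)=|v|\|\bx-\Dot v\|\ge c_1$. On the other hand, restricting $\bx$ to $\ell$ produces an irrational real number, whose ordinary continued fraction expansion yields a sequence $w_k\in L_\ell\cap Q$ with $|w_k|\to\infty$ and $\hor_\bx(w_k)\to 0$. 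Once $\hor_\bx(w_k)<c_1$, Lemma~\ref{lem:realise}(i) forbids any $v\in Q\setminus L_\ell$ with $|v|>|w_k|$ from belonging to $\Sigma(\bx)$.

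For the converse, assume $\Hat\Sigma(\bx)$ is finite, so $v_{j+1}\in\Z v_{j-1}+\Z v_j$ for all $j$ past some point. By Lemma~\ref{lem:consec}, $\Lambda_j:=\Z v_{j-1}+\Z v_j$ is a primitive $2$-dimensional sublattice of $\Z^3$, and the inclusion $\Lambda_{j+1}=\Z v_j+\Z v_{j+1}\subseteq\Lambda_j$ combined with primitivity of both sides forces $\Lambda_{j+1}=\Lambda_j$; inductively all these lattices coincide with one fixed primitive $2$-dimensional sublattice $\Lambda\subset\Z^3$. Its real span meets the affine plane $\{q=1\}$ (else every $v_j$ would have $q=0$, contradicting $v_j\in Q$), and this intersection is a rational line $\ell$ containing $\Dot{v_j}$ for all large $j$. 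Theorem~\ref{thm:best:2} together with the bound $|L(v)|\le\mu_0|v|^{1/2}$ from~(\ref{def:mu0}) makes the right-hand side of~(\ref{ieq:best:2}) tend to zero, so $\Dot{v_j}\to\bx$ and hence $\bx\in\ell$. I expect the main obstacle to be the uniform lower bound $\hor_\bx(v)\ge c_1$ outside $L_\ell$ in the first direction; once that is in hand, the converse is essentially bookkeeping with Lemma~\ref{lem:consec} and primitivity of rank-$2$ sublattices.
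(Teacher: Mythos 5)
Your proof is correct and follows essentially the same route as the paper: both reduce to showing that the best approximations $v_j$ are eventually confined to the primitive sublattice $L_\ell$ precisely when $\bx$ lies on the rational line $\ell$, with Lemma~\ref{lem:consec} then forcing $\Z v_j+\Z v_{j-1}=L_\ell$ and hence $v_{j+1}\notin\Hat\Sigma(\bx)$. You supply the details the paper leaves terse — the uniform bound $\hor_\bx(v)\ge c_1$ for $v\notin L_\ell$ and the continued-fraction construction of $w_k\in L_\ell$ with $\hor_\bx(w_k)\to 0$ — whereas the paper merely asserts that the shortest vector of $g_th_\bx\Z^{d+1}$ is eventually realized in $g_th_\bx L$.
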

\begin{proof}
The sequence $\Hat\Sigma(\bx)$ is finite iff there is a two-dimensional 
  sublattice $L\subset\Z^{d+1}$ such that $v_j\in L$ for all large enough $j$.  
If $\bx$ lies on the line $\ell\subset\R^d$ containing $\Dot v$ for all 
  $v\in L$ then the shortest vector in $g_th_\bx\Z^{d+1}$ is realised by 
  some vector in $g_th_\bx L$ for all large enough $t$.  
Conversely, $v_j\in L$ for all large enough $j$ implies 
  $\bx=\lim\Dot v_j\in\ell$.  
\end{proof}

\begin{lemma}\label{lem:Hat:char}
Let $\Hat\Sigma(\bx)=(u_k)$ where $\bx$ does not lie on a rational line.  
Let $\delta>0$ and $\eps>0$ be related by $\delta=\eps^{1+1/d}$.  
Then $\bx\in\DI_{\delta/2}(d)$ implies $\eps(u_k)<\eps$ for all sufficiently 
  large $k$, which in turn implies $\bx\in\DI_{2\delta}(d)$.  
In particular, $\bx\in\Sing(d)$ iff $\eps(u_k)\to0$ as $k\to\infty$.  
\end{lemma}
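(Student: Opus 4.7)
The plan is to reduce both implications to Theorem~\ref{thm:char} via the pointwise comparison $\eps(v_j)\le\eps_j$, valid for every $j\ge 1$: since $v_{j-1},v_j$ span a primitive two-dimensional sublattice by Lemma~\ref{lem:consec}, we have $|L(v_j)|\le|v_{j-1}\wedge v_j|$, and dividing by $|v_j|^{1-1/d}$ gives the inequality. Hence if $\bx\in\DI_{\delta/2}(d)$ then Theorem~\ref{thm:char} gives $\eps_j<\eps$ for all large $j$, and specialising to $j=J_k+1$ yields $\eps(u_k)\le\eps_{J_k+1}<\eps$ for all large $k$ (where $u_k=v_{J_k+1}$).

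For the converse, assume $\eps(u_k)<\eps$ for all $k\ge k_0$, with $\eps$ small. Write $M_{k-1}:=\Z v_{J_{k-1}}+\Z u_{k-1}$; by Lemma~\ref{lem:consec} this is a primitive two-dimensional sublattice, and all $v_j$ for $J_{k-1}+1\le j\le J_k$ lie in $M_{k-1}$, so applying Lemma~\ref{lem:consec} to each consecutive pair gives $|v_{j-1}\wedge v_j|=|M_{k-1}|$. The argument hinges on showing $M_{k-1}=L(u_{k-1})$ for all large $k$, for then $|M_{k-1}|=|L(u_{k-1})|$ and, for $J_{k-1}+1\le j\le J_k$,
\begin{equation*}
\eps_j^{1+1/d}\;=\;\frac{|M_{k-1}|}{|v_j|^{1-1/d}}\;\le\;\frac{|L(u_{k-1})|}{|u_{k-1}|^{1-1/d}}\;=\;\eps(u_{k-1})^{1+1/d}\;<\;\eps^{1+1/d},
\end{equation*}
and at the transition $j=J_k+1$ one has $\eps_j=\eps(u_k)<\eps$ by the same identity with $k$ in place of $k-1$. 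Then $\eps_j<\eps$ for all large $j$ and Theorem~\ref{thm:char} yields $\bx\in\DI_{2\delta}(d)$.

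\textbf{The main obstacle} is thus the claim $M_{k-1}=L(u_{k-1})$. Suppose for contradiction that $M_{k-1}\in\cL^*(u_{k-1})$; then (\ref{ieq:Hat:L:eps}) gives $|M_{k-1}|>\eps^{-3/2}|u_{k-1}|^{1/2}$, and combined with the lower bound in Theorem~\ref{thm:best:1} one obtains $\hor_\bx(v_{J_{k-1}})>\tfrac12\eps^{-3/2}|u_{k-1}|^{-1/2}$. Let $w\in Q$ be the partner of $u_{k-1}$ in $L(u_{k-1})$ furnished by Lemma~\ref{lem:u+-}: then $|w|<|u_{k-1}|$, and the upper bound in Theorem~\ref{thm:best:1} applied with $u=w$, $v=u_{k-1}\in\Sigma(\bx)$ gives $\hor_\bx(w)<2|L(u_{k-1})|/|u_{k-1}|<2\eps^{3/2}|u_{k-1}|^{-1/2}$. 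For $\eps<4^{-1/3}$ the ratio $\hor_\bx(w)/\hor_\bx(v_{J_{k-1}})<4\eps^3$ is less than $1$, which by the best-approximation property of $v_{J_{k-1}}$ forces $|w|>|v_{J_{k-1}}|$. A short induction using Lemma~\ref{lem:realise}, driven by the fact that no element of $Q$ with height in $(|v_{J_{k-1}}|,|u_{k-1}|)$ can be a best approximation, then shows that every $u\in Q$ with $|u|<|w|$ satisfies $\hor_\bx(u)>\hor_\bx(w)$; thus $w$ would itself be a best approximation strictly between the consecutive best approximations $v_{J_{k-1}}$ and $u_{k-1}=v_{J_{k-1}+1}$, a contradiction. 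The ``in particular'' assertion follows by applying both implications with $\eps\to 0$, since $\Sing(d)=\bigcap_{\delta>0}\DI_\delta(d)$.
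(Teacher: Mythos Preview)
Your approach is more careful than the paper's. The paper establishes the monotonicity $\eps_i>\dots>\eps_{j-1}$ within each block between consecutive $u_k,u_{k+1}$ and then asserts the equivalence with Theorem~\ref{thm:char} directly, implicitly using $L(u_k)=L_{j_k}$ (in your notation, $M_{k-1}=L(u_{k-1})$) without justification. You correctly isolate this as the missing ingredient and supply an argument.

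There is, however, a small gap in your final contradiction step. From $\hor_\bx(w)<\hor_\bx(v_{J_{k-1}})$ and $|v_{J_{k-1}}|<|w|<|u_{k-1}|$ it does \emph{not} follow that every $u\in Q$ with $|u|<|w|$ satisfies $\hor_\bx(u)>\hor_\bx(w)$; there could be another $u$ with height in $(|v_{J_{k-1}}|,|w|)$ and $\hor_\bx(u)\le\hor_\bx(w)$, so $w$ itself need not be a best approximation. The fix is immediate: among all $u\in Q$ with $|v_{J_{k-1}}|<|u|<|u_{k-1}|$ and $\hor_\bx(u)<\hor_\bx(v_{J_{k-1}})$ (a nonempty set containing $w$), pick one of minimal height; \emph{that} vector is a best approximation strictly between $v_{J_{k-1}}$ and $u_{k-1}$, giving the desired contradiction.

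Note also that your argument for $M_{k-1}=L(u_{k-1})$ needs $\eps<4^{-1/3}$, so the second implication is established only for small $\eps$. This suffices for the ``in particular'' assertion and for all applications in the paper, but leaves the literal statement for larger $\eps$ unaddressed. (A cleaner route that avoids the restriction: once you know $|M_{k-1}|>\eps^{-3/2}|u_{k-1}|^{1/2}$ would follow from $M_{k-1}\neq L(u_{k-1})$, observe that any primitive element of the two-dimensional lattice $\cL_\Z(u_{k-1})$ of norm less than $\sqrt{\vol\cL_\Z(u_{k-1})}=|u_{k-1}|^{-1/2}$ must be the shortest; so $|L(u_{k-1})|<|u_{k-1}|^{1/2}$, i.e.\ $\eps(u_{k-1})<1$, already forces $L(u_{k-1})$ to be the unique short primitive and hence any $L'\in\cL^*(u_{k-1})$ satisfies $\|L'\|_{\cL(u_{k-1})}\ge |u_{k-1}|^{-1/2}$; this does not by itself close the gap, but it shows the dichotomy you exploit holds for all $\eps<1$.)
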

\begin{proof}
Let $\Sigma(\bx)=(v_j)$ and for each $j$, let $L_j=\Z v_j+\Z v_{j-1}$ and 
  $$\eps_j=\frac{|L_j|}{|v_j|^{1-1/d}}.$$  
Given $u_k$ there are indices $i<j$ such that $u_k=v_i,\dots,v_j=u_{k+1}$.  
Suppose that $j>i+1$.  Then $v_{i+1}\in L_i$ so that $L_{i+1}\subset L_i$ 
  and we have equality by Lemma~\ref{lem:consec}.  
It follows by induction that $L_{j-1}=\dots=L_i$, from which is follows 
  that $\eps_i>\dots>\eps_{j-1}$.   
Therefore, $\eps(u_k)<\eps$ for all sufficiently large $k$ iff 
  $\eps(v_j)<\eps$ for all sufficiently large $j$.  
The lemma now follows from Theorem~\ref{thm:char}.  
\end{proof}

\begin{definition}\label{def:V(u)}
For any $u\in Q$, let $$\cV(u)$$ be the set of vectors in $Q$ of the form 
  $au+b\Tilde v$ where $a,b$ are relatively prime integers such that $|b|\le a$ 
  and $\Tilde v\in Q$ is a vector that satisfies $L(u)=\Z u+\Z\Tilde v$, 
  $|\Tilde v|<|u|$ and $L(\Tilde v)\neq L(u)$.  
The set $$\cV_\eps(u)$$ is defined similarly, except that we additionally 
  require $\Tilde v\in Q_\eps$ and $|\Tilde v|<\eps|u|$.  
\end{definition}

\begin{lemma}\label{lem:Hat:consec}
Let $u,u'\in\Hat\Sigma(\bx)$ be consecutive elements with $|u|<|u'|$.  
Let $\Tilde v,v\in\Sigma(\bx)$ be the elements that immediately precede 
  $u$ and $u'$, respectively.  
Then $v\in\cV(u)$, provided $L(u)=\Z u+\Z\Tilde v\neq L(\Tilde v)$.  
\end{lemma}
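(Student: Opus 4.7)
The plan is to identify $v$ as an element of $L(u)$, use primitivity to get coprimality of the coordinates $(a,b)$ in the basis $(u,\tilde v)$, and then deduce $|b|\le a$ by reducing the best-approximation problem on the rank-$2$ lattice $L(u)$ to a one-dimensional continued fraction expansion.

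Write $\Sigma(\bx)=(v_j)$, let $u=v_i$ and $u'=v_k$, so that $\tilde v=v_{i-1}$ and $v=v_{k-1}$. Since $u,u'$ are consecutive in $\Hat\Sigma(\bx)$, none of $v_{i+1},\ldots,v_{k-1}$ lies in $\Hat\Sigma(\bx)$, which by Definition~\ref{def:Hat:Sigma} gives $v_{j+1}\in\Z v_j+\Z v_{j-1}$ for $j=i,\ldots,k-2$; the hypothesis then reads $L(u)=\Z u+\Z\tilde v=\Z v_i+\Z v_{i-1}$. I would show by induction on $j\in\{i,\ldots,k-1\}$ that $\Z v_j+\Z v_{j-1}=L(u)$. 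The base case $j=i$ is the hypothesis; for the inductive step, admissibility places $v_{j+1}$ in $L(u)$, and Lemma~\ref{lem:consec} says $\Z v_{j+1}+\Z v_j$ is itself a primitive $2$-dimensional sublattice of $\Z^{d+1}$, which being contained in the primitive sublattice $L(u)$ must coincide with it. Taking $j=k-1$ yields $v\in L(u)$, so $v=au+b\tilde v$ for unique $a,b\in\Z$; since both $v$ and $L(u)$ are primitive in $\Z^{d+1}$, the pair $(a,b)$ is a primitive integer vector, giving $\gcd(a,b)=1$.

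For the sign condition $|b|\le a$, let $(a_j,b_j)$ denote the coordinates of $v_j$ in the basis $(v_i,v_{i-1})$, so $(a_{i-1},b_{i-1})=(0,1)$, $(a_i,b_i)=(1,0)$, and $(a,b)=(a_{k-1},b_{k-1})$. The equalities $\Z v_j+\Z v_{j+1}=L(u)$ established above force each transition matrix $\pmat{a_j&b_j\\a_{j+1}&b_{j+1}}$ to lie in $\mathrm{GL}_2(\Z)$, which combined with $v_{j+1}\in\Z v_j+\Z v_{j-1}$ yields a two-term recurrence
\begin{equation*}
   v_{j+1}=c_jv_j+\epsilon_jv_{j-1},\qquad c_j\in\Z,\ \epsilon_j\in\{\pm1\}.
\end{equation*}
Granted $\epsilon_j=+1$ and $c_j\ge1$, a straightforward induction from the nonnegative seeds $(1,0),(0,1)$ yields $a_j\ge b_j\ge0$ for $j\ge i$, and evaluating at $j=k-1$ gives $|b|=b\le a$.

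The positivity of this recurrence is the main obstacle and reflects the fact that, since $L(u)/\Z u$ has rank $1$, the best-approximation problem for $\bx$ restricted to $L(u)$ is effectively one-dimensional: the within-$L(u)$ best approximations of $\bx$ coincide with the convergents of the continued fraction of the projection of $\bx$ onto the rational line carrying the elements of $L(u)$, and these satisfy precisely such a recurrence with positive integer coefficients. To close the argument I would check that $v_{i-1},v_i,\ldots,v_{k-1}$ form a contiguous block of these one-dimensional convergents: skipping any intermediate convergent would force $\Z v_j+\Z v_{j+1}$ to be a proper sublattice of $L(u)$, contradicting the primitivity established above.
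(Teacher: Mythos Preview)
Your reduction of the sign issue to a one-dimensional continued fraction does not work.  The rational line $\ell\subset\R^d$ determined by $L(u)$ need not contain $\bx$; in fact for $\bx$ not on a rational line (which is the relevant case) it never does.  Hence $\hor_\bx$ restricted to $L(u)$ is the map $(a,b)\mapsto\|a(q_u\bx-\bp_u)+b(q_{\tilde v}\bx-\bp_{\tilde v})\|$, a genuine two-variable seminorm rather than a multiple of an absolute value.  There is no reason the global best approximations $v_{i-1},\dots,v_{k-1}$ should coincide with convergents of any real number, and for best approximations in $\R^d$ with $d\ge2$ the recurrence $v_{j+1}=c_jv_j+\epsilon_jv_{j-1}$ genuinely occurs with $\epsilon_j=-1$ (see \cite{JL2}).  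Once $\epsilon_j=-1$ is allowed, your induction on $a_j\ge b_j\ge0$ breaks already at the first step: from $(a_{i-1},b_{i-1})=(0,1)$ and $(a_i,b_i)=(1,0)$ one gets $(a_{i+1},b_{i+1})=(c_i,-1)$.

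The paper avoids any one-dimensional reduction by strengthening the induction hypothesis: one tracks simultaneously the coordinates of $v_j$ \emph{and} of $v_j-v_{j-1}$ in the basis $(u,\tilde v)$, and shows both lie in the cone $-a\le b\le a$ (strict on the left for $v_j$ itself).  The point is that when $\epsilon_j=-1$ one has $c_j\ge2$ and can rewrite $v_{j+1}=(c_j-1)v_j+(v_j-v_{j-1})$, so in either case both $v_{j+1}$ and $v_{j+1}-v_j$ are nonnegative integer combinations of $v_j$ and $v_j-v_{j-1}$, and the cone condition propagates.  Everything up to and including your derivation of the recurrence with $c_j\ge1$, $\epsilon_j\in\{\pm1\}$ is correct and matches the paper; it is only the handling of the sign that needs this extra bookkeeping.
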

\begin{proof}
Suppose $\Sigma(\bx)=(v_j)$ so that $u=v_i$, $v=v_j$, $\Tilde v=v_{i-1}$ 
  and $u'=v_{j+1}$ for some indices $i\le j$.  
We shall argue by induction on $j\ge i$ to show that 
\begin{enumerate}
  \item[(i)] $v_j = a u + b\Tilde v$ for some integers $-a<b\le a$, and 
  \item[(ii)] $v_j - v_{j-1} = a'u + b'\Tilde v$ for some integers $-a'\le b'\le a'$.  
\end{enumerate}
This is clear if $j=i$.  
For $j>i$, since $v_j\in\Z v_{j-1}+\Z v_{j-2}$, Lemma~\ref{lem:consec} implies 
  that $(v_j,v_{j-1})$ is an integral basis for $\Z v_{j-1}+\Z v_{j-2}$.  
Since $|v_j|>|v_{j-1}|>|v_{j-2}|$, there is a positive integer $c$ such that 
  $$v_j=cv_{j-1}+v_{j-2}, \quad\text{or}\quad  v_j=cv_{j-1}+(v_{j-1}-v_{j-2}).$$  
In either case $v_j=mu+n\Tilde v$ where $(m,n)=(ca+a',cb+b')$ satisfies 
  $-m<n\le m$ by the induction hypothesis.  
Similarly, $v_j-v_{j-1}=m'u+n'\Tilde v$ where $(m',n')=(ca-a',cb-b')$ satisfies 
  $-m\le n\le m$, again, by the induction hypothesis.  
The lemma now follows from (i).  
\end{proof}

\begin{definition}\label{def:Hat:sigma}
For any $\eps>0$, define $$\Hat\sigma_\eps\subset Q_\eps\times Q_\eps$$ 
  to be the set consisting of pairs $(u,u')$ for which there exists 
  $v\in\cV_\eps(u)$ such that $L(u')=\Z u'+\Z v\in\cL^*(v)$.  
\end{definition}

\begin{corollary}\label{cor:cover}
Let $\delta=\eps^{3/2}$ where $0<\eps<1$.  Then $(\cB_\eps,Q_\eps,\Hat\sigma_\eps)$ 
  is a self-similar covering of $\DI_{\delta/2}(2)$.  
\end{corollary}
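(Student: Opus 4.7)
The plan is to define, for each $\bx \in \DI_{\delta/2}(2)$, the sequence $\cE(\bx)$ to be a suitably far tail of $\Hat\Sigma(\bx)$, and to verify the three defining properties of a self-similar covering: $\Hat\sigma_\eps$-admissibility, the inclusion $\bx\in\Delta(u_k)$ with $\cap_k\Delta(u_k)=\{\bx\}$, and $\diam\Delta(u_k)\to 0$.

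Two of these properties follow quickly from what has already been set up. By Lemma~\ref{lem:Hat:char} and Theorem~\ref{thm:char}, both applied with the relation $\delta=\eps^{3/2}$, the hypothesis $\bx\in\DI_{\delta/2}(2)$ forces $\eps(u_k)<\eps$ and $\eps_j<\eps$ for all sufficiently large indices. Since $\eps(v_j)\le\eps_j$ whenever $v_j\in\Sigma(\bx)$, discarding an initial segment I may assume that every $u_k$ in the tail and every $\Sigma(\bx)$-predecessor of a $u_k$ lies in $Q_\eps$. The inclusion $\bx\in\Delta(u_k)$ is immediate from $u_k\in\Sigma(\bx)$, while the diameter estimate (\ref{cor:diam}) together with $|L(u_k)|=\eps(u_k)^{3/2}|u_k|^{1/2}<\eps^{3/2}|u_k|^{1/2}$ gives $\diam\Delta(u_k)\le C\eps^{3/2}|u_k|^{-3/2}\to 0$ for an absolute constant $C$, which then forces $\cap_k\Delta(u_k)=\{\bx\}$.

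The substantive part is checking $\Hat\sigma_\eps$-admissibility: for each consecutive pair $u_k,u_{k+1}$ in the tail I must exhibit $v\in\cV_\eps(u_k)$ with $L(u_{k+1})=\Z u_{k+1}+\Z v\in\cL^*(v)$. The natural candidates are $v\in\Sigma(\bx)$ immediately preceding $u_{k+1}$ and $\Tilde v\in\Sigma(\bx)$ immediately preceding $u_k$. After choosing the lattices $L(\cdot)$ so that $L(u_k)=\Z u_k+\Z\Tilde v$ and $L(u_{k+1})=\Z u_{k+1}+\Z v$ (each is a primitive $2$-dimensional sublattice by Lemma~\ref{lem:consec}, so this is an admissible "once and for all" choice provided each realizes the minimum of $|\cdot|$ in its $\cL_+$), Lemma~\ref{lem:Hat:consec} supplies the representation $v=au_k+b\Tilde v$ with $\gcd(a,b)=1$ and $|b|\le a$. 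The condition $L(u_{k+1})\in\cL^*(v)$ then reduces to $\Z u_{k+1}+\Z v\neq\Z v+\Z\Tilde v$, which is exactly the defining property $u_{k+1}\notin\Z v+\Z\Tilde v$ of $u_{k+1}\in\Hat\Sigma(\bx)$.

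The main obstacle has two parts. First, one must verify that the choices $L(u_k)=\Z u_k+\Z\Tilde v$ are consistent with minimality of $|\cdot|$ in $\cL_+(u_k)$; this should hold far out in the tail because $\eps(u_k)\to 0$ makes $L(u_k)$ extremely short, eventually forcing it to coincide with the sublattice generated by $u_k$ and its $\Sigma(\bx)$-predecessor. Second, to place $v$ in $\cV_\eps(u_k)$ rather than merely $\cV(u_k)$, I must confirm $|\Tilde v|<\eps|u_k|$ and $L(\Tilde v)\neq L(u_k)$. The height ratio $|\Tilde v|/|u_k|$ is not controlled by $\eps(u_k)$ alone; it depends on the length of the colinear block of $\Sigma(\bx)$ terminating at $u_k$, and forcing this ratio below $\eps$ requires further exploitation of $\eps_j\to 0$ along the block, possibly after replacing $\Tilde v$ by a different basis complement of $L(u_k)$ obtained by reducing modulo $u_k$. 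This is where the most delicate technical work lies.
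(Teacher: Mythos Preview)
Your approach is the paper's: send $\bx$ to a tail of $\Hat\Sigma(\bx)$ and check $\Hat\sigma_\eps$-admissibility through Lemma~\ref{lem:Hat:consec}.  Two corrections are in order.

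First, a misidentification in your third paragraph: $L(v)$ is not $\Z v+\Z\Tilde v$ in general --- when $v=au_k+b\Tilde v$ that sublattice has index $a$ in $L(u_k)$, so it need not even be primitive.  What actually happens is that the colinear block between $u_k=v_i$ and $u_{k+1}=v_{j+1}$ forces $L(v_m)=L(v_{m-1})$ for $i<m\le j$, so $L(v)=L(u_k)$.  The condition $L(u_{k+1})\neq L(v)$ is then precisely $u_{k+1}\notin\Z v_j+\Z v_{j-1}$, the true defining property of $u_{k+1}\in\Hat\Sigma(\bx)$; your statement of that property (with $\Tilde v=v_{i-1}$ in place of $v_{j-1}$) is not the definition.

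Second, and more substantively, the step you flag as ``most delicate'' is in fact immediate from Lemma~\ref{lem:Hat:L}, and this is why the paper's proof occupies three lines.  The paper first observes that $\eps_j<\eps<1$ gives $|L_j|^2<|v_j|$, and since $\cL_\Z(v_j)$ is a two-dimensional lattice of covolume $1/|v_j|$, any primitive vector with squared norm below the covolume is automatically shortest.  Hence $L(v_j)=\Z v_j+\Z v_{j-1}$ for \emph{every} large $j$, not only along $\Hat\Sigma(\bx)$.  In particular $L(\Tilde v)=\Z\Tilde v+\Z v_{i-2}\neq L(u_k)$ (since $u_k\in\Hat\Sigma(\bx)$), so $L(u_k)\in\cL^*(\Tilde v)$ and Lemma~\ref{lem:Hat:L} yields
\[
  |L(u_k)|\ \ge\ |\Hat L(\Tilde v)|\ >\ \eps^{-3/2}|\Tilde v|^{1/2}.
\]
Combined with $|L(u_k)|<\eps^{3/2}|u_k|^{1/2}$ this gives $|\Tilde v|<\eps^6|u_k|<\eps|u_k|$ directly.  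No reduction of $\Tilde v$ modulo $u_k$ and no analysis ``along the block'' is needed; also note that for $\DI_{\delta/2}$ you only have $\eps_j<\eps$ eventually, not $\eps_j\to0$, so your appeals to the latter should be replaced by the former throughout.
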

\begin{proof}
Let $\Sigma(\bx)=(v_j)$ for a given $\bx\in\DI_{\delta/2}(2)$.  
Lemma~\ref{lem:Hat:char} implies for all large enough $j$ we have 
  $$\|\Z v_j+\Z v_{j-1}\|_{\cL(v_j)} = 
              \frac{|v_{j-1}\wedge v_j|}{|v_j|^{1/2}}<\eps^{3/2}<1$$ 
  so that $L(v_j)=\Z v_j+\Z v_{j-1}$ and $\eps(v_j)<\eps$.  
Lemma~\ref{lem:Hat:consec} now implies $\Hat\Sigma(\bx)$ is 
  eventually $\Hat\sigma_\eps$-admissible.  
\end{proof}

Our next task is to estimate $s(\cB_\eps,Q_\eps,\Hat\sigma_\eps)$.  
For this, we need to enumerate the elements of $\Hat\sigma_\eps(u)$ 
  for any given $u\in Q_\eps$.  
\begin{definition}\label{def:U(v,L')}
For any $v\in Q$ and $L'\in\cL^*(v)$, let $$\cU_\eps(v,L')$$ 
  be the set of $u'\in Q_\eps$ such that $L'=L(u')=\Z u'+\Z v$.  
\end{definition}
Note that, by definition, for any $u'\in\Hat\sigma_\eps(u)$ there exists 
  a $v\in\cV_\eps(u)$ and an $L'\in\cL^*(v)$ such that $u'\in\cU_\eps(v,L')$.  
Hence, for any $f:Q_\eps\to\R_+$ 
  $$\sum_{u'\in\Hat\sigma_\eps(u)}f(u')\le \sum_{v\in\cV_\eps(u)} 
       \sum_{L'\in\cL^*(v)} \sum_{u'\in\cU_\eps(v,L')} f(u').$$  

\begin{notation}
We write $A\ls B$ to mean $A\le CB$ for some universal constant $C$.  
Note that $A\asymp B$ is equivalent to $A\ls B$ and $B\ls A$.  
We write $A\gs B$ to mean the same thing as $B\ls A$.  
\end{notation}

\begin{proposition}\label{prop:key}
There is a constant $C$ such that for any $u\in Q_\eps$, and 
  for any $s>4/3$ and any $r<6s-3$ 
\begin{equation}\label{ieq:key}
  \sum_{u'\in\Hat\sigma_\eps(u)} \left(\frac{\eps(u)}{\eps(u')}\right)^r
     \left(\frac{\diam\Delta(u')}{\diam\Delta(u)}\right)^s 
      \le \frac{C(6s-3-r)^{-1}\eps^{6s-3-r}}{(3s-4)^2(\eps(u))^{6-3s-r}}.  
\end{equation}
\end{proposition}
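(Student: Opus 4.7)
The plan is to bound the left-hand side of~(\ref{ieq:key}) by the triple-sum decomposition displayed immediately before the statement,
$$
  \sum_{u'\in\Hat\sigma_\eps(u)} f(u') \le \sum_{v\in\cV_\eps(u)} \sum_{L'\in\cL^*(v)} \sum_{u'\in\cU_\eps(v,L')} f(u'),
$$
where $f(u')=(\eps(u)/\eps(u'))^r(\diam\Delta(u')/\diam\Delta(u))^s$, and then to evaluate the triple sum inside-out.  Using $\diam\Delta(w)\asymp\eps(w)^{3/2}/|w|^{3/2}$ (which comes from~(\ref{cor:diam}) combined with Definition~\ref{def:eps(v)}) together with the relation $\eps(u')=|L'|^{2/3}|u'|^{-1/3}$ (valid for $u'\in\cU_\eps(v,L')$ since then $L(u')=L'$), the summand rewrites as
$$
  f(u')\asymp \eps(u)^{r-3s/2}\,|u|^{3s/2}\,|L'|^{(3s-2r)/3}\,|u'|^{-\alpha},\qquad\alpha:=2s-r/3,
$$
so that the $u'$-dependence is reduced to a single power of $|u'|$.

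First I would handle the innermost sum.  The elements of $\cU_\eps(v,L')$ lie in an arithmetic progression in the primitive lattice $L'$ with common difference $v$, and the constraint $u'\in Q_\eps$ translates via $\eps(u')<\eps$ into $|u'|>|L'|^2/\eps^3$.  A geometric-series (or integral) estimate then gives
$$
  \sum_{u'\in\cU_\eps(v,L')}|u'|^{-\alpha}\ls\frac{(|L'|^2/\eps^3)^{1-\alpha}}{|v|(\alpha-1)},
$$
which converges precisely when $\alpha>1$, i.e.\ $r<6s-3$; this produces the factor $(6s-3-r)^{-1}$ and reduces the remaining $|L'|$-dependence, after combining with $|L'|^{(3s-2r)/3}$ from $f(u')$, to $|L'|^{2-3s}\eps^{6s-3-r}$.

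Next I would sum over $L'\in\cL^*(v)$.  Via the exact sequence~(\ref{exact:seq}) and Lemma~\ref{lem:primitive}, $\cL^*(v)$ is identified with the primitive elements of the rank-$2$ lattice $\cL_\Z(v)\subset E_+$ of covolume $1/|v|$; the crucial point is that the rescaling $\|L\|_{\cL(v)}=|L|/|v|$ of~(\ref{eq:norms}) converts the Minkowski density in the $\cL(v)$-norm into density $\asymp R/|v|$ in the $|\cdot|$-variable.  With the lower bound $|L'|\ge|\Hat L(v)|$ this yields
$$
  \sum_{L'\in\cL^*(v)}|L'|^{2-3s}\ls\frac{|\Hat L(v)|^{4-3s}}{|v|(3s-4)},
$$
convergent precisely when $s>4/3$, furnishing the first $(3s-4)^{-1}$.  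Since $v\in\cV_\eps(u)\subset L(u)$ forces $L(u)\in\cL(v)$ and hence $|L(v)|\le|L(u)|$, Lemma~\ref{lem:Hat:L} gives $|\Hat L(v)|\gs|v|/|L(u)|$, so $|\Hat L(v)|^{4-3s}\le|v|^{4-3s}|L(u)|^{3s-4}$.

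The outer sum over $v\in\cV_\eps(u)$ is the final step.  Collecting the $|v|^{-2}$ that has accumulated from the two inner sums reduces the summand to a multiple of $|v|^{2-3s}|L(u)|^{3s-4}$.  Parametrising $v=au+b\Tilde v$ as in Definition~\ref{def:V(u)}, with $|v|\asymp a|u|$ and $\asymp a$ admissible $b$ per $a\ge 1$, I obtain
$$
  \sum_{v\in\cV_\eps(u)}|v|^{2-3s}\ls|u|^{2-3s}\sum_{a\ge1}a^{3-3s}\ls\frac{|u|^{2-3s}}{3s-4},
$$
once more convergent exactly when $s>4/3$, delivering the second $(3s-4)^{-1}$.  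Substituting $|L(u)|=\eps(u)^{3/2}|u|^{1/2}$ and multiplying all accumulated factors, the powers of $|u|$ cancel identically, and the remaining factors combine to give~(\ref{ieq:key}).  The main obstacle is the density conversion in Step~2: using the naive $E_+$-density $|v|R$ in place of the correct $R/|v|$ would make the $v$-sum diverge throughout $4/3<s\le 2$, and it is precisely the extra $|v|^{-1}$ picked up from $|L|=|v|\cdot\|L\|_{\cL(v)}$ that allows the two $(3s-4)^{-1}$ factors to emerge symmetrically from the successive-minima structure of $\cL_\Z(v)$, one via the lower bound on $|\Hat L(v)|$ and the other via $|L(v)|\le|L(u)|$.
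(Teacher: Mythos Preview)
Your proposal is correct and follows essentially the same triple-sum approach as the paper, with the same convergence thresholds producing the factors $(6s-3-r)^{-1}$ and $(3s-4)^{-2}$, and the same final cancellation of the powers of $|u|$ after substituting $|L(u)|=\eps(u)^{3/2}|u|^{1/2}$. The only difference is in the middle sum over $L'\in\cL^*(v)$: the paper parametrizes $L'=a'(\Hat L(v)+mL(v))+b'L(v)$ and sums over $m$ and $a'$ explicitly, whereas you use a lattice-point density argument; both yield the same bound $\ls|L(v)|^{3s-4}/((3s-4)|v|^{3s-3})$, and your observation $|L(v)|\le|L(u)|$ (from $v\in L(u)$) is exactly the inequality the paper uses implicitly when it writes $|L|$ for both quantities in its final chain.
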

\begin{proof}
Given $v\in\cV_\eps(u)$ there are $a,b\in\Z$ with $|b|<a$ and $\Tilde v\in Q_\eps$ 
  such that $v=au+b\Tilde v$, $|\Tilde v|<\eps|u|$, $L(u)=\Z u+\Z\Tilde v$ 
  and $L(u)\neq L(\Tilde v)$.  
Note that $\Tilde v\in Q_\eps$ implies $|\Hat L(\Tilde v)|>\eps^{-3/2}|\Tilde v|^{1/2}$ 
  and since $L(u)\neq L(\Tilde v)$, we have $|L(u)|\ge|\Hat L(\Tilde v)|$, 
  so that $u\in Q_\eps$ now implies $$|u| > \eps^{-3}|L(u)|^2 > \eps^{-6}|v|$$  
  so that $$|v|\asymp a|u|.$$  
Since $|\Tilde v|<|u|$ and $(u,\Tilde v)$ is an integral basis for $L(u)$, 
  there are at most two possibilities for $\Tilde v$, so that given $u$ and 
  the positive integer $a$, there are at most $O(a)$ possibilities for $v$.  
It follows that for any $q>2$ 
\begin{equation}
\label{ieq:sum:v}
 \sum_{v\in\cV_\eps(u)} \frac{|u|^q}{|v|^q} \ls \sum_{a}\frac{1}{a^{q-1}} \asymp \frac{1}{q-2}.
\end{equation}

Let $\cL_+(v)$ be the set of elements in $\cL(v)$ considered with 
  orientations, and think of it as a subset of $\Wedge^2\Z^3$.  
Note that addition is defined for those pairs $L,L'\in\cL_+(v)$ whose 
  $\Z$-span contains all of $\cL_+(v)$.  
Let $L$ and $\Hat L$ be the elements in $\cL_+(v)$ corresponding to a 
  fixed choice of orientation for $L(v)$ and $\Hat L(v)$, respectively.  
Each $L'\in\cL^*(v)$ can be oriented so that, as an element in $\cL_+(v)$ 
  we have $L'=\Tilde a\Hat L+\Tilde b L$ for some (relatively prime) 
  integers $\Tilde a,\Tilde b$ with $\Tilde a>0$.  
Let $$\Hat L_m=\Hat L + m L,\quad m\in\Z.$$  
There is a unique integer $m$ such that $L'=L_m$ or $L'$ is a 
  postive linear combination of $\Hat L_m$ and $\Hat L_{m+1}$.  
In any case, for each $L'\in\cL^*(v)$ there is an integer $m$ (and an 
  orientation for $L'$) such that $$L'=a'\Hat L_m+b'L \quad a'>b'\ge0.$$  
Note that $v\in Q_\eps$ implies 
  $$|\Hat L_m|\ge |\Hat L| > \eps^{-3/2}|v|^{1/2} > \eps^{-3}|L|$$ 
  so that $$|L'|\asymp a'|\hat L_m|.$$  
Let $N=\left\lfloor\frac{|\Hat L|}{|L|}\right\rfloor$ so that 
  $|\Hat L_m|\asymp|L|(N+|m|)$ and 
\begin{align*}
  \sum_{m\in\Z}\frac{|\Hat L|^p}{|\Hat L_m|^p}
        &\asymp \frac{|\Hat L|^p}{|L|^p}\sum_{m\ge1}\frac{1}{(N+m)^p} \\
        &\asymp \frac{|\Hat L|^p}{|L|^p}\left(\sum_{m=1}^N\frac{1}{N^p} 
                     + \sum_{m>N}\frac{1}{m^p}\right)\\ 
        &\asymp \frac{|\Hat L|^p}{|L|^p}\left(\frac{1}{N^{p-1}} 
                     + \frac{1}{(p-1)N^{p-1}}\right) 
         \asymp \frac{p}{p-1}\frac{|\Hat L|}{|L|}.  
\end{align*}
Since there are at most $O(a')$ possibilities for $L'$ given $v$, $m$ and $a'$, 
  and since $|L||\Hat L|\asymp|v|$, it follows that for any $q>2$ 
\begin{equation}
\label{ieq:sum:L'}
  \sum_{L'\in\cL^*(v)} \frac{1}{|L'|^q} \ls 
      \sum_{m\in\Z}\frac{1}{|\Hat L_m|^q} \sum_{a'}\frac{1}{(a')^{q-1}} 
      \asymp \frac{1}{q-2}\frac{|L|^{q-2}}{|v|^{q-1}}.  
\end{equation}

Associate to each $u'\in\cU_\eps(v,L')$ the positive integer 
  $$c = \left\lceil\frac{|u'|}{|v|}\right\rceil > \frac{\eps^{-3}|L'|^2}{|v|} 
     \ge \eps^{-3}\frac{|\Hat L|^2}{|v|} \ge \eps^{-3}\frac{|v|}{|L|^2} > \eps^{-6}.$$ 
Since $|u'|\asymp c|v|$ and there are $2$ possibilities for $u'$, given $v$, $L'$ and $c$, 
  and since $c>\frac{\eps^{-3}|L'|^2}{|v|}$, it follows that for any $p>1$ 
\begin{equation}\label{ieq:sum:u'}
  \sum_{u'\in\cU_\eps(v,L')} \frac{1}{|u'|^p} 
    \asymp \frac{1}{|v|^p} \sum_c \frac{1}{c^p} 
    \asymp \frac{\eps^{3p-3}}{(p-1)|v||L'|^{2p-2}}.  
\end{equation}

Using (\ref{ieq:sum:u'}), (\ref{ieq:sum:L'}) and (\ref{ieq:sum:v}) 
  with $p=2s-\frac{r}{3}$ and $q=3s-2$, we obtain 
\begin{align*}
\sum_{v\in\cV_\eps(u)} \sum_{L'\in\cL^*(v)} \sum_{u'\in\cU_\eps(v,L')} 
   &\left(\frac{|L'|}{|L|}\right)^{s-\frac{2r}{3}} 
                     \left(\frac{|u|}{|u'|}\right)^{2s-\frac{r}{3}} \\ 
   \ls \sum_{v\in\cV_\eps(u)} \sum_{L'\in\cL^*(v)} 
       &\frac{(6s-3-r)^{-1}\eps^{6s-3-r}|u|^{2s-\frac{r}{3}}}
                           {|L|^{s-\frac{2r}{3}}|v||L'|^{3s-2}} \\
   \ls \sum_{v\in\cV_\eps(u)} 
       &\frac{(6s-3-r)^{-1}\eps^{6s-3-r}|u|^{2s-\frac{r}{3}}|L|^{2s-4+\frac{2r}{3}}}
                             {(3s-4)|v|^{3s-2}} \\
   \ls \quad &\frac{(6s-3-r)^{-1}\eps^{6s-3-r}|L|^{2s-4+\frac{2r}{3}}}
             {(3s-4)^2|u|^{s-2+\frac{r}{3}}} 
\end{align*}
  and this completes the proof of the proposition.  
\end{proof}

Proposition~\ref{prop:key} implies there exists $C>0$ such that for any $s>\frac{4}{3}$ 
\begin{equation}\label{ieq:first:accel}
  \sum_{u'\in\Hat\sigma_\eps(u)} \left(\frac{\diam\Delta(u')}{\diam\Delta(u)}\right)^s 
      \le \frac{C\eps^{6s-3}}{(3s-4)^2(\eps(u))^{6-3s}}.  
\end{equation}
However, this still does not imply $s(\cB_\eps,Q_\eps,\Hat\sigma_\eps)<\infty$ for any $\eps>0$.  

\subsection{Second acceleration}
Let $\Hat\sigma'_\eps\subset Q\times Q$ be the set of all pairs $(u,u')$ 
  satisfying $u\in Q_\eps$ and $u' \in \bigcup_{j\ge1}\sigma''_j(u)$ where 
\begin{align*}
 \sigma''_1(u) &:= \Hat\sigma_{\eps(u)}(u), & 
 \sigma''_j(u) &:= \bigcup\{\Hat\sigma_{\eps(u)}(u'):u'\in\sigma'_{j-1}(u)\}, \\
 \sigma'_1(u)  &:= \Hat\sigma_\eps(u), & 
 \sigma'_j(u)  &:= \bigcup\{\Hat\sigma_\eps(u'):u'\in\sigma'_{j-1}(u)\}.  
\end{align*}

\begin{proposition}\label{prop:hat:sigma'}
$s(\cB_\eps,Q_\eps,\Hat\sigma'_\eps)=\frac{4}{3}+O(\eps^{3/2})$.  
\end{proposition}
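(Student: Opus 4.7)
The plan is to iterate the weighted estimate (\ref{ieq:key}) of Proposition~\ref{prop:key} along the chains defining $\Hat\sigma'_\eps$, with a carefully chosen exponent $r$ that eliminates the dependence on $\eps(u)$ in the bound. Specifically, I would take $r := 6-3s$ so that the factor $(\eps(u))^{6-3s-r}$ in the denominator of (\ref{ieq:key}) becomes trivial, yielding the uniform bound
$$\sum_{u' \in \Hat\sigma_\eps(u)} \left(\frac{\diam\Delta(u')}{\diam\Delta(u)}\right)^s \left(\frac{\eps(u)}{\eps(u')}\right)^{6-3s} \le K(\eps)$$
for every $u \in Q_\eps$, where $K(\eps) := C\eps^{9s-9}/\bigl((9s-9)(3s-4)^2\bigr)$; the same estimate with $\eps$ replaced by $\eps(u)$ gives a uniform bound $K(\eps(u))$ for sums over $\Hat\sigma_{\eps(u)}$.

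Next, I telescope along chains. For any $\Hat\sigma_\eps$-admissible chain $u = u_0 \to u_1 \to \cdots \to u_j$, the product of the per-step weighted expressions collapses to $(\diam\Delta(u_j)/\diam\Delta(u))^s(\eps(u)/\eps(u_j))^{6-3s}$ irrespective of the intermediate values of $\eps(u_i)$ --- this is precisely why $r = 6-3s$ is the right choice. Iterating the uniform bound then gives
$$\sum_{u_j \in \sigma'_j(u)} \left(\frac{\diam\Delta(u_j)}{\diam\Delta(u)}\right)^s \left(\frac{\eps(u)}{\eps(u_j)}\right)^{6-3s} \le K(\eps)^j,$$
and for chains in $\sigma''_j(u)$, where the final step lies in $\Hat\sigma_{\eps(u)}$, that last step contributes a factor $K(\eps(u))$ uniformly, producing the bound $K(\eps(u))\,K(\eps)^{j-1}$.

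At this point I use the crucial property that every $u' \in \Hat\sigma'_\eps(u)$ satisfies $\eps(u') < \eps(u)$, coming from the terminal $\Hat\sigma_{\eps(u)}$-step. For $s$ near $\tfrac{4}{3}$ we have $6-3s > 0$, so $(\eps(u)/\eps(u'))^{6-3s} \ge 1$ and the weight may simply be discarded, passing from the weighted bound to the unweighted one. Summing the resulting geometric series over $j \ge 1$ then gives
$$\sum_{u' \in \Hat\sigma'_\eps(u)} \left(\frac{\diam\Delta(u')}{\diam\Delta(u)}\right)^s \le \sum_{j \ge 1} K(\eps(u))\,K(\eps)^{j-1} = \frac{K(\eps(u))}{1-K(\eps)},$$
provided $K(\eps) < 1$. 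Since $K$ is increasing in its argument (for $s > 1$) and $\eps(u) \le \eps$, the right-hand side is at most $1$ whenever $K(\eps) \le \tfrac{1}{2}$.

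Finally, I would check the scaling: setting $s = \tfrac{4}{3} + c\eps^{3/2}$ gives $3s-4 = 3c\eps^{3/2}$, $9s-9 = 3 + O(\eps^{3/2})$, and $\eps^{9s-9} \asymp \eps^3$, so $K(\eps) \asymp 1/c^2$. Choosing $c$ large enough makes $K(\eps) \le \tfrac{1}{2}$, which establishes $s(\cB_\eps, Q_\eps, \Hat\sigma'_\eps) \le \tfrac{4}{3} + O(\eps^{3/2})$. The main obstacle is the bookkeeping that the distortions $\eps(u_i)$ at intermediate elements on a $\sigma'_{j-1}$-chain may exceed $\eps(u)$; the cancellation afforded by the weighted estimate with $r = 6-3s$ is precisely what neutralises this, while the strict drop $\eps(u') < \eps(u)$ at the terminal $\Hat\sigma_{\eps(u)}$-step is what lets us discard the weight in the end.
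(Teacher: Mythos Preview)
Your proposal is correct and follows essentially the same route as the paper. Both arguments apply Proposition~\ref{prop:key} with $r=6-3s$ and telescope the resulting weighted inequality along the chains defining $\Hat\sigma'_\eps$, then sum a geometric series to obtain the bound $\tfrac43+O(\eps^{3/2})$; the only cosmetic difference is that the paper handles the terminal $\Hat\sigma_{\eps(u)}$-step by invoking the $r=0$ case of Proposition~\ref{prop:key} (which places the factor $(\eps(u)/\eps(u_{j-1}))^{6-3s}$ on the right-hand side, ready to telescope), whereas you use $r=6-3s$ there as well and then discard the weight via $\eps(u')<\eps(u)$---these amount to the same thing.
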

\begin{proof}
To simplify notation, we denote the diameter of a set by $|\cdot|$.  
Given $s>\frac{4}{3}$, we apply Proposition~\ref{prop:key} to ensure that 
  if $\eps>0$ is sufficiently small then, by (\ref{ieq:first:accel}), 
  for any $u\in Q_\eps$ 
\begin{align*}
  \sum_{u'\in\Hat\sigma_\eps(u)} &\frac{|\Delta(u')|^s}{|\Delta(u)|^s} \le 
    \frac{1}{2}\left(\frac{\eps}{\eps(u)}\right)^{6-3s} \quad \text{and} \\ 
  \sum_{u'\in\Hat\sigma_\eps(u)} &\left(\frac{\eps(u)}{\eps(u')}\right)^{6-3s}
    \frac{|\Delta(u')|^s}{|\Delta(u)|^s} \le \frac{C\eps^{9s-9}}{(3s-4)^2} 
    \le \frac{1}{2}.  
\end{align*}
We may choose $\eps$ so that $\eps^{9s-9}\asymp(3s-4)^2$; hence, 
  $s=\frac{4}{3}+O(\eps^{3/2}).$  

For each $u'\in\sigma''_j(u)$ there are $u_1,\dots,u_{j-1}\in Q_\eps$ such that 
  $$u_1\in\Hat\sigma_\eps(u),\;\; \dots,\;\; u_{j-1}\in\Hat\sigma_\eps(u_{j-2}), 
    \quad\text{and}\quad u'\in\Hat\sigma_{\eps(u)}(u_{j-1}).$$  
\begin{align*}
  \sum_{u'\in\sigma''_j(u)}\frac{|\Delta(u')|^s}{|\Delta(u)|^s} 
  &\le \sum_{u_1}\frac{|\Delta(u_1)|^s}{|\Delta(u)|^s} \dots 
       \sum_{u_{j-1}}\frac{|\Delta(u_{j-1})|^s}{|\Delta(u_{j-2})|^s} 
  \sum_{u'}\frac{|\Delta(u')|^s}{|\Delta(u_{j-1})|^s} \\ 
  &\le \frac{1}{2}\sum_{u_1}\frac{|\Delta(u_1)|^s}{|\Delta(u)|^s} \dots 
       \sum_{u_{j-1}}\frac{|\Delta(u_{j-1})|^s}{|\Delta(u_{j-2})|^s} 
       \left(\frac{\eps(u)}{\eps(u_{j-1})}\right)^{6-3s} \\ 
  &\le \frac{1}{2^2}\sum_{u_1}\frac{|\Delta(u_1)|^s}{|\Delta(u)|^s} \dots 
       \sum_{u_{j-2}}\frac{|\Delta(u_{j-2})|^s}{|\Delta(u_{j-3})|^s} 
       \left(\frac{\eps(u)}{\eps(u_{j-2})}\right)^{6-3s} \\ 
  &\le \dots \le \frac{1}{2^j}
\end{align*}
  so that   
  $$\sum_{u'\in\Hat\sigma'_\eps(u)}\frac{|\Delta(u')|^s}{|\Delta(u)|^s} \le 
    \sum_{j\ge1}\sum_{u'\in\sigma''_j(u)}\frac{|\Delta(u')|^s}{|\Delta(u)|^s} 
    \le \sum_{j\ge1}\frac{1}{2^j} \le 1$$ 
  and therefore $s(\cB_\eps,Q_\eps,\Hat\sigma_\eps)\le s$.  
\end{proof}

Now we verify that $(\cB_\eps,Q_\eps,\Hat\sigma'_\eps)$ is a self-similar 
  covering of $\Sing^*(2)$.  
Let $\cE(\bx)$ be a subsequence $(w_i)$ of $\Hat\Sigma(\bx)$ such that 
  $\eps(w_i)$ is strictly decreasing to zero as $i\to\infty$, where the 
  initial element $w_0$ is chosen so that for all $v$ that occurs after 
  $w_0$ in the sequence $\Sigma(\bx)$ we have $\eps(v)<\eps$.  
The sequence $\cE(\bx)$ is $\Hat\sigma'_\eps$-admissible by construction.  
It follows that $(\cB_\eps,Q_\eps,\Hat\sigma'_\eps)$ is a self-similar 
  covering of $\Sing^*(2)$, and by Theorem~\ref{thm:gen:upper} we can 
  now conclude that $$\Hdim \Sing^*(2)\le\frac{4}{3}.$$  

We now describe how the preceding argument can be modified to give 
  an upper bound estimate on $\Hdim \DI_\delta(2)$.  
Let $\DI_\delta^*(2)$ denote the set $\DI_\delta(2)$ with all rational 
  lines removed.  
Extend the map $\cE$ to all of $\DI_\delta^*(2)$ by choosing the 
  subsequence $(w_i)$ of $\Hat\Sigma(\bx)$ so that $\eps(w_i)$ is a 
  monotone sequence, and such that $\eps(w_0)<2\lim_i\eps(w_i)$.  
Modify the definition of $\Hat\sigma'_\eps$ by replacing the subscript $\eps(u)$ 
  in the formula for $\sigma''_j$, $j\ge1$ with the expression $2\eps(u)$.  
Then $(\cB_\eps,Q_\eps,\Hat\sigma_\eps)$ is a self-similar covering of 
  $\DI_\delta^*(2)$ provided $\delta=\frac{\eps^{3/2}}{2}$.  
Theorem~\ref{thm:gen:upper} and Proposition~\ref{prop:hat:sigma'} now 
  imply $$\Hdim \DI_\delta(2)=\frac{4}{3}+O(\delta).$$

\section{Lower bound calculation}\label{S:lower}

\begin{lemma}\label{lem:nested}
Let $0<\eps<\tfrac12$.  
Suppose $u\in Q$, $L'\in\cL^*(u)$, and $u'\in L'$ is such that 
  $L'=\Z u'+\Z u$ and $|u'|>\eps^{-3}|L'|^2$.  
Then $L'=L(u')$, $u'\in Q_\eps$, 
  and $$\overline{\Delta(u')}\subset\Delta(u).$$  
Moreover, if $u\in Q_\eps$ then $|u'|>\eps^{-6}|u|$.  
\end{lemma}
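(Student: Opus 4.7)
The plan is to establish the four conclusions in order; the containment $\overline{\Delta(u')}\subset\Delta(u)$ is the main effort.

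The first two claims reduce to Lemma~\ref{lem:Hat:L}. Since $L'=\Z u+\Z u'$ is a primitive two-dimensional sublattice of $\Z^3$ containing $u'$, it belongs to $\cL(u')$, so $|L(u')|\le|L'|$. The hypothesis $|u'|>\eps^{-3}|L'|^2$ then gives $|L(u')|^2<\eps^3|u'|$, which by Definition~\ref{def:eps(v)} is $\eps(u')<\eps$, i.e.\ $u'\in Q_\eps$. If in addition $L'\neq L(u')$, then $L'\in\cL^*(u')$ and hence $|L'|\ge|\Hat L(u')|$; combined with the first inequality of (\ref{ieq:Hat:L:eps}) for $u'$, this gives $|L'|^2>\eps^{-3}|u'|$, which plugged back into the hypothesis yields $|u'|>\eps^{-6}|u'|$, contradicting $\eps<1$. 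Hence $L'=L(u')$, and Theorem~\ref{thm:diam} then gives $\overline{\Delta(u')}\subset\overline{B(\dot u',\,2|L'|/|u'|^2)}$.

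For the containment I would apply Lemma~\ref{lem:realise}: for $x$ in this closed ball and $w\in Q\setminus\{u\}$ with $|w|\le|u|$, I must show $\hor_x(w)\ge\hor_x(u)$, strict if $|w|<|u|$. Using $\dist(x,\dot u')\le 2|L'|/|u'|^2$, the triangle inequality yields
\[
  \hor_x(u)\le\frac{|L'|}{|u'|}\Bigl(1+\tfrac{2|u|}{|u'|}\Bigr), \qquad
  \hor_x(w)\ge\frac{|w\wedge u'|}{|u'|}-\frac{2|w||L'|}{|u'|^2},
\]
so everything reduces to a lower bound on $|w\wedge u'|$. If $w\in L'$, write $w=\alpha u-bu'$ with $b\ge1$; the constraint $0<|w|\le|u|$ forces $\alpha$ to be the unique integer in $(b|u'|/|u|,\,1+b|u'|/|u|]$, so $|w\wedge u'|=\alpha|L'|$ with $\alpha\ge 2$. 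If $w\notin L'$, then $w\wedge u'$ is not a $\Z$-multiple of $L(u')=L'$, hence $|w\wedge u'|\ge|\Hat L(u')|>\eps^{-3}|L'|$ by the second half of (\ref{ieq:Hat:L:eps}). In both cases the required strict inequality closes because the hypothesis, combined with $|L'|\ge|\Hat L(u)|$ and the Minkowski identity $|L(u)||\Hat L(u)|\asymp|u|$ for the rank-$2$ lattice $\cL_\Z(u)$ of volume $1/|u|$, forces $|u'|/|u|$ to be large enough (for $\eps<\tfrac12$) that the triangle-inequality error terms $2|u|/|u'|$ are dominated by the integer jump $\alpha-1\ge 1$ in the first case and by $\eps^{-3}-1\ge 7$ in the second.

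Finally, for the moreover part, when $u\in Q_\eps$ the first inequality of (\ref{ieq:Hat:L:eps}) applied to $u$ gives $|\Hat L(u)|^2>\eps^{-3}|u|$, and $L'\in\cL^*(u)$ forces $|L'|^2\ge|\Hat L(u)|^2>\eps^{-3}|u|$, so $|u'|>\eps^{-3}|L'|^2>\eps^{-6}|u|$. The main obstacle is the balancing in the containment step: the triangle-inequality error terms, the integer gap $\alpha\ge 2$, and the second-minimum bound on $|\Hat L(u')|$ must all be reconciled simultaneously, and this is possible only because the Minkowski identity promotes $|u'|>\eps^{-3}|L'|^2$ to quantitative separation between $|u|$ and $|u'|$.
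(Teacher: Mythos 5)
Your proof is correct in outcome, but the containment $\overline{\Delta(u')}\subset\Delta(u)$ is obtained by a genuinely different, and more laborious, route than the paper's. The paper applies Theorem~\ref{thm:diam} \emph{twice}: the outer inclusion $\overline{\Delta(u')}\subset\overline{B(\Dot u',2r')}$ with $r'=|L'|/|u'|^2$, and the inner inclusion $B(\Dot u,r/2)\subset\Delta(u)$ with $r=|L(u)|/|u|^2$, so that everything reduces to the purely metric ball containment $\overline{B(\Dot u',2r')}\subset B(\Dot u,r/2)$; this is checked by the estimates $\dist(\Dot u,\Dot u')<\eps^3 r$ and $2r'<4\eps^6 r$ (using $|L'|\ge|\Hat L(u)|\ge|u|/|L(u)|$ together with (\ref{ieq:eps<2})), whose sum is below $r/2$ once $\eps<\tfrac12$. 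You instead use only the outer inclusion and then return to Lemma~\ref{lem:realise}, verifying pointwise that each $x$ in the closed ball satisfies $\hor_x(w)>\hor_x(u)$ for every competitor $w$, with a case split on $w\in L'$ versus $w\notin L'$; in effect you re-derive, parametrised from $\Dot u'$ rather than from $\Dot u$, a version of the inner half of Theorem~\ref{thm:diam}. The first two claims and the ``moreover'' part agree with the paper in substance. Two small points should be tightened: (a) in the $w\in L'$ case you only treat $w=\alpha u-bu'$ with $b\ge1$; the symmetric possibility $w=\alpha u+bu'$ with $b\ge1$ (forcing $\alpha<0$) also satisfies $0<|w|\le|u|$ and needs the analogous lower bound on $|\alpha|$; (b) the assertion that the hypotheses ``force $|u'|/|u|$ to be large enough'' should be made quantitative, e.g.\ $|u'|>\eps^{-3}|L'|^2\ge\eps^{-3}|u|^2/|L(u)|^2>\eps^{-3}|u|/2\ge 4|u|$ via (\ref{ieq:eps<2}) and $\eps\le\tfrac12$, which then gives $\alpha\ge 5>1+4|u|/|u'|$.
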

\begin{proof}
Since the norm of $L'\in\cL(u')$ is 
  $$\|L'\|_{\cL(u')} = \frac{|L'|}{|u'|^{1/2}} < \eps^{3/2} < 1$$ 
  we have $L'=L(u')$ and $\eps(u')<\eps$ so that $u'\in Q_\eps$.  
Let $L=L(u)$ and note that since $L'\neq L$, we have 
  $$|L'|\ge|\Hat L(u)|\ge\frac{|u|}{|L|}.$$  
Then 
  $$\dist(\Dot u,\Dot u') = \frac{|u\wedge u'|}{|u||u'|} 
     < \frac{\eps^3}{|u||L'|} \le \frac{\eps^3|L|}{|u|^2}.$$  
The fact that the Euclidean length of the shortest nonzero vector 
  in any two-dimensional unimodular lattice is universally bounded 
  above by $\sqrt{2}$ implies that 
\begin{equation}\label{ieq:eps<2}
  \eps(u)^3<2 \quad\text{for any}\quad u\in Q.  
\end{equation}
Therefore, 
  $$\frac{|L'||u|^2}{|L||u'|^2} \le \frac{\eps^6|u|^2}{|L||L'|^3} 
      \le \frac{\eps^6|L|^2}{|u|} < 2\eps^6$$ 
  so that 
  $$\frac{\eps|L|}{|u|^2}+\frac{2|L'|}{|u'|^2} 
      < (\eps^3+4\eps^6)\frac{|L|}{|u|^2} < \frac{|L|}{2|u|^2}.$$  
Theorem~\ref{thm:diam} now implies $\overline{\Delta(u')}\subset\Delta(u)$.  
If $u\in Q_\eps$ then 
  $$|u'|>\eps^{-3}|L'|^2>\frac{\eps^{-3}|u|^2}{|L|^2}>\eps^{-6}|u|.$$  
\end{proof}

\begin{definition}\label{def:N(u)}
For each $u\in Q$, let $$\cN_\eps(u)$$ be the set of $u'\in Q$ 
  such that $\Z u'+\Z u\in\cL^*(u)$ and $|u'|>\eps^{-3}|u\wedge u'|^2$.  
\end{definition}

Note that $\cN_\eps(u)\subset Q_\eps$, by definition.  

\begin{theorem}\label{thm:suff}
Let $0<\eps<\tfrac13$.  Suppose $(u_k)$ is a sequence in $Q$ satisfying 
  $u_{k+1}\in\cN_\eps(u_k)$ for all $k\ge0$.  Then 
\begin{enumerate}
\item[(a)] The limit $\bx:=\lim_k\Dot u_k$ exists and $u_k\in\Sigma(\bx)$ for all $k$.  
\item[(b)] $\bx\in\DI_\delta(2)$, where $\delta=2\eps^{3/2}$.  
\item[(c)] If $\eps(u_k)\to0$ as $k\to\infty$ then $\bx\in\Sing(2)$.  
\item[(d)] For all sufficiently large $t$ 
\begin{equation}\label{ieq:suff}
  W(t) - \log(1-\eps^6) \le W_\bx(t) \le W(t) 
\end{equation}
  where $$W(t)=\log\ell(g_th_\bx\{u_k\})=\log\min_{k\ge0}\|g_th_\bx u_k\|'.$$  
\end{enumerate}
\end{theorem}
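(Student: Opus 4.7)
The plan is to establish (a)--(c) from the machinery already in place and then address (d) with a structural argument. For (a), Lemma~\ref{lem:nested} applied to each pair $(u_k,u_{k+1})$ provides both the nested inclusion $\overline{\Delta(u_{k+1})}\subset\Delta(u_k)$ and the growth $|u_{k+1}|>\eps^{-6}|u_k|$; consequently $|u_k|\to\infty$ geometrically and (\ref{cor:diam}) forces $\diam\Delta(u_k)\to0$, so the nested compact sets $\overline{\Delta(u_k)}$ intersect in a single point $\bx=\lim_k\Dot u_k$, and $\bx\in\Delta(u_k)$ for every $k$ gives $u_k\in\Sigma(\bx)$ by definition of $\Delta$.

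For (b), given $T>|u_0|$ I choose $k$ with $|u_k|<T\le|u_{k+1}|$ and take $(\bp,q):=u_k$. Theorem~\ref{thm:best:1} applied to $u_k,u_{k+1}\in\Sigma(\bx)$ yields $\dist(u_k,\bx)<2\dist(u_k,u_{k+1})$, so
\[
\|q\bx-\bp\|<\frac{2|u_k\wedge u_{k+1}|}{|u_{k+1}|}<\frac{2\eps^{3/2}}{|u_{k+1}|^{1/2}}\le\frac{\delta}{T^{1/2}},
\]
where the middle step uses $|u_k\wedge u_{k+1}|^2<\eps^3|u_{k+1}|$ from the $\cN_\eps$ hypothesis. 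Part (c) is a refinement: Lemma~\ref{lem:nested} gives $|u_k\wedge u_{k+1}|=|L(u_{k+1})|=\eps(u_{k+1})^{3/2}|u_{k+1}|^{1/2}$, sharpening the bound to $2\eps(u_{k+1})^{3/2}/T^{1/2}$; assuming $\eps(u_k)\to0$, this is smaller than any prescribed $\delta/T^{1/2}$ once $k$ is large, yielding $\bx\in\Sing(2)$.

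For (d), the inequality $W_\bx(t)\le W(t)$ is immediate from the definition of $\ell$. The content is the lower bound, which controls the extent to which the true shortest-vector length can undercut the minimum over the sequence $(u_k)$. The plan is as follows: fix $t$ large and let $w\in\Z^3\setminus\{0\}$ realize $W_\bx(t)$; then $w$ lies in $\Sigma(\bx)$ and, unless $w=\pm u_k$ for some $k$ (in which case the bound is trivial), is an intermediate local-minimum realizer sandwiched between two consecutive $u_k,u_{k+1}$ of the given sequence. I would first prove the structural claim that any such $w$ must lie in the primitive 2-dimensional sublattice $L(u_{k+1})=\Z u_k+\Z u_{k+1}$. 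For the immediate predecessor $v_{i(k+1)-1}$ of $u_{k+1}$ in $\Sigma(\bx)$, this follows by contradiction: if it lay outside $L(u_{k+1})$, the primitive sublattice it spans with $u_{k+1}$ (by Lemma~\ref{lem:consec}) would belong to $\cL^*(u_{k+1})$, and Lemma~\ref{lem:Hat:L} would force the local-minimum depth parameter of Theorem~\ref{thm:char} to satisfy $\eps_{i(k+1)}>\eps^{-1}>3$, contradicting the bound $\eps_j<\eps<1/3$ obtainable from Theorem~\ref{thm:char} together with (b). The estimate then reduces to a 2-dimensional computation inside $L(u_{k+1})$, where the height gap $|u_{k+1}|>\eps^{-6}|u_k|$ bounds how far $w$'s V-shaped function $t\mapsto\log\|g_th_\bx w\|'$ can dip below the envelope of the V-shapes of $u_k$ and $u_{k+1}$, producing the $\eps^6$ factor in the statement.

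The main obstacle is the backward propagation in the structural claim. While the first step, placing $v_{i(k+1)-1}$ in $L(u_{k+1})$, is a clean contradiction argument, iterating it along the chain $u_k=v_{i(k)},\ldots,v_{i(k+1)}=u_{k+1}$ to cover every intermediate $\Sigma(\bx)$-realizer is delicate because the minimal 2D sublattices $L(v_j)$ for intermediate $v_j$ may a priori differ from $L(u_{k+1})$. My intended workaround is to use Lemma~\ref{lem:consec} at each step to compare primitive 2D sublattices and then exploit the fact that two distinct primitive 2D sublattices of $\Z^3$ with the same 2D span must coincide, forcing the required inclusion once the spans are identified; with the structural claim in hand, the remaining two-dimensional quantitative estimate is a direct computation exploiting $|u_{k+1}|>\eps^{-6}|u_k|$.
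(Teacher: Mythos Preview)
Your treatment of (a)--(c) is correct and essentially matches the paper.  For (d) you take a genuinely different route, and while your outline could plausibly be made to work, it is considerably more delicate than necessary and leaves the obstacle you yourself flag unresolved.

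The paper does \emph{not} argue via $\Sigma(\bx)$ at all for (d).  Instead it proceeds as follows.  It is enough to verify the lower bound at each local maximum time $\tau$ of the auxiliary function $W$; at such a time there is an index $k$ with $\|g_\tau h_\bx u_k\|'=\|g_\tau h_\bx u_{k+1}\|'=:\eps'$, and one must check that \emph{every} nonzero $w\in g_\tau h_\bx\Z^3$ satisfies $\|w\|'\ge(1-\eps^6)\eps'$.  The argument splits on whether $w$ lies in the rank-two sublattice $\Lambda:=\Z(g_\tau h_\bx u_k)+\Z(g_\tau h_\bx u_{k+1})$.  For $w\in\Lambda$, the height gap $|u_{k+1}|>\eps^{-6}|u_k|$ gives $\|u_{k+1}\pm u_k\|'\ge(1-\eps^6)\eps'$, and any other nonzero combination has $\|\cdot\|'\ge\eps'$.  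For $w\notin\Lambda$, the crucial observation is a covolume bound: the Euclidean area of a fundamental domain for $\Lambda$ is at most $2(\eps')^2$, so any lattice vector off the plane of $\Lambda$ has Euclidean norm at least $(2\sqrt{2}(\eps')^2)^{-1}$, which exceeds $\eps'$ since $\eps'<3^{2/3}\eps<2^{-1/2}$.

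This covolume trick is what you are missing: it disposes of all $w$ outside $\Z u_k+\Z u_{k+1}$ in one stroke, with no need to locate intermediate best approximations inside the sublattice and hence no backward-propagation issue.  Your structural claim is in fact a \emph{consequence} of (d) rather than a step toward it.
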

\begin{proof}
Apply Lemma~\ref{lem:nested} with $L'=\Z u_{k+1}+\Z u_k$ to conclude that 
  $\cap_k\Delta(u_k)$ is nonempty, and $u_k\in Q_\eps$ for all $k\ge1$.  
Moreover, $|u_k|\to\infty$ so that $\diam\Delta(u_k)\to0$ and (a) follows.  
Lemma~\ref{lem:nested} also implies $L(u_{k+1})=\Z u_{k+1}+\Z u_k$, so that, 
  by Theorem~\ref{thm:best:1}, we have $$\eps_\bx(u_k,u_{k+1})^{3/2} \le 
  2\frac{|u_k\wedge u_{k+1}|}{|u_{k+1}|^{1/2}} = 2\eps(u_{k+1})^{3/2} < \delta.$$  
Hence, Lemma~\ref{lem:common} implies the local maxima of the piecewise 
  linear function $W$ are all bounded by $\log\delta$.  
Since $W_\bx\le W$ (by monotonicity of $\ell$), it follows that the local 
  maxima of $W_\bx$ are bounded by $\log\delta$, eventually.  
By Theorem~\ref{thm:char}, this means $\bx\in\DI_\delta(2)$, giving (b).  
If $\eps(u_k)\to0$ then $W_\bx(t)\to-\infty$, so that $\bx\in\Sing(2)$.  
  This proves (c).  

Since $W_\bx\le W$, the second inequality in (\ref{ieq:suff}) actually 
  holds for all $t$.  
For the first inequality, we consider a local maximum time $t$ for $W$.  
Then for some index $k$, if we set $u=g_th_\bx u_k$ and $u'=g_th_\bx u_{k+1}$ 
  then $\|u\|'=\|u'\|'$.  
The corresponding local maximum value is $\log\eps'$ where $\eps'$ 
  is the common $\|\cdot\|'$-length of $u$ and $u'$.  
To prove the first inequality in (\ref{ieq:suff}) it suffices to show 
  that for any $w\in g_th_\bx\Z^3$ we have 
\begin{equation}\label{ieq:short}
  \|w\|'\ge(1-\eps^6)\eps'.  
\end{equation}
Lemma~\ref{lem:nested} implies (for $k\ge1$) $|u'|>\eps^{-6}|u|$.  
Hence, $\|u'\pm u\|'\ge|u'\pm u|\ge(1-\eps^6)\eps'$.  Note that 
  $\|au'+bu\|'\ge\eps'$ for any pair of integers with $|a|\neq|b|$.  
This establishes (\ref{ieq:short}) for $w\in\Z u+\Z u'$.  
Let $\|\cdot\|_e$ denote the Euclidean norm on $\R^3$.  
For any $v\in\R^3$ we have $$\|v\|'\le\|v\|_e\le\sqrt{2}\|v\|'.$$  
The Euclidean area of a fundamental parallelogram for $\Z u+\Z u'$ 
  is $$\|u\wedge u'\|_e\le\|u\|_e\|u'\|_e\le2(\eps')^2$$ so that 
  for any $w\in g_th_\bx\Z^3\setminus(\Z u+\Z u')$ we have 
  $$\|w\|'\ge\frac{\|w\|_e}{\sqrt{2}}\ge\frac{1}{2\sqrt{2}(\eps')^2}$$ 
  which is $>\eps'$ since $$\eps' \le 3^{2/3}\eps(u_{k+1}) 
    < 3^{2/3}\eps < \frac{1}{\sqrt[3]{3}} < \frac{1}{\sqrt{2}}.$$  
Thus (\ref{ieq:short}) holds for all $w\in g_th_\bx\Z^3$.  
\end{proof}

We assume for each $u\in Q$ orientations for $L(u)$ and $\Hat L(u)$ have 
  been chosen so that we may think of them as elements of $\Wedge^2\Z^3$.  
\begin{definition}\label{def:psi}
Given $u\in Q$ and integers $a\ge b\ge 0$ such that $\gcd(a,b)=1$ we set 
  $$L'=a'\Hat L(u)+b'L(u).$$  
Additionally, given $0<\eps<1$ and an integer $c\ge1$ satisfying 
  $$M_\eps < c < 2M_\eps-1 \quad\text{where}\quad M_\eps=\frac{\eps^{-3}|L'|^2}{|u|}$$ 
  we define $\psi_\eps(u,a,b,c)$ to be the unique $u'\in Q$ such that 
  $$L'=u'\wedge u \quad\text{and}\quad \left\lfloor\frac{|u'|}{|u|}\right\rfloor=c.$$  
\end{definition}
Note that $\psi_\eps(u,a,b,c)\in\cN_\eps(u)$ because $L'=L(u')$.  
Note also that $c>M_\eps$ implies $\psi_\eps(u,a,b,c)\in Q_\eps$ 
  while $c<2M_\eps-1$ implies $\psi_\eps(u,a,b,c)\notin Q_{\eps/2}$.  
Therefore, we always have $$\psi_\eps(u,a,b,c)\in \cN_\eps(u)\cap Q'_\eps
  \quad\text{where}\quad Q'_\eps:=Q_\eps\setminus Q_{\eps/2}.$$  

\begin{lemma}\label{lem:spacing}
Let $0<\eps<2^{-7}$.  
If $u'=\psi_\eps(u,a,b,c)$ and $u''=\psi_\eps(u,a',b',c')$ are such that 
  $(a,b)\neq(a',b')$ or $|c-c'|\ge20$ then 
\begin{equation}\label{ieq:spacing}
  \dist(\Delta(u'),\Delta(u''))\ge\frac{\eps^9}{2^{11}N^3}\diam\Delta(u) 
\end{equation}
  where $N=\max(a,a')$.  
\end{lemma}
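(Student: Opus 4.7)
The plan is to reduce the spacing inequality to a lower bound on $\dist(\Dot u',\Dot u'')$.  By Theorem~\ref{thm:diam} we have $\Delta(u')\subset B(\Dot u',2|L(u')|/|u'|^2)$ and likewise for $u''$; a short substitution using $u',u''\in\cN_\eps(u)\cap Q'_\eps$ together with Lemma~\ref{lem:Hat:L} shows that the two radii are bounded, up to universal constants, by $\eps^9\diam\Delta(u)/a^3$ and $\eps^9\diam\Delta(u)/(a')^3$.  Thus it suffices to produce a lower bound on $\dist(\Dot u',\Dot u'')$ that exceeds the right-hand side of (\ref{ieq:spacing}) by enough to absorb these radii.

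First I would unwind the definition of $\psi_\eps$ to write $u'=\tilde u+cu$ and $u''=\tilde u'+c'u$, where $\tilde u,\tilde u'$ are the unique integer vectors with $\tilde u\wedge u=L(u')$, $\tilde u'\wedge u=L(u'')$, and heights in $[0,|u|)$.  Expanding gives
\begin{equation*}
  u'\wedge u'' \;=\; \tilde u\wedge\tilde u' \,+\, c'L(u') - cL(u'')
\end{equation*}
in $\Wedge^2\Z^3$, with $|u'\wedge u''|=|u'||u''|\dist(\Dot u',\Dot u'')$ by (\ref{eq:dist}).  In Case A where $(a,b)=(a',b')$, uniqueness of $\tilde u$ forces $\tilde u=\tilde u'$, so the error term vanishes and the main term collapses to $(c'-c)L(u')$ with $|c'-c|\ge 20$.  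In Case B where $(a,b)\neq(a',b')$, writing $L(u')=a\Hat L(u)+bL(u)$ and $L(u'')=a'\Hat L(u)+b'L(u)$ gives $c'L(u')-cL(u'')=(c'a-ca')\Hat L(u)+(c'b-cb')L(u)$; one then analyses these integer coefficients using $c,c'\in(M_\eps,2M_\eps-1)$ together with the quadratic relation $c'/c\asymp(a'/a)^2$ and the coupled identity $(c'a-ca')b-(c'b-cb')a=c(ab'-a'b)$ (where $ab'-a'b\ne0$ follows from coprimality and the normalisation $|b|\le a$, $|b'|\le a'$).  A reduced-basis estimate for $(L(u),\Hat L(u))$ then yields sub-case-wise lower bounds on $|c'L(u')-cL(u'')|$ that dominate the error $|\tilde u\wedge\tilde u'|\le|L(u')|+|L(u'')|\ls N|\Hat L(u)|$ by a factor of order $\eps^{-3}$, thanks to the hypothesis $\eps<2^{-7}$.

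Substituting $|u'|,|u''|\asymp M_\eps|u|$, $|L(u')|\asymp a|\Hat L(u)|$, and $|\Hat L(u)|\asymp|u|/|L(u)|$ then produces $\dist(\Dot u',\Dot u'')\gs\eps^3\diam\Delta(u)/(aa')$ in the generic Case B sub-case $a\neq a'$ (with an even better bound when $|a-a'|$ is large), $\gs\eps^6\diam\Delta(u)/a^2$ in the sub-case $a=a',\,b\neq b'$, and $\gs20\,\eps^9\diam\Delta(u)/N^3$ in Case A; in each situation subtracting the radii leaves the claimed bound, with $2^{11}$ absorbing all the multiplicative slack.  The main obstacle is the sub-case bookkeeping in Case B: individually $|c'a-ca'|$ and $|c'b-cb'|$ can each be as small as a universal constant, so it is only by combining the coupled identity with the quadratic relation between $c'/c$ and $a'/a$ imposed by the range $M_\eps<c,c'<2M_\eps-1$ that one extracts a systematic lower bound strong enough to dominate the error $|\tilde u\wedge\tilde u'|$; the constant $2^{11}$ is chosen precisely to absorb the various slacks introduced by this case analysis.
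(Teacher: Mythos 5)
The student takes a genuinely different route from the paper in Case B. The paper's Case B argument is geometric: it bounds from below the angle at $\Dot u$ subtended by $\pi_u(L')$ and $\pi_u(L'')$, via the covolume of the sublattice they span inside $\cL_\Z(u)$, and then multiplies by a lower bound on $\dist(\Dot u,\Dot u')$, $\dist(\Dot u,\Dot u'')$ (a law-of-sines estimate). The student instead expands $u'\wedge u''$ algebraically and reads off the coefficients of $L(u)$ and $\Hat L(u)$. Case A is handled identically in both (there $u'\wedge u''$ is essentially $(c'-c)L(u')$). Both routes can be made to work, but I would flag a few inaccuracies in your sketch.

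First, the decomposition into "main term" $c'L(u')-cL(u'')$ plus an "error" $\tilde u\wedge\tilde u'$ that is to be \emph{dominated} by a factor $\eps^{-3}$ is a somewhat awkward way to organise the estimate. Writing $|u'|/|u|=c+\lambda$, $|u''|/|u|=c'+\lambda'$ with $\lambda,\lambda'\in[0,1)$, one has $|\tilde u\wedge\tilde u'|=|\lambda'L(u')-\lambda L(u'')|$, so the whole quantity collapses to $|u'\wedge u''|=\bigl|\tfrac{|u''|}{|u|}L(u')-\tfrac{|u'|}{|u|}L(u'')\bigr|$, which is cleanest to bound directly in the reduced basis $(L(u),\Hat L(u))$; there is no separate error that needs to be beaten. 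Second, "$c,c'\in(M_\eps,2M_\eps-1)$" is misleading: the two $M_\eps$'s are built from $|L(u')|^2$ and $|L(u'')|^2$ respectively and are generally different (indeed $c'/c\asymp(a'/a)^2$ is exactly the ratio of those $M_\eps$'s, so you do account for this, but the notation obscures it). Third, the asserted sub-case bound $\dist(\Dot u',\Dot u'')\gs\eps^3\diam\Delta(u)/(aa')$ when $a\ne a'$ is too strong in the regime $a\asymp a'$ with $|xa-ya'|$ small: there the coupled identity only forces $|xb-yb'|\gs c/a$, and since this sits in front of the \emph{short} vector $L(u)$ (whose length is smaller than $|\Hat L(u)|$ by roughly $\eps(u)^3$), you only get $\gs\eps^6\diam\Delta(u)/(a(a')^2)$. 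The lemma still follows from this weaker bound since it beats the radii $\ls\eps^9\diam\Delta(u)/a^3$ by a factor $\asymp\eps^{-3}$, so the slip is harmless, but it is a slip. Finally, note that both your argument and the paper's implicitly invoke $\eps(u)^3\gs\eps^3$ (i.e.\ $u\notin Q_{\eps/2}$): without it the Case A estimate only gives $\gs\eps^6\eps(u)^3\diam\Delta(u)/N^3$, and the angle bound in the paper only gives $\sin\theta\gs\eps(u)^3/N^2$ rather than $\eps^3/N^2$. This hypothesis is not in the statement of the lemma but is available in every application (all $u$ produced by the construction lie in $Q'_\eps$), so it is an omission in the paper's statement rather than a flaw in your approach.
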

\begin{proof}
Let $L=L(u)$, $\Hat L=\Hat L(u)$ and $L'=u'\wedge u$.  
Note that by (\ref{ieq:LLv}) we have 
  $$ |L'| \le 2a|\Hat L| \le \frac{4N|u|}{|L|}.$$  
Theorem~\ref{thm:diam} implies $$\diam \Delta(u)\le \frac{4|L|}{|u|^2}$$ and 
  also $\Delta(u')\subset B(\Dot u',2r')$ where $$r'=\frac{|L'|}{|u'|^2} \le 
  \frac{|L'|}{c^2|u|^2} < \frac{|L'|}{M_\eps^2|u|^2} = \frac{\eps^6}{|L'|^3}.$$  
If $(a,b)=(a',b')$ then $$\dist(\Dot u',\Dot u'') 
      = \frac{|u'\wedge u''|}{|u'||u''|} > \frac{20|L'|}{(c+1)^2|u|^2} 
      > \frac{5|L'|}{M_\eps^2|u|^2} = \frac{5\eps^6}{|L'|^3}$$ 
  so that $$\dist(\Delta(u'),\Delta(u'')) \ge \frac{\eps^6}{|L'|^3} \ge 
    \frac{\eps^6|L|^3}{2^6N^3|u|^3} \ge \frac{\eps^9|L|}{2^9N^3|u|^2}$$ 
  giving (\ref{ieq:spacing}) in the case $(a,b)=(a',b')$.  
If $(a,b)\neq(a',b')$, let $L''=u''\wedge u$ and note that 
  $$\sin\angle\pi_u(L')\pi_u(L'')=\frac{|u|}{|L'||L''|} 
     \ge \frac{|L|^2}{16N^2|u|} \ge \frac{\eps^3}{2^7N^2}$$ and 
  $$\dist(\Dot u,\Dot u')=\frac{|u\wedge u'|}{|u||u'|} 
     \ge \frac{|L'|}{2M_\eps|u|^2} = \frac{\eps^3}{2|u||L'|} 
     \ge \frac{\eps^3|L|}{8N|u|^2}$$ so that 
  $$\dist(\Dot u',\Dot u'')\ge\frac{\eps^6|L|}{2^{10}N^3|u|^2}.$$  
Considering the component of $L'$ perpendicular to $L$, as in the 
  proof of Lemma~\ref{lem:Hat:L}, we get 
  $$|L'|\ge \frac{a|u|}{|L|} \ge \frac{N|u|}{|L|}$$ so that 
  $$2r'<\frac{2\eps^6}{|L'|^3} \le \frac{2^7\eps^6|L|^3}{N^3|u|^3} 
       < \frac{2^8\eps^6|L|}{|u|^2}$$ by (\ref{ieq:eps<2}).  
Since $\eps<2^{-7}$, it follows that $$\dist(\Delta(u'),\Delta(u'')) 
  \ge (\frac{1}{2^{10}}-2^9\eps^3)\frac{\eps^6|L|}{N^3|u|^2} 
  \ge \frac{\eps^6}{2^{13}N^3}\diam\Delta(u)$$ 
  which easily implies (\ref{ieq:spacing}).  
\end{proof}

The next proposition completes the proof of Theorem~\ref{thm:main:2}.  
\begin{proposition}\label{prop:DI(2):lb}
There is a constant $c>0$ such that for $0<\delta<2^{-10}$ 
  $$\Hdim \DI_\delta(2)\ge\frac43 + exp(-c\delta^{-4}).$$  
\end{proposition}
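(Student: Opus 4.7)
The strategy is to construct a Cantor-like subset $E\subset\DI_\delta(2)$ by iterating the operation $\psi_\eps$ of Definition~\ref{def:psi}, and to obtain a lower bound on $\Hdim E$ via Theorem~\ref{thm:gen:lower:1}. Fix $\eps:=(\delta/2)^{2/3}$, so that Theorem~\ref{thm:suff}(b) will identify the limit points of $\sigma$-admissible sequences as elements of $\DI_\delta(2)$.

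First I would fix an integer cutoff $N$ (to be optimised below) and, for each $u\in Q'_\eps$, put
\[
\sigma(u):=\bigl\{\psi_\eps(u,a,b,c):1\le a\le N,\ 0\le b\le a,\ \gcd(a,b)=1,\ c\in C(u,a,b)\bigr\},
\]
where $C(u,a,b)$ is a maximal $20$-separated subset of integers in the interval $(M_\eps,2M_\eps-1)$, so $|C(u,a,b)|\asymp M_\eps$. Beginning from any root $u_0\in Q'_\eps$, iteration generates a tree $J$ of admissible sequences $\alpha=(u_0,\dots,u_k)$ with associated compacta $B(\alpha):=\overline{\Delta(u_k)}$. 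Because each child lies in $\cN_\eps(u)\cap Q'_\eps$, Lemma~\ref{lem:nested} yields the inclusion $B(\alpha')\subset B(\alpha)$ and shrinking diameters, while Theorem~\ref{thm:suff}(b) places the limit of every infinite branch inside $\DI_\delta(2)$. Thus $E:=\bigcap_k\bigcup_{\alpha\in J_k}B(\alpha)\subset\DI_\delta(2)$.

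I would then verify the hypotheses of Theorem~\ref{thm:gen:lower:1}. Conditions (i) and (ii) follow from the preceding paragraph. The spacing condition (iii) is exactly Lemma~\ref{lem:spacing} with $\rho=\eps^{9}/(2^{11}N^{3})>0$. For (iv) I would use $u\in Q'_\eps$ (so $|L(u)|^{2}\asymp\eps^{3}|u|$) together with Theorem~\ref{thm:diam} to obtain
\[
\frac{\diam\Delta(\psi_\eps(u,a,b,c))}{\diam\Delta(u)}\asymp\frac{\eps^{9}}{a^{3}},\qquad |C(u,a,b)|\asymp\frac{a^{2}}{\eps^{6}},
\]
so that, with $s=\tfrac{4}{3}+\eta$ and the classical average $\phi(a)\asymp a$,
\[
\sum_{u'\in\sigma(u)}\!\left(\tfrac{\diam\Delta(u')}{\diam\Delta(u)}\right)^{\!s}\asymp\eps^{\,6+9\eta}\sum_{a=1}^{N}a^{-1-3\eta}.
\]
The critical exponent $s=\tfrac{4}{3}$ is precisely where this sum acquires the harmonic tail $\sum a^{-1}$, which is the arithmetic origin of the exponent $\tfrac{4}{3}$ in Theorem~\ref{thm:main}.

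Finally I would balance the two free parameters $N$ and $\eta$. Taking $\log N\asymp\eps^{-6}=\delta^{-4}$ makes $\eps^{6}\log N\gs 1$, so (iv) holds at $s=\tfrac{4}{3}$; pushing $s$ up to $\tfrac43+\eta$, the partial sum $\sum_{a=1}^{N}a^{-1-3\eta}$ retains its harmonic size $\asymp\log N$ only so long as $3\eta\log N\ls 1$, which caps $\eta$ at $\asymp 1/\log N=\exp(-c\delta^{-4})$. Theorem~\ref{thm:gen:lower:1} then delivers $\Hdim\DI_\delta(2)\ge\Hdim E\ge\tfrac{4}{3}+\exp(-c\delta^{-4})$. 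The main obstacle is precisely this tradeoff between $N$ (which must be taken exponentially large in $\delta^{-4}$ in order for the partial harmonic sum to beat $\delta^{-4}$) and $\eta$ (which is then forced down to $\asymp 1/\log N$); a secondary care point is verifying that the implicit constants in the various $\asymp$ relations are uniformly bounded away from $0$ in $u$ and $a$, in particular the tail average of $\phi(a)/a$ and the lower bound on $M_\eps$ when $a$ is small.
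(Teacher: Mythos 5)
Your construction is the same as the paper's: you use the map $\psi_\eps$ of Definition~\ref{def:psi} to build a tree of admissible sequences, restrict the parameter $a$ to $a\le N$ and $c$ to a separated subfamily of the interval $(M_\eps,2M_\eps-1)$, invoke Lemma~\ref{lem:nested} and Theorem~\ref{thm:suff}(b) to land in $\DI_\delta(2)$, use Lemma~\ref{lem:spacing} for the spacing condition, and appeal to Theorem~\ref{thm:gen:lower:1}. (The paper imposes $20\mid c$ instead of a maximal $20$-separated set and uses $\delta=3\eps^{3/2}$ rather than $2\eps^{3/2}$, but these are cosmetic.) Your two ratios $\diam\Delta(\psi_\eps(u,a,b,c))/\diam\Delta(u)\asymp\eps^9/a^3$ and $|C(u,a,b)|\asymp a^2\eps^{-6}$ agree with the paper's computation. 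That said, there are two real problems at the end of your argument.

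First, the phrase ``the classical average $\phi(a)\asymp a$'' cannot be used as a pointwise substitution: $\phi(a)/a$ dips down to $\asymp 1/\log\log a$ infinitely often. What is true, and what you need, is the average-order estimate $\sum_{a\le x}\phi(a)\asymp x^2$; partial summation then gives $\sum_{a\le N}\phi(a)a^{-2-3\eta}\gs\min(\log N,\tfrac1\eta)-O(1)$. This step deserves to be spelled out, because the paper does something different and strictly weaker here: it uses the pointwise bound $\liminf_n\phi(n)\log\log n/n>0$ and then further relaxes $\log\log$ to $\log$, arriving at $\sum_a\phi(a)a^{2-3s}\gs\int_e^N\frac{dx}{x^{3s-3}\log x}\asymp\log\frac{1}{3s-4}$. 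That logarithm (rather than a factor $\frac{1}{3s-4}$) is the entire source of the exponential in the proposition's statement: the paper's condition (iv) becomes $\eps^{9s-6}\log\frac{1}{3s-4}\gs 1$, i.e. $s-\tfrac43\ls\exp(-c\eps^{-6})\asymp\exp(-c\delta^{-4})$.

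Second, your own balancing is internally inconsistent. You correctly extract the constraints $\eps^6\log N\gs 1$ and $3\eta\log N\ls 1$, which force $\log N\asymp\eps^{-6}\asymp\delta^{-4}$ and $\eta\ls 1/\log N\asymp\delta^{4}$; but you then write ``$\eta\asymp 1/\log N=\exp(-c\delta^{-4})$,'' which is simply wrong ($1/\log N\asymp\delta^4$, which is far larger than $\exp(-c\delta^{-4})$). Once you fix both of these points, the argument you have sketched proves the stronger bound $\Hdim\DI_\delta(2)\ge\tfrac43+c\delta^4$, which implies the proposition but is a polynomial rather than exponential improvement on $\tfrac43$. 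You should still verify that the implied constants in your $\asymp$ estimates are uniform over $u\in Q'_\eps$ and that $\eps^{9\eta}\gs 1$ when $\eta\asymp\eps^6$ (it is, since $\eps^6\ll 1/\lvert\log\eps\rvert$), but the mechanism is sound.
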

\begin{proof}
Fix a parameter $N$ to be determined later and set 
  $$\sigma_\eps(u)=\{\psi_\eps(u,a,b,c): a\le N,~20|c\}.$$  
Fix $u_0\in Q$, let $U_0=\{u_0\}$ and recursively define 
  $$U_{k+1}=\bigcup_{u\in U_k} \sigma_\eps(u)$$ where 
  $\eps$ is defined by $\delta=3\eps^{3/2}$.  
Note that $$E_k=\bigcup_{u\in U_k}\overline{\Delta(u)}$$ 
  is a disjoint union, by Lemma~\ref{lem:spacing}.  
We have $E_{k+1}\subset E_k$ by Lemma~\ref{lem:nested}, and 
  by Theorem~\ref{thm:suff}(a), there is a one-to-one 
  correspondence between the points of $E=\cap E_k$ and 
  the sequences $(u_k)$ starting with $u_0$ and satisfying 
  $u_{k+1}\in\sigma_\eps(u_k)$ for all $k$.  
Theorem~\ref{thm:suff}(b) implies $E\subset\DI_\delta(2)$.  
The hypotheses (i)-(iii) of Theorem~\ref{thm:gen:lower:1} 
  now hold with $$\rho=\frac{\eps^9}{2^{11}N^3}.$$  
Before checking (iv), we note that given $1\le a\le N$ 
  we have $\phi(a)$ choices for $b$ such that $a\ge b\ge0$ 
  and $\gcd(a,b)=1$, where $\phi$ is the Euler totient function.  
It is well known that $$\liminf_{n\to\infty}\frac{\phi(n)\log\log n}{n}>0.$$  
Now, for (iv) we compute (assuming $s>\tfrac43$) 
\begin{align*}
  \sum_{u'\in\sigma_\eps(u)}\frac{|L'|^s|u|^{2s}}{|L|^s|u'|^{2s}} 
     &\asymp \sum_{L'}\frac{|L'|^s}{|L|^s}\sum_c\frac{1}{c^{2s}} \\ 
     &\asymp \sum_{L'}\frac{|L'|^s}{|L|^s}\left(\frac{\eps^3|u|}{|L'|^2}\right)^{2s-1} \\
     &\asymp \frac{\eps^{6s-3}|u|^{2s-1}}{|L|^s|\Hat L|^{3s-2}}
             \sum_{a'}\frac{\phi(a')}{(a')^{3s-2}} \\
     &\gs \eps^{9s-6}\int_e^N \frac{dx}{x^{3s-3}\log x}.  
\end{align*}
Note that as $p\to1^+$ 
\begin{align*}
  \int_e^\infty\frac{dx}{x^p\log x} 
      &\asymp \sum_{k\ge1}\int_{e^k}^{e^{k+1}}\frac{dx}{x^pk} 
      = \sum_{k\ge1}\frac{e^{-k(p-1)}}{(p-1)k}(1-e^{-(p-1)}) \\ 
      &= \frac{1-e^{-(p-1)}}{p-1}\log\frac{1}{1-e^{-(p-1)}} 
      \asymp \log\frac{1}{p-1}.  
\end{align*}
Thus, we conclude that there is a constant $C>1$ such that 
  for any $s>\tfrac43$ satisfying 
  $$\eps^{9s-6}\left|\log\left(s-\frac{4}{3}\right)\right| > C$$ 
  the condition (iv) of Theorem~\ref{thm:gen:lower:1} holds 
  by choosing $N$ large enough (depending on $\eps$).  
Since $\delta=3\eps^{3/2}$, the proposition follows.  
\end{proof}

We now describe how to modify the preceding argument to 
  obtain the lower bound in Theorem~\ref{thm:main}.  
Fix a parameter $C>1$ to be determined later and choose 
  positive sequences $\eps_k\to0$ and $N_k\to\infty$ 
  such that for all $k$, $$\eps_k<2^{-7}, N_k\ge1, \quad
  \text{and}\quad \eps_k^6\log\log N_k>C.$$  
We shall also assume the sequences are slowly varying in the 
  sense that the ratio of consecutive terms are bounded above 
  and below by positive constants, say $2$ and $\tfrac12$.  
For example, 
  $$N_k=k+1, \qquad \eps_k=\frac{1}{2^7\log\log\log(k+C')}$$ 
  where $C'>1$ is chosen large enough depending only on $C$.  
The definition of the sets $U_k$ are modified by the formula 
  $$U_{k+1}=\bigcup_{u\in U_k} \sigma_{\eps_{k+1}}(u).$$  
With $E$ defined the same way as before, Theorem~\ref{thm:suff}(c) 
  now implies $E\subset\Sing(2)$.  
For each $u\in U_k$ set $$\rho(u)=\frac{\eps_{k+1}^9}{2^{11}N_k^3}$$  
  so that (i)-(iii) of Theorem~\ref{thm:gen:lower:2} hold.  
The main calculation in the proof of Proposition~\ref{prop:DI(2):lb} 
  with $s=\tfrac43$ now yields 
  $$\sum_{u'\in\sigma_{\eps_{k+1}}(u)}\frac{|L'|^s|u|^{2s}}{|L|^s|u'|^{2s}} 
     \gs \eps_{k+1}^6\log\log N_k$$
  so that (iv) of Theorem~\ref{thm:gen:lower:2} holds 
  provided $C$ was chosen large enough at the beginning.  
It follows that $$\Hdim \Sing(2)\ge\frac{4}{3}$$ 
  and this completes the proof of Theorem~\ref{thm:main}.

\section{Slowly divergent trajectories}\label{S:Starkov}
In this section, we prove 
\begin{theorem}\label{thm:Starkov}
Given any function $W(t)\to-\infty$ as $t\to\infty$ there exists a dense set 
  of $\bx\in\Sing^*(2)$ with the property $W_\bx(t)\ge W(t)$ for all sufficiently 
  large $t$.  
\end{theorem}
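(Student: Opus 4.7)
The plan is to construct $\bx = \lim_k \Dot u_k$ from a sequence $(u_k)$ built via Theorem~\ref{thm:suff}. By part (d) of that theorem, $W_\bx(t)$ agrees with the piecewise linear function $Z(t) := \log \ell(g_t h_\bx \{u_k\})$ up to an error $-\log(1-\eps^6)$ that is negligible once the $\eps(u_k)$ are small. Lemma~\ref{lem:common} locates the local minima of $Z$ at values $\log \eps_\bx(u_{k-1}, u_k)$, which by Theorem~\ref{thm:best:1} (using $L(u_k) = \Z u_k + \Z u_{k-1}$ from Lemma~\ref{lem:nested}) are comparable to $\log \eps(u_k)$.  Since $Z$ has only slopes $+1$ and $-2$, on each interval between successive local minima it stays above the smaller of the two endpoint values; after replacing the target $W$ by its (still divergent) running supremum, the problem reduces to arranging $\log \eps(u_k) \ge W(\tau_k)$ at every time $\tau_k$ of a local minimum of $Z$.

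I then fix a decreasing sequence $\eps_k \downarrow 0$ (say $\eps_k = 1/k$) and build $(u_k)$ recursively using $u_{k+1} = \psi_{\eps_{k+1}}(u_k, a, b, c)$ as in Definition~\ref{def:psi}.  The key flexibility is that $|u_{k+1}| \asymp \eps_{k+1}^{-3} |L'|^2$ for $L' = \Z u_{k+1} + \Z u_k \in \cL^*(u_k)$, and $\cL^*(u_k)$ contains elements of arbitrarily large norm; so I can hold $\eps(u_{k+1}) \asymp \eps_{k+1}$ while making $|u_{k+1}|$, and hence $\tau_{k+1}$ (which grows like $\log |u_{k+1}|$ by the formulas in Lemma~\ref{lem:common}), as large as I like.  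In particular, I push $\tau_{k+1}$ past the threshold beyond which $W(t) \le \log \eps_{k+1}$.  The requirement $L' \neq L(u_k)$ is built into $\cL^*$, so $L(u_{k+1}) \neq L(u_k)$ for every $k$; this prevents $\bx$ from lying on any rational line (by the argument of Lemma~\ref{lem:Hat:inf}), so $\bx \in \Sing^*(2)$ once $\eps_k \to 0$, by Theorem~\ref{thm:suff}(c).  For density, given an open set $U \subset \R^2$ I start from a $u_0 \in Q$ with $\Dot u_0 \in U$ and $|u_0|$ so large that $\diam \Delta(u_0) < \dist(\Dot u_0, \partial U)$; this is possible by (\ref{cor:diam}), and it forces $\Delta(u_0) \subset U$.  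Since $u_0 \in \Sigma(\bx)$ by Theorem~\ref{thm:suff}(a), we obtain $\bx \in \Delta(u_0) \subset U$.

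The hard part is the simultaneous bookkeeping at each inductive step: I must fix the depth $\eps(u_{k+1}) \asymp \eps_{k+1}$, push $\tau_{k+1}$ past a threshold dictated by the decay rate of $W$, and also change the lattice, $L(u_{k+1}) \neq L(u_k)$.  The abundance of choices in $\cN_{\eps_{k+1}}(u_k)$ --- parametrized by triples $(a,b,c)$ with $c$ ranging in a window of width $\asymp M_{\eps_{k+1}}$ --- makes these compatible: one picks $L' \in \cL^*(u_k)$ of norm large enough that the window contains at least one integer $c$, then increases $|L'|$ further (at no cost to the other constraints) to force $|u_{k+1}|$ to be as large as the decay rate of $W$ demands.
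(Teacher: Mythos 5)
There is a genuine gap, and it stems from swapping the local maxima and local minima of $W_\bx$. The crossing time in Lemma~\ref{lem:common}, where $u_{k-1}$ and $u_k$ have common $\|\cdot\|'$-length $\eps_\bx(u_{k-1},u_k)$, is a local \emph{maximum} time of $W_\bx$, not a local minimum (see the ordering $\tau_0<t_0<\tau_1<\cdots$ stated just before Definition~\ref{def:loc:max}). The genuine local minimum occurs later, at the critical time of $u_k$, and its value is substantially lower: it equals $\tfrac13\log|u_k|+\tfrac23\log\hor_\bx(u_k)$, which by Theorem~\ref{thm:best:2} is roughly $\log\eps(u_{k+1})-\tfrac13\log\bigl(|u_{k+1}|/|u_k|\bigr)$. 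In particular it depends on $u_{k+1}$ and becomes \emph{more negative} as $|u_{k+1}|$ grows. Because you track $\log\eps(u_k)$ (the local max value) rather than this quantity, your reduction to checking $\log\eps(u_k)\ge W(\cdot)$ at the critical times does not control the dips.

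This makes the central move --- fixing $\eps_k$ in advance (say $\eps_k=1/k$) and then ``pushing $\tau_{k+1}$ out'' by taking $|u_{k+1}|$ huge --- self-defeating. Holding $\eps(u_{k+1})\asymp\eps_{k+1}$ while letting $|u_{k+1}|\to\infty$ sends $W_\bx$ down at slope $-2$ from $\tau_k$ for an increasingly long stretch; the local minimum value behaves like $-2t$, and for a slowly decaying target such as $W(t)=-\log\log t$ this drops below $W$ inside $(\tau_k,\tau_{k+1})$ no matter where you place $\tau_{k+1}$. The paper's mechanism is the opposite of yours: Lemma~\ref{lem:eps:tau} chooses $u'$ with the \emph{smallest} admissible height (taking $L'=\Hat L(u)$), so the gap $\tau(u')-\tau(u)\approx2|\log\eps'|+|\log\eps(u)|$ is as small as possible; the real work is then done by Lemma~\ref{lem:f}, which tailors a near-flat $f$ to $W$ so that the invariant $|f(\tau(u_k))+\log\eps(u_k)|\le B$ (forcing $\eps(u_k)$ to track $W$'s decay and to vary by only $O(\delta)$ across the unavoidable gap $\approx3|\log\eps(u_k)|$) can be propagated. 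Since your sequence $(\eps_k)$ is fixed independently of $W$, there is no way to recover this matching by adjusting $|u_{k+1}|$ alone.
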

This answers affirmatively a question of A.N.~Starkov \cite{St} concerning the 
  existence of slowly divergent trajectories for the flow on $\SL_3\R/\SL_3\Z$ 
  induced by $g_t$.  

\begin{lemma}\label{lem:f}
Given $\delta>0$ and a function $F(t)\to\infty$ as $t\to\infty$ there exists 
  $t_0>0$ and a monotone function $f(t)\to\infty$ as $t\to\infty$ such that 
\begin{enumerate}
  \item[(i)] $f(t)\le F(t)$ for all $t>t_0$, and 
  \item[(ii)] $f(t+f(t))\le f(t)+\delta$ for all $t>t_0$.  
\end{enumerate}
\end{lemma}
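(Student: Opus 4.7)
The plan is to construct $f$ as a nondecreasing step function which jumps by exactly $\delta$ at a carefully chosen increasing sequence of points $t_0<t_1<t_2<\cdots\to\infty$, setting $f(t)=n\delta$ for $t\in[t_{n-1},t_n)$. The two conditions then translate into explicit inequalities on the spacing of the $t_n$, which can be achieved by induction using the hypothesis $F(t)\to\infty$.

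First, using $F(t)\to\infty$, for each $n\ge 1$ I would pick a threshold $s_n$ such that $F(s)\ge n\delta$ for all $s\ge s_n$. I would then choose $t_n$ inductively to satisfy
$$t_n \;\ge\; \max\bigl(t_{n-1}+n\delta,\; s_n\bigr),$$
starting from any $t_0\ge s_1$. Since the right-hand side forces $t_n-t_{n-1}\ge n\delta$, the sequence diverges to $+\infty$, so the function $f$ defined above is monotone nondecreasing with $f(t)\to\infty$. Condition (i) is immediate: on $[t_{n-1},t_n)$ one has $t\ge t_{n-1}\ge s_n$, hence $F(t)\ge n\delta = f(t)$.

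For condition (ii), I would split into the generic case and the jump points. For $t\in[t_{n-1},t_n)$ one has $f(t)=n\delta$ and
$$t+f(t) \;=\; t+n\delta \;<\; t_n+n\delta \;\le\; t_{n+1},$$
so that $t+f(t)$ lies in $[t_{n-1},t_{n+1})$, where $f$ takes values in $\{n\delta,(n+1)\delta\}$; hence $f(t+f(t))\le (n+1)\delta=f(t)+\delta$. At a jump point $t=t_n$, using the spacing relation twice gives $t_{n+2}\ge t_{n+1}+(n+1)\delta \ge t_n+n\delta+(n+1)\delta > t_n+(n+1)\delta$, so $t_n+f(t_n)\in[t_{n-1},t_{n+2})$, on which $f\le(n+2)\delta$, yielding $f(t_n+f(t_n))\le f(t_n)+\delta$.

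The argument is elementary and I do not anticipate a serious obstacle; the only subtlety worth stating cleanly is the verification at jump points, which is why the inductive spacing bound was chosen to be $n\delta$ (rather than, say, a constant), so that one has a cushion of at least one extra $\delta$-step to absorb the value $f(t_n)=(n+1)\delta$ at the discontinuity.
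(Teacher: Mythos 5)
Your approach --- a nondecreasing step function with jumps of size $\delta$ and plateau lengths growing linearly --- is in the same spirit as the paper's, which first replaces $F$ by a nondecreasing minorant and then sets $t_k=t_{k-1}+y_{k-1}$ with adaptive step heights $y_k=\min\bigl(F(t_k),y_{k-1}+\delta\bigr)$. However, there is an off-by-one error in your verification of (i). Your construction guarantees $t_0\ge s_1$ and $t_n\ge s_n$ for $n\ge1$, hence $t_{n-1}\ge s_{n-1}$ for $n\ge2$; but the check on $[t_{n-1},t_n)$ invokes $t_{n-1}\ge s_n$, which does not follow, since the thresholds $s_n$ are (naturally chosen to be) increasing. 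The fix is a one-symbol shift: require $t_n\ge\max\bigl(t_{n-1}+n\delta,\,s_{n+1}\bigr)$, or equivalently redefine $s_n$ by $F(s)\ge(n+1)\delta$ for all $s\ge s_n$.

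One further remark: the separate ``jump point'' analysis is redundant. With the half-open convention $f=n\delta$ on $[t_{n-1},t_n)$, the point $t=t_n$ lies in $[t_n,t_{n+1})$ and is already covered by the generic case with $n$ replaced by $n+1$; the extra ``cushion'' argument adds nothing. The paper sidesteps both issues by using exact spacings $t_{k+1}=t_k+y_k$, which places $t+f(t)$ in exactly $[t_{k+1},t_{k+2})$ and makes (i) automatic from $y_k\le F(t_k)\le F(t)$. Your version, once the index is corrected, is a valid and slightly more explicit alternative.
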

\begin{proof}
We may reduce to the case where $F(t)$ is a nondecreasing function.  
Let $t_0$ be large enough so that $y_0=F(t_0)>0$ and for $k>0$ set 
  $t_k=t_{k-1}+y_{k-1}$ and $y_k=\min\left(F(t_k),y_{k-1}+\delta\right)$.  
Since $y_k\ge y_0>0 \forall k$ we have $t_k\to\infty$ and therefore 
  also $y_k\to\infty$.  
Let $f(t)=y_k$ for $t_k\le t<t_{k+1}$ so that $f(t)=y_k\le F(t_k)\le F(t)$ 
  since $t_k\le t$ and $F(t)$ is nondecreasing.  
Moreover, $t_{k+1}=t_k+y_k\le t+f(t)<t_{k+1}+y_k\le t_{k+1}+y_{k+1}=t_{k+2}$ 
  so that $f(t+f(t))=y_{k+1}\le y_k+\delta = f(t) + \delta$.  
\end{proof}

\begin{definition}\label{def:tau(v)}
For any $v\in Q$, let 
  $$\tau(v):=-\frac{1}{3}\log\frac{|L(v)|}{|v|^2}=-\frac{1}{2}\log\frac{\eps(v)}{|v|}.$$  
\end{definition}

\begin{lemma}\label{lem:eps:tau}
There exists $C>0$ such that for any $0<\eps'<1$ and any $u\in Q$, 
  there exists $u'\in\cN_{\eps'}(u)$ such that 
\begin{enumerate}
  \item[(i)] $~|\log \eps(u')-\log \eps'|\le C$, and 
  \item[(ii)] $\big|\tau(u')-\tau(u)-2|\log\eps'|-|\log\eps(u)|\big|\le C$.  
\end{enumerate}
\end{lemma}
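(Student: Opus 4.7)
The plan is to construct $u'$ explicitly as $\psi_{\eps'}(u,a,b,c)$ from Definition~\ref{def:psi}, with universally bounded $(a,b)$ and with $c$ the least integer exceeding $M_{\eps'}$. The remarks after that definition then give $u'\in\cN_{\eps'}(u)\cap Q'_{\eps'}$ for free, so $\eps(u')\in[\eps'/2,\eps')$ and (i) holds with any $C\ge\log 2$.

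The first thing to check is that a bounded $(a,b)$ can be chosen so that the admissible range $(M_{\eps'},2M_{\eps'}-1)$ for $c$ contains an integer, equivalently $M_{\eps'}>2$. Writing $L'=a\Hat L(u)+bL(u)$, the lower bound $|\Hat L(u)|\ge|u|/|L(u)|$ from the proof of Lemma~\ref{lem:Hat:L} (which uses no hypothesis on $\eps(u)$) gives $|L'|^2/|u|\ge a^2/\eps(u)^3$. Since $\eps(u)^3<2$ by (\ref{ieq:eps<2}) and $\eps'<1$, the choice $(a,b)=(2,1)$ always forces
\begin{equation*}
M_{\eps'}=\eps'^{-3}\frac{|L'|^2}{|u|}\ge\frac{4}{\eps(u)^3\eps'^3}>2,
\end{equation*}
so $c=\lceil M_{\eps'}\rceil$ lies in the admissible range and satisfies $c=M_{\eps'}+O(1)$.

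For (ii), I will substitute the asymptotics $|u'|\asymp c|u|\asymp\eps'^{-3}|L'|^2$ and $\eps(u')\asymp\eps'$ into $\tau(v)=\tfrac12\log|v|-\tfrac12\log\eps(v)$, reducing the calculation to control of $\log|L'|$. The full bound $|u|/|L(u)|\le|\Hat L(u)|\le|u|/|L(u)|+|L(u)|$, combined with $|L(u)|=\eps(u)^{3/2}|u|^{1/2}$, the bounded range of $\eps(u)$, and the boundedness of $a$, yields $\log|L'|=\tfrac12\log|u|-\tfrac32\log\eps(u)+O(1)$, whence
\begin{equation*}
\tau(u')-\tau(u)=-2\log\eps'-\log\eps(u)+O(1).
\end{equation*}

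The one mildly delicate point (and the main obstacle, if there is one) is reconciling this expression with $2|\log\eps'|+|\log\eps(u)|$. Because $\eps'<1$ we have $2|\log\eps'|=-2\log\eps'$, and the two expressions agree identically when $\eps(u)\le1$; in the remaining regime $\eps(u)\in(1,2^{1/3})$ the sign of $\log\eps(u)$ flips, introducing an error of at most $2|\log\eps(u)|\le\tfrac{2\log2}{3}$, which is absorbed into $C$. All other steps are direct substitutions using the lattice bookkeeping of Section~\ref{S:best:approx}.
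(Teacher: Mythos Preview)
Your argument is correct and follows essentially the same route as the paper. The paper constructs $u'$ directly by taking $L'=\Hat L(u)$ and choosing the element of $L'$ with least height exceeding $(\eps')^{-3}|L'|^2$; it then shows $|u'|\asymp(\eps')^{-3}|L'|^2$ (using $|L'|^2\ge|u|/\eps(u)^3>|u|/2$) to deduce $\eps(u')\asymp\eps'$, and carries out the same $\tau$-computation you do. Your variant, routing through $\psi_{\eps'}(u,2,1,c)$, is a mild repackaging: taking $(a,b)=(2,1)$ instead of $(1,0)$ forces $M_{\eps'}>2$ so that the $\psi$-construction is well defined, and buys you the clean containment $u'\in Q'_{\eps'}$ (hence $\eps(u')\in[\eps'/2,\eps')$) for free from the remarks after Definition~\ref{def:psi}, while the paper establishes the analogous $\eps(u')\asymp\eps'$ by hand. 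The computations for (ii) are identical in spirit; your explicit handling of the sign issue when $\eps(u)\in(1,2^{1/3})$ is a nice touch that the paper leaves implicit in the $O(1)$.
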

\begin{proof}
Let $L'=\Hat L(u)$ and let $u'\in Q$ be determined by $L'=u\wedge u'$ and 
  $$|u'|>(\eps')^{-3}|L'|^2 \ge |u'|-|u|.$$  
Then $u'\in\cN_{\eps'}(u)$ by the first inequality.  
By Lemma~\ref{lem:Hat:L} and (\ref{ieq:eps<2}) 
  $$|u'| > |L'|^2 \ge \frac{|u|^2}{|L(u)|^2} > 2|u|$$ 
  so that $\eps(u')^3\asymp\frac{|L'|^2}{|u'|}\asymp(\eps')^3$, giving (i).  
Since 
\begin{align*}
  \tau(u')-\tau(u) &= \frac{1}{2}\log\frac{|u'|}{|u|} 
                      - \frac{1}{2}\log\frac{\eps'}{\eps(u)} + O(1) \\
                   &= 2|\log \eps'| + |\log \eps(u)| + O(1)
\end{align*}
  (ii) follows.  
\end{proof}

\begin{proof}[Proof of Theorem~\ref{thm:Starkov}]
Let $\Tilde f$ be the function obtained by applying Lemma~\ref{lem:f} with 
  $F=-W(t)$ and some given $\delta>0$ to be determined later.  
Set $f=3^{-1}\Tilde f$ and note that $f$ satisfies 
\begin{enumerate}
  \item[(i)] $3f(t)\le-W(t)$ for all $t>t_0$, and 
  \item[(ii)] $f(t+3f(t))\le f(t)+\delta$ for all $t>t_0$ 
\end{enumerate}
  and since $f(t)\to\infty$, given any $A>0$ we can choose $t_0$, perhaps 
  even larger, so that, in addition to (i) and (ii), $f$ also satisfies 
\begin{enumerate}
  \item[(iii)] $f(t+3f(t)+A)\le f(t)+2\delta$ for all $t>t_0$.  
\end{enumerate}
We claim there is a constant $B$ such that for any $u\in Q_1$ satisfying 
\begin{equation}\label{ieq:u}
  |f(\tau(u)) + \log \eps(u)| \le B
\end{equation}
  and such that $|u|$ larger than some constant depending only on $f$ 
  there exists $u'\in\Hat\sigma_1(u)$ such that 
  $$|f(\tau(u')) + \log \eps(u')| \le B.$$  
Indeed, given $u$ satisfying (\ref{ieq:u}), we let $u'$ be obtained by applying 
  Lemma~\ref{lem:eps:tau} with $\eps'<1$ determined by 
  $$|\log\eps'| = f\big(\tau(u)+|\log\eps(u)|\big).$$  
Then, if $A\ge3B$ we have 
\begin{align*}
  |\log\eps'| &\le f(\tau(u)) + 3f(\tau(u)) + 3B \\
              &\le f(\tau(u)) + 2\delta \\
              &\le |\log\eps(u)| + B + 2\delta.  
\end{align*}
By Lemma~\ref{lem:eps:tau}, 
\begin{align*}
  \tau(u') &\le \tau(u) + 2|\log\eps'| + |\log\eps(u)| + C \\
           &\le \tau(u) + 3|\log\eps(u')| + 2B + C + 4\delta \\
           &\le \tau(u) + 3f(\tau(u)) + 5B + C + 4\delta 
\end{align*}
  so that if $A\ge 5B + C + 4\delta$ we have 
\begin{align*}
  f(\tau(u')) &\le f(\tau(u)) + 2\delta \\
              &\le |\log\eps'| + 2\delta \\
              &\le |\log\eps(u')| + C + 2\delta.  
\end{align*}
Now, $$|\log\eps'| \ge f(\tau(u)) \ge |\log\eps(u)| - B$$ so that 
  $$\tau(u') \ge \tau(u) + 3|\log\eps(u)| - 2B - C.$$  
Assuming $|u|$ large enough so that $3|\log\eps(u)|\ge 2B+C$ we have 
\begin{align*}
  f(\tau(u')) &\ge f(\tau(u)) \\
              &\ge f\big(\tau(u) + 3f(\tau(u)) + A\big) - 2\delta \\
              &\ge f\big(\tau(u) + 3|\log\eps(u)| + A - 3B\big) - 2\delta \\
              &\ge |\log\eps'| - 2\delta \\
              &\ge |\log\eps(u')| - C - 2\delta.  
\end{align*}
Setting $A=6C+14\delta$, we see that the claim follows with $B=C+2\delta$.  

Given any nonempty open set $U\subset\R^2$, we can choose $u_0\in Q$ 
  such that $\Delta(u_0)\subset U$.  
Indeed, choose any $\bx_0\in U\setminus\Q^2$ and let $u_0\in\Sigma(\bx_0)$ 
  be such that $|u_0|$ is large enough so that $\Delta(u_0)\subset U$.  
Let $\delta$ be chosen large enough at the beginning so that (\ref{ieq:u}) 
  holds for $u=u_0$.  
Let $\Sigma_0=(u_k)$ be a sequence constructed by recursive definition using 
  the claim.  
Since 
\begin{equation}\label{ieq:tau}
  \tau(u_{k+1}) = \tau(u_k) + 3|\log\eps(u_k)| + O(1)
\end{equation}
  and $\eps(u_k)\asymp \exp(-f(\tau(u_k)))$ by construction, by choosing 
  $|u_0|$ large enough initially we can ensure that $\eps(u_k)<\tfrac13$ 
  for all $k$ so that $\tau(u_k)$ increases to infinity as $k\to\infty$.  
Since $f(t)\to\infty$ as $t\to\infty$, this implies $\eps(u_k)\to0$ as 
  $k\to\infty$.  
By construction, $u_{k+1}\in\cN_{\eps_k}(u_k)$ so that Theorem~\ref{thm:suff}(c)  
  implies $$\bx:=\lim_k\Dot u_k\in\Sing(2).$$  
If $\bx$ lies on a rational line, then $W_\bx(t)\le-\frac{t}{2}+C$ for 
  some constant $C$ and all large enough $t$.  
It is clear that we could have, at the start, reduced to the case where, 
  say, $W(t)>-\log t$ for all $t$, so that $\bx\in\Sing^*(2).$  

Let $D=|\log(1-3^{-6})|$.  
Theorem~\ref{thm:suff}(d) implies for all $t\in[\tau(u_k),\tau(u_{k+1})]$ 
\begin{align*}
  -W_\bx(t) &\le 3|\log\eps(u_k)| + D \\
            &\le 3f(\tau(u_k)) + 3B + D\\
            &\le -W(\tau(u_k)) + 3B + D\\
            &\le -W(t) + 3B + D.  
\end{align*}
It is clear that we could have chosen $f$ initially to satisfy 
  (i') $3f(t)\le -W(t) - 3 B - D$ for all $t>t_0$ instead of (i).  
With this choice, we conclude $W_\bx(t)\ge W(t)$ for all $t>t_0$.  
\end{proof}

\end{document}